\newtheorem{theorem}{Theorem}[section]
\newtheorem{lemma}[theorem]{Lemma}
\newtheorem{corollary}[theorem]{Corollary}
\pgfplotsset{width=10cm,compat=1.9}
\providecommand{\keywords}[1]
{
  \small	
  \textbf{\textit{Keywords:}} #1
}
\providecommand{\akn}[1]
{
  \small	
  \textbf{\textit{Acknowledgments:}} #1
}
\title{Stability of $N$-front and $N$-back solutions in the Barkley model}
\author{Christian Kuehn\thanks{Prof. Dr.; Department of Mathematics, Technical University of Munich, Boltzmannstr. 3, 85748 Garching b. München, Germany; ckuehn@ma.tum.de} \and Pascal Sedlmeier\thanks{M.Sc.; Department of Mathematics, Technical University of Munich, Boltzmannstr. 3, 85748 Garching b. München, Germany; sedlmeier.pascal@gmail.com}} 
\date{\today}
\begin{document}
\maketitle
    \begin{abstract}
       In this paper we establish for an intermediate Reynolds number domain the stability of $N$-front and $N$-back solutions for each $N>1$ corresponding to traveling waves, in an experimentally validated model for the transition to turbulence in pipe flow proposed in \textit{[Barkley et al., Nature 526(7574):550-553, 2015]}. We base our work on the existence analysis of a heteroclinic loop between a turbulent and a laminar equilibrium proved by Engel, Kuehn and de Rijk in \textit{[Engel, Kuehn, de Rijk, Nonlinearity 35:5903, 2022]}, as well as some results from this work. The stability proof follows the verification of a set of abstract stability hypotheses stated by Sandstede in \textit{[SIAM Journal on Mathematical Analysis 29.1 (1998), pp. 183–207]} for traveling waves motivated by the FitzHugh-Nagumo equations. In particular, this completes the first detailed analysis of Engel, Kuehn and de Rijk in \textit{[Engel, Kuehn, de Rijk, Nonlinearity 35:5903, 2022]} leading to a complete existence and stability statement that nicely fits within the abstract framework of waves generated by twisted heteroclinic loops. 
    \end{abstract}
    \keywords{Barkley model, stability, $N$-front and $N$-back solutions, pipe flow, Reynolds number, traveling waves, turbulence, heteroclinic loop, reaction-diffusion-advection system.}

\pagenumbering{Roman}
\normalsize

\leftskip=0cm
\mbox{}
\pagenumbering{arabic}
\section{Introduction}
The problem of the transition from laminar to turbulent flow is an incredibly complex theoretical challenge, which is in practice extremely relevant. The underlying Navier-Stokes equations \cite{bookNVS} are highly involved with many elementary questions still open, e.g., we do not even have uniqueness of weak solutions to Navier-Stokes as proved in \cite{Non-uniquenessNVS}. Even in a simple pipe flow, the transition mechanisms from laminar to turbulent flow are not fully understood. \\

When the viscous forces dominate, they are sufficient to keep all particles of the fluid in line, corresponding to laminar flow. In contrast, when the inertial forces dominate over the viscous forces, the flow becomes turbulent (cf. \cite{Enc}). The flow is then characterized by \textcolor{black}{the} following properties:  irregularity, diffusivity, rotationality and dissipativity. \textcolor{black}{We refer to \cite{1988109}, \cite{SENGUPTA2013440} for the meaning of these terms in turbulence dynamics}. To determine whether a flow condition will be laminar or turbulent it is useful to compute the \textit{Reynolds number}, even if there does not exist a sharp boundary but a whole domain over which the transition from laminar to turbulent flow occurs. \textcolor{black}{For pipe flow, the presence of such a wide transitional range is extremely difficult to understand directly from the Navier-Stokes equations.} For example, it has been conjectured that a chaotic saddle generated by a boundary crisis \cite{Eckhardt} plays an important role. Yet, such a conjecture -- as well as other possible explanations -- \textcolor{black}{has not been mathematically rigorously proven so far} for Navier-Stokes. Recall that the Reynolds number $Re$ corresponds to the ratio of inertial forces to viscous forces and is defined for pipe flow as 
\begin{equation}
    Re:=\frac{UD}{\nu},
\end{equation}
where $U$ is the mean velocity $(m/s)$, $D$ the pipe diameter $(m)$ and $\nu$ the kinematic viscosity of the fluid $(m^2/s)$. Typically, fully turbulent flow \textcolor{black}{occurs in the region above} $Re \approx 3500$ \cite{TUMGallerkin}.\\

For intermediate Reynolds number regimes corresponding to a transitional behaviour, localized turbulence in \textcolor{black}{the} form of turbulent puffs competes with laminar background flow (cf.~\cite{Wygnanski1973OnTI}, \cite{Wygnanski1973OnTI2}). \textcolor{black}{Therefore, one possible viewpoint is to see the flow as a bistable system where turbulent and laminar flow are modelled as steady states}. The transition to fully turbulent pipe flow occurring at higher velocities is then explained by a bifurcation scenario (cf. \cite{Barkley_2015}). This viewpoint motivated the development of simpler models to capture the transition to turbulence.
 
\section{The Barkley model for pipe flow}
\label{Section 1}

To study the dynamical mechanisms and turbulence features of pipe flow, it seems natural to consider models of lower complexity than Navier-Stokes, but which still capture many interesting dynamical spatio-temporal properties. In this paper, we consider the well-established and experimentally verified model proposed by Barkley et al. (cf. \cite{Barkley_2015}):
\begin{equation}
\begin{split}
        q_t&=Dq_{xx}+(\zeta-u)q_x+f(q,u;r), \\ 
        u_t&=-uu_x+\epsilon g(q,u),
\end{split}
\label{primary}
\end{equation}
which describes pipe flow quite accurately.
The variables $q=q(x,t)$ and $u=u(x,t)$ depend only on the coordinate $x \in \mathbb{R}$ along the direction of the fluid \textcolor{black}{flow} (called the \textit{stream-wise coordinate)} and time $t\geq 0$. The variable $q$ represents the turbulence level, which can be seen as the integral of the turbulent fluctuations over a cross-section through the pipe, and $u$ the fluid centerline velocity. The functions $f=f(q,u;r)$ and $g=g(q,u)$ are the so-called \textit{reaction terms} and are given by:
\begin{equation}
    f(q,u;r)=q\big(r+u-2-(r+0.1)(q-1)\big)^2
\end{equation}
and
\begin{equation}
    g(q,u)=2-u+2q(1-u).
\end{equation}
The parameters of the dynamical system are given by $r,\,D,\,\zeta$ and $\epsilon$. The parameter $r>0$ models the Reynolds number, $D>0$ regulates the coupling of the turbulent puffs to the laminar flow (via diffusion) and $\zeta>0$ takes into consideration the fact that the time-scale of turbulent advection is slower than the time-scale of the centerline velocity. The parameter $\epsilon>0$ is taken small and controls the time-scale ratio between rapid excursions of $q$ to slow recovery of $u$ posterior to relaminarization. The Barkley model \eqref{primary} is hence nothing else than a system of reaction-diffusion equations with advective non-linearity. There seems to be a deep intrinsic connection between transition to (fully) turbulent pipe flow and traveling waves. This fact is strongly supported by experiments and numerical simulations of Navier-Stokes as described in \cite{EKR2022}. Hence, being interested in traveling waves, i.e. solutions to \eqref{primary} of the form
\begin{equation}
    \big(q(x,t),u(x,t)\big)=\big(\widetilde{q}(x-ct), \widetilde{u}(x-ct)\big)
    \label{formtilde}
\end{equation}
spreading with constant velocity $c\in\mathbb{R}$ without changing their profile, we introduce new variables $(\xi,t):=(x-ct,t)$ in which \eqref{primary} takes the form:
\begin{equation}
    \begin{split}
        q_t&=Dq_{\xi\xi}+cq_\xi+(\zeta-u)q_\xi+f(q,u;r),\\ 
    u_t&=-uu_\xi+cu_\xi+\epsilon g(q,u).
    \label{form}
    \end{split}
\end{equation}
The corresponding steady-state equation is then given by the three-dimensional system of ordinary differential equations (ODEs):
\begin{equation}
\begin{split}
\dot{q}&=s, \\
\dot{s}&=-\frac{c}{D}s-\frac{\zeta-u}{D}s-\frac{1}{D}f(q,u;r)=\frac{1}{D}\big((u+\mu)s-f(q,u;r)\big), \\
\dot{u}&=\frac{\epsilon}{u-c}g(q,u),
\label{eqdiff}
\end{split}
\end{equation}
with $\mu=-(\zeta+c)$ and the dot meaning $\frac{\partial}{\partial\xi}$. From the definition of $\zeta$, the new variable $\mu$ hence corresponds to the difference of advection between turbulence and the centerline velocity, relative to the speed $c$ of the traveling wave. The existence of a wide variety of traveling waves has been established via geometric perturbation theory in the previous work~\cite{EKR2022}, to which we refer the reader as a background. Here our main concern is stability of the traveling waves and we limit ourselves to the analysis of the Barkley model; for a broader viewpoint as well as for the origins of this paper, we refer the reader to \cite{Master_s_Thesis_Pascal_Sedlmeier}. 

\section{Proof and main results}
\label{proofbark}

To achieve the stability result, and to connect the Barkley model to other important classes of reaction-diffusion equations, we are going to prove in this section \textcolor{black}{the following: The} two theorems by Sandstede \cite[Thm.~1 and Thm.~2]{Sandstede98} for traveling waves arising in the FitzHugh-Nagumo equations also hold for traveling waves in the Barkley model \eqref{primary} for
\begin{enumerate}
    \item intermediate Reynolds number regimes $r\in(\frac{2}{3},\beta)$, where $\beta\approx 0.72946$;
    \item $\epsilon>0$ taken sufficiently small;
    \item $c<u_b(r)$, where $u_b(r)$ is the $u$-component of the equilibrium $X_2$ (cf. Section 3.1). This condition arises by requiring that the flow is directed towards the steady states $X_1$ and $X_2$ on the slow orbit segments, and is the same as in the work \cite{EKR2022} of Engel, Kuehn and de Rijk for existence of the waves.
\end{enumerate} 
In particular, with this strategy we will conclude existence and stability of $N$-fronts and $N$-backs for each $N>1$. \textcolor{black}{We will call an $N$-front a concatenation of $2N+1$ simple fronts and backs connecting between two asymptotic (steady) states; we also refer to \cite{EKR2022}, \cite{Master_s_Thesis_Pascal_Sedlmeier}, \cite{Sandstede98} for terminology and graphical representation of the profiles of various traveling waves as well as to Figure~\ref{fig: Layertime}. $N$-backs are defined analogously just with the order of the asymptotic steady states reversed.} Our proof proceeds by checking a certain number of hypotheses from Sandstede's work \cite{Sandstede98}, which are labelled there (H0)-(H7) and we follow this labeling here as well. Checking these hypotheses is then going to establish that the calculations in \cite{EKR2022} can be upgraded. \textcolor{black}{The main technical difficulties appear in Section \ref{H3} and Section \ref{H7}, where we have to (a) employ the strong $\lambda$-lemma, (b) justify the asymptotics of fronts/backs carefully, and (c) verify that certain Melnikov-type integrals are non-zero. In a broader perspective, a key difficulty was to match and upgrade certain concrete calculations carried out for existence analysis and utilize with a much broader existence/stability framework.} \textcolor{black}{In the earlier work \cite{EKR2022}, we have only shown \emph{existence} of a wide variety of traveling waves. In this current work, we are also above to cover beyond existence also \emph{stability} of $N$-front and $N$-back waves for the Barkley model.} Therefore, this current work should be viewed as a 'completion' to the existence analysis in~\cite{EKR2022} \textcolor{black}{as for any pattern-forming problem in PDEs one usually would like to have not only the existence of a pattern but also check when it is (locally asymptotically) stable.} 

\subsection{\textcolor{black}{Summary of results and definitions from \texorpdfstring{\cite{EKR2022}}{Lg} used in this paper}}
\label{summary results EKR22}
\begin{enumerate}
    \item \label{R1} \textbf{(R1)} For all values of the model Reynolds number $r$, the dynamical system \eqref{eqdiff} exhibits the equilibrium $X_1=(0,0,2)$, which corresponds to parabolic laminar flow in the pipe flow model \eqref{primary}. At this point, the turbulence level vanishes ($q=0$) and the centerline velocity takes the constant value $u=2$. \\ 
    
    In the regime $r>\frac{2}{3}$, \eqref{eqdiff} has a second equilibrium point, which is given by $X_2=(q_{b,+}(r),0,u_b(r))$ and corresponds to a turbulent steady state in the pipe flow model \eqref{primary}. The turbulence level is such that $q=q_{b,+}(r)>1$ and the centerline velocity takes the value $u=u_b(r) \in (\frac{6}{5}, \frac{4}{3})$ with: \begin{equation}
        \lim_{r \longrightarrow \frac{2}{3}}u_b(r)=\frac{4}{3}, \,\,\,\,\,\,\,\lim_{r \longrightarrow +\infty}u_b(r)=\frac{6}{5}
    \end{equation}
    \begin{equation}
        q_{b,+}(r)=1+\sqrt{\frac{r+u_b(r)-2}{r+0.1}}
    \label{eqb+}
    \end{equation}
\textcolor{black}{(from Lemma 3.1 \cite{EKR2022})}. 

\textcolor{black}{In the} \textit{slow subsystem}
\begin{equation}
\begin{split}
0 &= s, \\
  0 &= \frac{1}{D}\big((u+\mu)s-f(q,u;r)\big), \\
  (u-c)u_{\widehat{\xi}} &= g(q,u).
\end{split}
\label{slowsubsys}
\end{equation}

\textcolor{black}{with} $\widehat{\xi}=\epsilon \xi$ the stretched spatial coordinate, \textcolor{black}{the orbits are located on the nullcline
\begin{equation}
M_0:=\{(q,0,u)\in \mathbb{R}^3: f(q,u;r)=0\}
\end{equation}
called \textit{critial manifold}. We have $M_0 = M_1 \cup M_2$ with $M_1$ the line and $M_2$ the parabola:
\begin{equation}
    M_1:=\{(0,0,u) : u \in \mathbb{R}\},\,\,\,\,\,\,\, M_2:=\{(q,0,2-r+(r+0.1)(q-1)^2): q \in \mathbb{R}\},
    \label{M1M2}
\end{equation}
cf. Figure \ref{fig: Dynamicsslowsubsyst}. The equilibria of the slow subsystem \eqref{slowsubsys} and of the traveling wave equation \eqref{eqdiff} with $\epsilon>0$ are located at the intersections of $M_0$ with the second nullcline, namely the hyperbola:
\begin{equation}
    H_0:=\{(q,0,u) \in \mathbb{R}^3: g(q,u)=0\}
\end{equation}}

\begin{figure}[H]
    \centering
    \includegraphics[width=7cm]{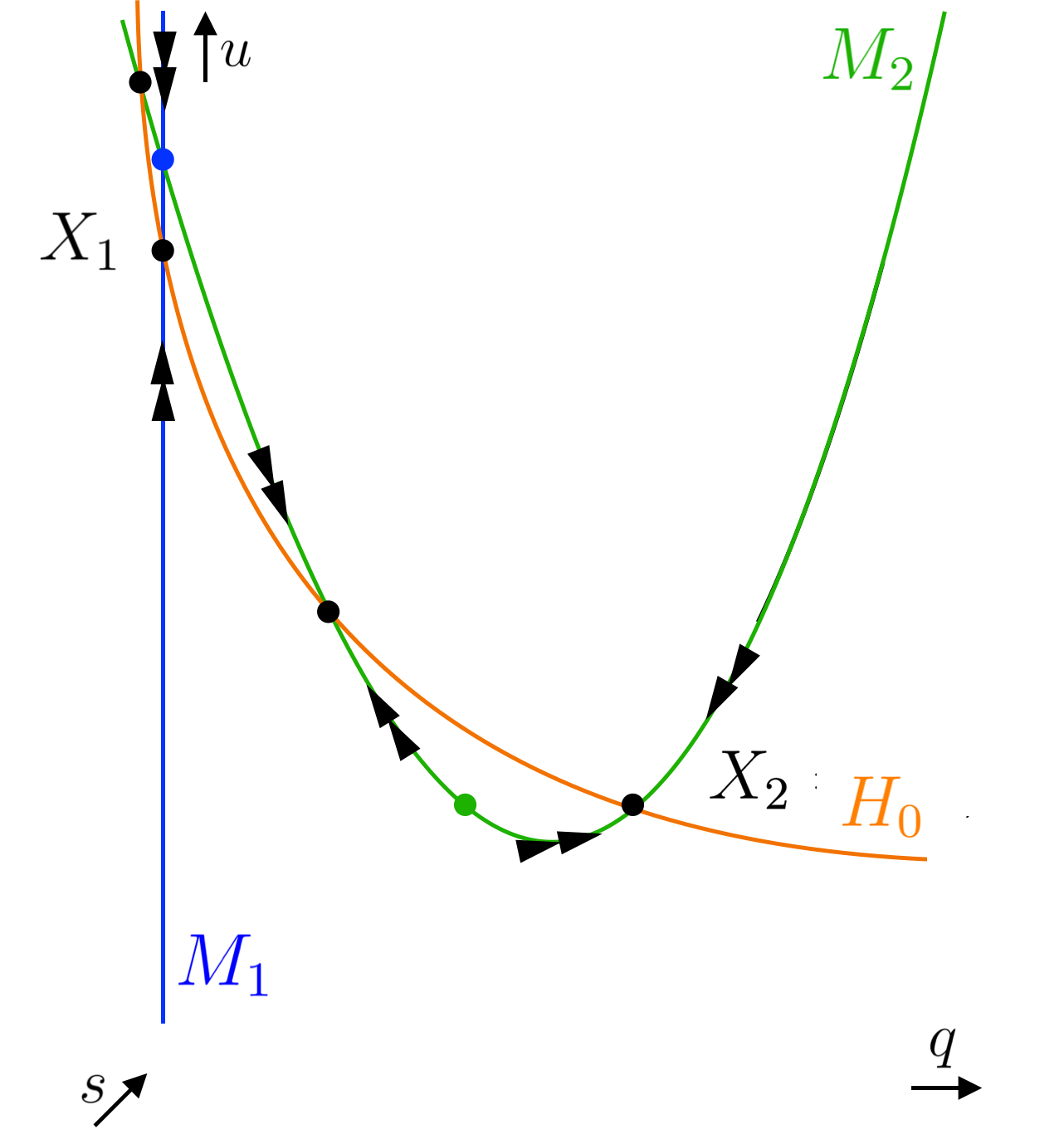}\\[3mm] 
    \caption{\textcolor{black}{Dynamics of the slow subsystem \eqref{slowsubsys} in the $(q,s,u)$-frame with $s=0$ in the regime $r>\frac{2}{3}$ and $c<u_b(r)$. Equilibria show up at the intersections of the nullclines $M_0$ and $H_0$, and $M_2$ attains its minimum at the point $(1,0,2-r)$.}}
    \label{fig: Dynamicsslowsubsyst}
\end{figure}

Setting $\epsilon=0$ in \eqref{eqdiff} yields the \textit{fast subsystem}:
\begin{equation}
    \begin{split}
    q_\xi &= s, \\
    s_\xi &= \frac{1}{D}\big((u+\mu)s-  f(q,u;r)\big), \\
    u_\xi &= 0.
    \end{split}
    \label{fastss}
    \end{equation}
    which admits:
\begin{enumerate}
    \item \textcolor{black}{In the layer $u=2$: the 2 additional equilibria $(q_{f,\pm}(r),0,2)$ located on the left and right branch of the parabola $M_2$ with:}
    \begin{equation}
        q_{f,\pm}(r)=1\pm \sqrt{\frac{r}{r+0.1}}
    \end{equation}
    \item \textcolor{black}{In the layer $u=u_b(r)$: the 2 additional equilibria $(0,0,u_b(r))$ on the line $M_1$ and $(q_{b,-}(r),0,u_b(r))$ on the left branch of the parabola $M_2$ with:}
    \begin{equation}
        q_{b,-}(r)=1-\sqrt{\frac{r+u_b(r)-2}{r+0.1}}
    \end{equation}
    \textcolor{black}{(from equations (3.7), (3.8) \cite{EKR2022})}. 
\end{enumerate}

The steady states $(q,u)=(0,2)$ and $(q,u)=(q_{b,+}(r),u_b(r))$ in \eqref{primary} are stable for $r>\frac{2}{3}$, hence \eqref{primary} is bistable for such regimes. The stability can be easily checked by writing the linearization about $X_1$ and $X_2$ of \eqref{primary} and computing the spectra.

    \item \label{L3.2} \textbf{(R2)} For $r>\frac{2}{3}$ the equilibria $X_1$ and $X_2$ in the slow subsystem \eqref{slowsubsys} are sinks if and only if $c<u_b(r)$. A sufficient condition is given by $c< \max\{2-r, \frac{6}{5} \}$. \textcolor{black}{(from Lemma 3.2 \cite{EKR2022})}

    \item \label{L3.3} \textbf{(R3)} The steady states $X_1$ and $X_2$ are hyperbolic saddles in the fast subsystem \eqref{fastss} within the layers $u=2$ (for $X_1$) and $u=u_b(r)$ (for $X_2$) in the regime $r>\frac{2}{3}$. \textcolor{black}{(from Lemma 3.3 \cite{EKR2022})}

    \item \textbf{(R4)} \textcolor{black}{We denote by $W_{\boldsymbol{\alpha}}^s(X_k)_{k=1,2}$ and $W_{\boldsymbol{\alpha}}^u(X_k)_{k=1,2}$ the \textit{stable} and \textit{unstable manifold} of the equilibrium $X_k$ for fixed parameter values $\alpha:=(D,\mu,\epsilon)$. $W_{\boldsymbol{\alpha}}^s(X_k)_{k=1,2}$ (resp. $W_{\boldsymbol{\alpha}}^u(X_k)_{k=1,2}$) is the union of all orbits in \eqref{eqdiff} which converge to $X_k$ as $\xi \longrightarrow +\infty$ (resp. $ -\infty$). Traveling waves in the pipe flow model \eqref{primary} are related to orbits in $W_{\boldsymbol{\alpha}}^s(X_1)$ and 
    $W_{\boldsymbol{\alpha}}^u(X_1)$. The profiles of such waves connect to the laminar state for $\xi \longrightarrow +\infty$ (stable manifold) and $\xi \longrightarrow -\infty$ (unstable manifold). Analogously, orbits in $W_{\boldsymbol{\alpha}}^s(X_2)$ and $W_{\boldsymbol{\alpha}}^u(X_2)$ correspond to traveling waves, with profiles connecting to the turbulent equilibrium for $\xi \longrightarrow +\infty$ and $\xi \longrightarrow -\infty$ respectively.} 

     \item \label{K10K20} \textbf{(R5)} \textcolor{black}{$K_{1,0}$ and $K_{2,0}$ are defined as the compact subsets:
    \begin{equation}
        K_{1,0}:=\{Z_1(u):u\in U_1\} \subset M_1,\,\,\,\,\,\,\,\,K_{2,0}:=\{Z_2(u):u\in U_2\}\subset M_2
    \end{equation}
    with $Z_1(u)=(0,0,u)$ and $Z_2(u)=\Big(1+\sqrt{\frac{r+u-2}{r+0.1}},0,u\Big)$. The subset $U_1$ (resp. $U_2$) of $\mathbb{R}$ is such that the orbit segments of the singular heteroclinic loop on $M_1$ (resp. the right branch of $M_2$), cf. definition \eqref{M1M2}, are strictly contained in $K_{1,0}$ (resp.  $K_{2,0}$). As described in \cite{EKR2022} we use \textit{geometric singular perturbation theory (GSPT)} to describe the sets $K_{1,0}$ and $K_{2,0}$, as well as the corresponding stable and unstable manifolds for parameter values $\alpha$ close to $\alpha_0=\alpha_0(r):=(D_0(r),\mu_0(r),0)$. One can show, following an analogous computation to the one in the proof of Lemma 3.3 (cf. \cite{EKR2022} for more details), that the manifold $K_{i,0}$ is \textit{normally hyperbolic}. GSPT then implies that $K_{i,0}$ remains as an invariant manifold $K_{i,\alpha}$ of dimension 1 in \eqref{eqdiff} depending smoothly on $\alpha$ for $\alpha$ close to $\alpha_0$.}

    \item \label{EQ 3.19} \textbf{(R6)} \textcolor{black}{We have smooth dependency on $\alpha$ for $\alpha$ close to $\alpha_0$ of the stable ($W_{\boldsymbol{\alpha}}^{s}(K_{i, \alpha})$) and unstable ($W_{\boldsymbol{\alpha}}^{u}(K_{i, \alpha})$) manifold of the invariant manifold $K_{i, \alpha}$ in \eqref{eqdiff}. At $\alpha=\alpha_0$, $W_{\boldsymbol{\alpha}}^{s}(K_{i, \alpha})$ (resp. $W_{\boldsymbol{\alpha}}^{u}(K_{i, \alpha})$) is given by the two-dimensional union of stable (resp. unstable) fibers :
\begin{equation}
    W_{\boldsymbol{\alpha_0}}^{s}(K_{i, 0})=\bigcup_{u \in U_i}W_{\boldsymbol{\alpha_0}}^{s}(Z_i(u)),\,\,\,\,\,\,\, W_{\boldsymbol{\alpha_0}}^{u}(K_{i, 0})=\bigcup_{u \in U_i}W_{\boldsymbol{\alpha_0}}^{u}(Z_i(u))
\end{equation}
with $W_{\boldsymbol{\alpha_0}}^{s}(Z_i(u))$ (resp. $W_{\boldsymbol{\alpha_0}}^{u}(Z_i(u))$) the stable (resp. unstable) manifold of dimension 1 of the equilibrium $Z_i(u) \in K_{i, 0}$ of the fast subsystem \eqref{fastss} }\textcolor{black}{(from equation (3.19) \cite{EKR2022})}

     \item \label{defQfQb} \textbf{(R7)} The approach followed in \cite{EKR2022} is to establish a heteroclinic loop in \eqref{eqdiff} through the identification of parameter values $\alpha$ close to $\alpha_0$ at which there are intersections between  $W_{\boldsymbol{\alpha}}^u(X_1)$ and $W_{\boldsymbol{\alpha}}^s(X_2)$, and between $W_{\boldsymbol{\alpha}}^s(X_1)$ and $W_{\boldsymbol{\alpha}}^u(X_2)$, using the fact that $W_{\boldsymbol{\alpha}}^s(X_k)_{k=1,2}$ coincides with  $W_{\boldsymbol{\alpha}}^s(K_{k,\alpha})$ for $\epsilon>0$. These intersections are located using Melnikov's method, cf. \cite{Holmes1980}, \cite{Melnikov63}, \cite{RemarksMelfunc}. With $\Sigma$ a plane perpendicular to the heteroclinic front $\gamma_f(\xi)$ at $\xi=0$, the unstable manifold $W_{\boldsymbol{\alpha_0}}^u(X_1)$ of dimension 1 intersects $\Sigma$ transversely at $\gamma_f(0)$. For $\alpha$ close to $\alpha_0$ there is, using smooth dependency on the parameters, a unique intersection point $X_\alpha^u$ located between $\Sigma$ and $W_{\boldsymbol{\alpha}}^u(X_1)$ such that $X^u_{\alpha_0}=\gamma_f(0)$. Using (R\ref{EQ 3.19}), the intersection of the manifold $W_{\boldsymbol{\alpha_0}}^s(K_{2,0})$ of dimension 2 and $\Sigma$ is a curve, which crosses the point $\gamma_f(0)$ and is parametrized by $u$. As a result, the vector $e_f:=(-s_f'(0;r),q_f'(0;r),0) \in \Sigma$ and the tangent vector of $\Sigma\,\,\cap W_{\boldsymbol{\alpha_0}}^s(K_{2,0})$ are transverse at $\gamma_f(0)$. Using smooth dependency on the parameters yields that $\Sigma\,\,\cap W_{\boldsymbol{\alpha}}^s(K_{2,\alpha})$ is also of dimension 1 (a curve), which depends smoothly on $\alpha$ for $\alpha$ close to $\alpha_0$. Hence the intersection of the line $l_\alpha \subset \Sigma$ through the point $X^u_\alpha$ parallel to $e_f$ with the curve $\Sigma \cap W_{\boldsymbol{\alpha}}^s(K_{2,0})$ is a unique point $X_{\alpha}^s$ for $\alpha$ close to $\alpha_0$, such that $X_{\alpha_0}^s=\gamma_f(0)$. Hence, we can write:
    \begin{equation}
        X_\alpha^s-X_\alpha^u=Q_f(\alpha;r)e_f
    \end{equation}
   with $Q_f: \mathcal{V} \times (\frac{2}{3}, +\infty) \longrightarrow \mathbb{R}$ a smooth function and $\mathcal{V} \subset \mathbb{R}^3$ a small neighborhood of $\boldsymbol{\alpha}_0$. $Q_f(\cdot;r)$, is known as a  \textit{Melnikov function}, and its roots correspond to the parameter values $\alpha$, such that the stable manifold $W^s_{\boldsymbol{\alpha}}(K_{2,\alpha})$ and the unstable manifold $W^u_{\boldsymbol{\alpha}}(X_1)$ intersect. From  
    $W^s_{\boldsymbol{\alpha}}(K_{2,\alpha})=W^s_{\boldsymbol{\alpha}}(X_2)$
    for $\epsilon>0$, we derive that such an intersection exhibits a heteroclinic front in the system \eqref{eqdiff} for $\epsilon>0$, which connects the steady states $X_1$ to $X_2$. Similarly, a Melnikov function $Q_b(\cdot;r)$ can be constructed, with the property that its roots for $\epsilon>0$ coincide with the parameter values $\alpha$, for which the stable manifold $W^s_{\boldsymbol{\alpha}}(K_{1,\alpha})=W^s_{\boldsymbol{\alpha}}(X_1)$ and the unstable manifold $W^u_{\boldsymbol{\alpha}}(X_2)$ intersect. This exhibits a heteroclinic back in \eqref{eqdiff} which connects $X_2$ to $X_1$.

    \item \label{invjacmat} \textbf{(R8)} The establishment of a heteroclinic loop in (R\ref{L3.7}) via the implicit function theorem requires the invertibility of the Jacobi matrix: \begin{equation}
    J(\boldsymbol{\alpha}_0(r);r)\textcolor{black}{:=(\partial_{D,\mu}Q_i(\alpha_0(r);r))_{i=f,b}}=\begin{pmatrix}
\frac{\partial Q_f}{\partial D}(\boldsymbol{\alpha}_0(r);r) & \frac{\partial Q_f}{\partial \mu}(\boldsymbol{\alpha}_0(r);r)\\
\frac{\partial Q_b}{\partial D}(\boldsymbol{\alpha}_0(r);r) & \frac{\partial Q_b}{\partial \mu}(\boldsymbol{\alpha}_0(r);r)
\end{pmatrix}
\end{equation}
\textcolor{black}{associated with the algebraic system of equations $Q_f(\alpha;r)=Q_b(\alpha;r)=0$.} This is the case if and only if the quantity:
\begin{equation}
    \widehat{M}(r):=\frac{{\frac{\partial Q_b}{\partial D}(\boldsymbol{\alpha}_0(r);r)}}{\frac{\partial Q_b}{\partial \mu}(\boldsymbol{\alpha}_0(r);r)}-\frac{{\frac{\partial Q_f}{\partial D}(\boldsymbol{\alpha}_0(r);r)}}{\frac{\partial Q_f}{\partial \mu}(\boldsymbol{\alpha}_0(r);r)}
    \label{Mhatde}
\end{equation}
    is non-zero. 
    
    \item \label{T2.2} \textbf{(R9)} \textcolor{black}{There are smooth functions $\mu_0:(\frac{2}{3}, +\infty) \longrightarrow (-\frac{8}{5}, \frac{1}{66}(3 \sqrt{115}-65))$ and $D_0:(\frac{2}{3}, +\infty) \longrightarrow (0, +\infty)$, which satisfy: 
\begin{equation}
        \lim_{r \longrightarrow \frac{2}{3}}\mu_0(r)=\frac{1}{66}(3 \sqrt{115}-65), \,\,\,\,\,\,\,\lim_{r \longrightarrow +\infty}\mu_0(r)=-\frac{8}{5}
    \end{equation}
and 
\begin{equation}
        \lim_{r \longrightarrow \frac{2}{3}}D_0(r)=\frac{10}{363}(34+3\sqrt{115}), \,\,\,\,\,\,\,\lim_{r \longrightarrow +\infty}D_0(r)=0,
    \end{equation}}
    such that for each fixed model Reynolds number $r>\frac{2}{3}$ satisfying $\widehat{M}(r) \neq 0$, there exists $\epsilon_0(r)>0$ such that the following holds: for each $\epsilon \in (0, \epsilon_0(r))$ there exist a diffusion rate $D=\widehat{D}(\epsilon,r)$ and a velocity $\mu=\widehat{\mu}(\epsilon,r)$ such that the dynamical system \eqref{eqdiff} admits a heteroclinic loop. It consists of a simple heteroclinic front and a simple heteroclinic back, which connect the steady states $X_1$ and $X_2$. \textcolor{black}{(from Theorem 2.2 \cite{EKR2022})}

    \item \label{melintmud} \textbf{(R10)} The \textcolor{black}{Melnikov integrals~\cite{GH}} with respect to $\mu$ and $D$ are given by 
    \begin{enumerate}
        \item along the heteroclinic front:

        \begin{equation}
        \frac{\partial Q_f}{\partial \mu}(u_f(r),D_0(r),\mu_0(r);r)=\frac{q_{f,+}(r)^2(r+0.1)}{D_0(r)^2}\widehat{M}_f(r)
        \end{equation}
    with 
\begin{equation}
\begin{split}
    \widehat{M}_f(r)&:=-\frac{q_{f,+}(r)D_0(r)\sqrt{2}(q_{f,+}(r)-2q_{f,-}(r))}{2(\mu_0(r)+2)}\int_{-\infty}^{+\infty}e^{-\sqrt{2}\big(\frac{1}{2}-\frac{q_{f,-}(r)}{q_{f,+}(r)}\big)\chi}\phi'(\chi)^2d\chi \\ &<0
    \label{Mhatf}
\end{split}
\end{equation}
 \textcolor{black}{where} $\phi: \mathbb{R} \longrightarrow \mathbb{R}$ is given by:
\begin{equation}
    \phi(\chi)=\frac{1}{1+e^{-\frac{1}{2}\sqrt{2}\chi}};
    \label{defphi}
\end{equation}
and
 \begin{equation}
        \frac{\partial Q_f}{\partial D}(u_f(r),D_0(r),\mu_0(r);r)=\frac{q_{f,+}(r)^2(r+0.1)}{D_0(r)^2}M_f(r)
        \end{equation}
        with 
    \begin{equation}
    \begin{split}
        M_f(r)&:=\frac{q_{f,+}(r)}{2}\sqrt{2}(q_{f,+}(r)-2q_{f,-}(r))\int_{-\infty}^{+\infty}e^{-\sqrt{2}\big(\frac{1}{2}-\frac{q_{f,-}(r)}{q_{f,+}(r)}\big)\chi}\phi'(\chi)^2d\chi \\ &+  \frac{0.1}{r+0.1}\int_{-\infty}^{+\infty}e^{-\sqrt{2}\big(\frac{1}{2}- \frac{q_{f,-}(r)}{q_{f,+}(r)}\big)\chi}\phi'(\chi)\phi(\chi)d\chi  \\ &- 2q_{f,+}(r)\int_{-\infty}^{+\infty}e^{-\sqrt{2}\big(\frac{1}{2}-\frac{q_{f,-}(r)}{q_{f,+}(r)}\big)\chi}\phi'(\chi)\phi(\chi)^2d\chi \\ &+ q_{f,+}(r)^2\int_{-\infty}^{+\infty}e^{-\sqrt{2}\big(\frac{1}{2}-\frac{q_{f,-}(r)}{q_{f,+}(r)}\big)\chi}\phi'(\chi)\phi(\chi)^3d\chi.
    \end{split}
    \label{Mf}
\end{equation}

\item along the heteroclinic back:

    \begin{equation}
        \frac{\partial Q_b}{\partial \mu}(u_b(r),D_0(r),\mu_0(r);r)=\frac{q_{b,+}(r)^2(r+0.1)}{D_0(r)^2}\widehat{M}_b(r)
        \end{equation}
        with \begin{equation}
\begin{split}
     \widehat{M}_b(r)&:=-\frac{q_{b,+}(r)\sqrt{2}(q_{b,+}(r)-2q_{b,-}(r))}{2(\mu_0(r)+u_b(r))}\int_{-\infty}^{+\infty}e^{-\sqrt{2}\big(\frac{1}{2}-\frac{q_{b,-}(r)}{q_{b,+}(r)}\big)\chi}\phi'(\chi)^2d\chi \\ &<0;
     \label{Mhatb}
\end{split}
\end{equation}
and 
    \begin{equation}
        \frac{\partial Q_b}{\partial D}(u_b(r),D_0(r),\mu_0(r);r)=\frac{q_{b,+}(r)^2(r+0.1)}{D_0(r)^2}M_b(r)
        \end{equation}
    with \begin{equation}
\begin{split}
     M_b(r)&:=-\frac{q_{b,+}(r)}{2}\sqrt{2}(q_{b,+}(r)-2q_{b,-}(r)) \int_{-\infty}^{+\infty}e^{-\sqrt{2}\big(\frac{1}{2}-\frac{q_{b,-}(r)}{q_{b,+}(r)}\big)\chi}\phi'(\chi)^2d\chi \\ &+\frac{u_b(r)-2.1}{r+0.1}\int_{-\infty}^{+\infty}e^{-\sqrt{2}\big(\frac{1}{2}-\frac{q_{b,-}(r)}{q_{b,+}(r)}\big)\chi}\phi'(\chi)\phi(\chi)d\chi \\ &+ 2q_{b,+}(r)\int_{-\infty}^{+\infty}e^{-\sqrt{2}\big(\frac{1}{2}-\frac{q_{b,-}(r)}{q_{b,+}(r)}\big)\chi}\phi'(\chi)\phi(\chi)^2d\chi \\ &- q_{b,+}(r)^2\int_{-\infty}^{+\infty}e^{-\sqrt{2}\big(\frac{1}{2}-\frac{q_{b,-}(r)}{q_{b,+}(r)}\big)\chi}\phi'(\chi)\phi(\chi)^3d\chi.
\end{split}
\label{Mb}
\end{equation}
\end{enumerate}
\item \label{S3.5.1} \textbf{(R11)} \textcolor{black}{The third Melnikov integral to compute (per front/ back) is with respect to $u$ and is given by:}
    \begin{enumerate}
        \item along the heteroclinic front:  \begin{equation}
        \frac{\partial Q_f}{\partial u}(u_f(r),D_0(r),\mu_0(r);r)=-\frac{1}{D_0(r)}\int_{-\infty}^{+\infty}e^{-\frac{\mu_0(r)+2}{D_0(r)}\xi}s_f(\xi;r)(s_f(\xi;r)-q_f(\xi;r))d\xi=\frac{q_{f,+}(r)^2}{D_0(r)}\widetilde{M_f}(r)
    \end{equation}
    with $\widetilde{M_f}: (\frac{2}{3},+ \infty) \longrightarrow \mathbb{R}$ defined as
    \begin{equation}
    \begin{split}
         \widetilde{M_f}(r):&=-q_{f,+}(r)\sqrt{\frac{r+0.1}{D_0(r)}}\int_{-\infty}^{+\infty}e^{-\sqrt{2}\big(\frac{1}{2}-\frac{q_{f,-}(r)}{q_{f,+}(r)}\big)\chi}\phi'(\chi)^2d\chi +\int_{-\infty}^{+\infty}e^{-\sqrt{2}\big(\frac{1}{2}-\frac{q_{f,-}(r)}{q_{f,+}(r)}\big)\chi}\phi'(\chi)\phi(\chi)d\chi,
         \label{defMftilde}
    \end{split}
    \end{equation}
  
and $q_f(\xi;r)=q_{f,+}(r)\phi\Big(q_{f,+}(r)\sqrt{\frac{r+0.1}{D}}\xi\Big)$
\textcolor{black}{(from equations (3.9), (3.11) and (3.27) \cite{EKR2022}).}
        \item along the heteroclinic back: \begin{equation}
\begin{split}
        \frac{\partial Q_b}{\partial u}(u_b(r),D_0(r),\mu_0(r);r)=-\frac{1}{D_0(r)}\int_{-\infty}^{+\infty}e^{-\frac{\mu_0(r)+u_b(r)}{D_0(r)}\xi}s_b(\xi;r)(s_b(\xi;r)-q_b(\xi;r))d\xi.
\end{split}
\end{equation}
$q_b(\xi;r)$ (resp. $s_b(\xi;r)$) denotes the $q$-component (resp. the $s$-component) of the heteroclinic back solution and $q_b(\xi;r)$ is given by (cf. \cite{EKR2022} Lemma 3.3):
\begin{equation}
    q_b(\xi;r)=q_{b,+}(r)\phi\Big(-q_{b,+}(r)\sqrt{\frac{r+0.1}{D}}\xi\Big)
\end{equation}
\textcolor{black}{(from Section 3.5.2, Lemma 3.3 \cite{EKR2022}).}
    \end{enumerate} 
 \item \label{L3.7} \textbf{(R12)} For $r > \frac{2}{3}$ and $\widehat{M}(r) \neq 0$ there exists $\epsilon_0(r)>0$ such that for all $\epsilon \in (0, \epsilon_0(r))$ there is a parameter combination $\boldsymbol{\alpha}(\epsilon;r)=(\widehat{D}(\epsilon;r),\widehat{\mu}(\epsilon;r), \epsilon)$ such that the dynamical system \eqref{eqdiff} has a heteroclinic loop. The stable $W^s_{\boldsymbol{\alpha}(\epsilon;r)}(K_{2,\boldsymbol{\alpha}(\epsilon;r)})$ and unstable $W^u_{\boldsymbol{\alpha}(\epsilon;r)}(K_{1,\boldsymbol{\alpha}(\epsilon;r)})$ manifolds intersect transversally along $\gamma_f^\epsilon$, analogously $W^s_{\boldsymbol{\alpha}(\epsilon;r)}(K_{1,\boldsymbol{\alpha}(\epsilon;r)})$ and $W^u_{\boldsymbol{\alpha}(\epsilon;r)}(K_{2,\boldsymbol{\alpha}(\epsilon;r)})$ intersect transversally along $\gamma_b^\epsilon$. Under the condition:
\begin{equation}
    \exists\,\,\beta>\frac{2}{3}\,\,\,\,\forall \,\,r\in(\frac{2}{3},\beta):\widetilde{M_f}(r)>0
\end{equation}
there exists $r_0>\frac{2}{3}$ such that the heteroclinic loop is non-degenerate for $r \in (\frac{2}{3},\beta)\cup(r_0, +\infty)$. It is double twisted for $r \in (\frac{2}{3},\beta)$.
\textcolor{black}{(from Lemma 3.7, Lemma 4.1, Lemma 4.4 \cite{EKR2022})}

\end{enumerate}

\subsection{Hypothesis \texorpdfstring{$H_0$}{Lg}: Existence of two hyperbolic equilibria \texorpdfstring{$X_1$}{Lg} and \texorpdfstring{$X_2$}{Lg}}
\label{Section2.1}

\begin{theorem}
The steady states $X_1$ and $X_2$ in \eqref{eqdiff} are hyperbolic when $\epsilon>0$ is taken sufficiently small.
\label{hyperbolicity}
\end{theorem}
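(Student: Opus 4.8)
The plan is to treat \eqref{eqdiff} as a singular perturbation of its $\epsilon=0$ limit and combine the block structure of the linearization with the fast/slow information coming from the results of \cite{EKR2022}. Both equilibria have $s=0$ and satisfy $f=g=0$, and since $c<u_b(r)<\tfrac43<2$ we have $u^*\neq c$ at each of them, where $u^*\in\{2,u_b(r)\}$ denotes the $u$-component; hence the Jacobian of the right-hand side of \eqref{eqdiff} at $X_k$ is
\begin{equation*}
J_k(\epsilon)=\begin{pmatrix} 0 & 1 & 0\\[2pt] -f_q/D & (u^*+\mu)/D & -f_u/D\\[4pt] \epsilon\, g_q/(u^*-c) & 0 & \epsilon\, g_u/(u^*-c)\end{pmatrix},
\end{equation*}
all derivatives of $f,g$ taken at $X_k$. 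At $\epsilon=0$ the matrix is block upper-triangular: the $2\times2$ upper-left block $A_k$ is precisely the linearization of the fast subsystem \eqref{fastss} in the layer through $X_k$, so by (R3) it is a hyperbolic saddle, i.e.\ $\det A_k=f_q(X_k)/D<0$ with two real eigenvalues of opposite sign, while the third eigenvalue of $J_k(0)$ is the simple zero eigenvalue of the slow direction.

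Next I would carry out the standard perturbation step. The two nonzero eigenvalues of $J_k(0)$ are simple, hence depend analytically on $\epsilon$ and, being real eigenvalues of a real matrix, stay real and nonzero for $\epsilon$ small; write them as $\lambda_1(\epsilon),\lambda_2(\epsilon)$, so that $\lambda_1(\epsilon)\lambda_2(\epsilon)\to\det A_k\neq0$. Since $\operatorname{tr}J_k(\epsilon)$ and $\det J_k(\epsilon)$ are real and $J_k(\epsilon)$ already possesses two real eigenvalues, the remaining eigenvalue $\lambda_3(\epsilon)$ is real as well, and from $\lambda_1\lambda_2\lambda_3=\det J_k(\epsilon)$ together with the cofactor expansion along the first row,
\begin{equation*}
\det J_k(\epsilon)=\frac{\epsilon}{D\,(u^*-c)}\bigl(f_q g_u-f_u g_q\bigr)\big|_{X_k}=:\frac{\epsilon\,\Delta_k}{D\,(u^*-c)},
\end{equation*}
one obtains $\lambda_3(\epsilon)=\Delta_k\epsilon / \bigl(D(u^*-c)\,\lambda_1(\epsilon)\lambda_2(\epsilon)\bigr)$. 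Thus, for every sufficiently small $\epsilon>0$ (so that also $\lambda_1(\epsilon)\lambda_2(\epsilon)\neq0$), hyperbolicity of $X_k$ reduces to the single nondegeneracy condition $\Delta_k\neq0$: then $\lambda_3(\epsilon)\neq0$, all three eigenvalues are real and nonzero, and $X_k$ is hyperbolic. The threshold $\epsilon_0$ can be taken uniform over any compact set of parameters $(D,\mu)$ with $D$ bounded away from $0$ — for instance a neighbourhood of $(D_0(r),\mu_0(r))$ — so the statement fits with the parameter selection in (R9) and (R12).

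The only genuinely model-specific input, and the part I expect to be the real obstacle, is verifying $\Delta_1\neq0$ and $\Delta_2\neq0$ from the explicit Barkley reaction terms and the equilibrium locations in (R1). From $g(q,u)=2-u+2q(1-u)$ one reads off $g_q=2(1-u)$ and $g_u=-1-2q$. At $X_1$ one has $q=0$; since $q$ is a factor of $f$ this gives $f_u(X_1)=0$, so $\Delta_1=f_q(X_1)\,g_u(X_1)=-f_q(X_1)$, which is nonzero by (R3). At $X_2=(q_{b,+}(r),0,u_b(r))$, using $q_{b,+}(r)>1$ and $u_b(r)\in(\tfrac65,\tfrac43)$, one finds $f_q(X_2)<0$ (again from (R3) and $D>0$), $g_q(X_2)=2(1-u_b(r))<0$, $g_u(X_2)=-1-2q_{b,+}(r)<0$ and $f_u(X_2)=q_{b,+}(r)>0$; hence in $\Delta_2=f_qg_u-f_ug_q$ both summands are strictly positive and $\Delta_2>0$. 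This would complete the reduction, and with it the proof: everything other than the two sign computations is soft perturbation theory, while the determinant identity above is exactly what guarantees that the $O(\epsilon)$ slow eigenvalue is real and genuinely nonzero rather than purely imaginary.
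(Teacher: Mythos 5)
Your proof is correct, and it takes a genuinely different route from the paper's. The paper's proof works entirely at the level of eigenvalue asymptotics in the two scalings of the singular perturbation framework: it cites (R3) for two nonzero eigenvalues of opposite sign surviving the limit $\epsilon\to 0^+$ in the fast scaling \eqref{eqdiff}, then passes to the stretched variable $\widehat\xi=\epsilon\xi$ in \eqref{rescaled}, invokes (R2) (the equilibria are sinks of the slow subsystem) to conclude that $\widehat\lambda_2^\epsilon(X_k)$ tends to a nonzero negative limit, and uses the eigenvalue rescaling relation $\lambda_2^\epsilon=\epsilon\widehat\lambda_2^\epsilon$ to conclude the third eigenvalue of the original system is nonzero for $\epsilon>0$ small. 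You instead write the Jacobian $J_k(\epsilon)$ explicitly, observe the block-triangular structure at $\epsilon=0$, and reduce the problem to the nondegeneracy of $\Delta_k=(f_qg_u-f_ug_q)|_{X_k}$ via the determinant identity $\lambda_1\lambda_2\lambda_3=\det J_k(\epsilon)=\epsilon\Delta_k/(D(u^*-c))$; you then verify $\Delta_1,\Delta_2>0$ by hand from the Barkley reaction terms. The two routes are ultimately equivalent — the quantity $\Delta_k/f_q$ divided by $u^*-c$ is precisely the slow eigenvalue whose nonvanishing is the content of (R2) — but yours trades the slow/fast rescaling argument and the citation of (R2) for elementary linear algebra plus two explicit sign checks, making the proof more self-contained and also yielding the leading-order asymptotics $\lambda_3(\epsilon)\sim \epsilon\Delta_k/(D(u^*-c)\det A_k)$ for free, which in particular confirms $\lambda_3(\epsilon)<0$ (since $\det A_k=f_q/D<0$, $\Delta_k>0$, $u^*-c>0$), consistent with the sink property that the paper invokes directly. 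Both your sign computations check out: $f_u(X_1)=0$ because $q$ is a factor of $f$, so $\Delta_1=-f_q(X_1)=0.1>0$; and at $X_2$ all four signs ($f_q<0$, $g_u<0$, $f_u>0$, $g_q<0$) make both summands of $\Delta_2$ strictly positive.
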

\begin{proof}
The idea is to pass to the limit $\epsilon \longrightarrow 0^+$ in the dynamical system \eqref{eqdiff}, which yields the fast subsystem \eqref{fastss} and to use \textcolor{black}{(R\ref{L3.3})}. As the centerline velocity $u$ is constant, it can be seen as a system parameter. Since we consider the \textcolor{black}{regime} $r>\frac{2}{3}$, we can apply \textcolor{black}{(R\ref{L3.3})}. Hence, linearizing \eqref{eqdiff} about $X_k$ ($k\in\{1,2\}$) yields two real eigenvalues $\lambda_1^\epsilon(X_k)$ and $\lambda_3^\epsilon(X_k)$ which have opposite sign and are bounded away from $0$ as $\epsilon \longrightarrow 0^+$, which means that: 
\begin{equation}
    \exists \,\, C>0 \,\,\,\,\forall  \,\, \epsilon>0 : \vert \lambda^\epsilon_i(X_k) \vert > C
\end{equation}
for $i\in\{1,3\}$ and $k\in\{1,2\}$ and one real eigenvalue $\lambda_2^\epsilon(X_k)$ \textcolor{black}{as the slow flow is one-dimensional}. Using the stretched spatial coordinate $\widehat{\xi}=\epsilon \xi$, the dynamical system \eqref{eqdiff} transforms into:
\begin{equation}
    \begin{split}
    \epsilon q_{\widehat{\xi}} &= s, \\
    \epsilon s_{\widehat{\xi}} &= \frac{1}{D}\big((u+\mu)s-f(q,u;r)\big), \\
    (u-c)u_{\widehat{\xi}} &= g(q,u).
    \end{split}
    \label{rescaled}
\end{equation}
One readily checks that for the eigenvalues $\lambda_i^\epsilon(X_k)$ and $\widehat{\lambda_i^\epsilon}(X_k)$ of the linearizations of \eqref{eqdiff} and \eqref{rescaled} about $X_k$ the following relation holds:
\begin{equation}
    \widehat{\lambda_i^\epsilon}(X_k)=\frac{1}{\epsilon}\lambda_i^\epsilon(X_k)
    \label{rescaling relation}
\end{equation}
for $i\in\{1,2, 3\}$ and $k\in\{1,2\}$. Taking the limit $\epsilon \longrightarrow{0^+}$ in system \eqref{rescaled}, yields the slow subsystem \eqref{slowsubsys}. With $r>\frac{2}{3}$ and $c<u_b(r)$, we apply \textcolor{black}{(R\ref{L3.2})}, which implies that $X_1$ and $X_2$ are sinks in the slow subsystem \eqref{slowsubsys}. Hence, for $\epsilon>0$ sufficiently small, the linearization of \eqref{rescaled} about $(X_k)_{k=1,2}$ has an eigenvalue $\widehat{\lambda_2^\epsilon}(X_k)=\frac{1}{\epsilon}\lambda_2^\epsilon(X_k)<0$ staying bounded as $\epsilon \longrightarrow 0^+$.
 The boundedness for $\widehat{\lambda_2^\epsilon}(X_k)$ as $\epsilon \longrightarrow 0^+$  together with the rescaling relation \eqref{rescaling relation} imply that $\lambda_2^\epsilon(X_k) \underset{\epsilon \longrightarrow 0^+}{\longrightarrow}0$. 
\end{proof}
\subsection{Hypothesis \texorpdfstring{$H_1$}{Lg}: Conditions on the spectrum of the linearized vector field at \texorpdfstring{$X_1$}{Lg} and \texorpdfstring{$X_2$}{Lg}}
\label{Hypothesis1}
\textcolor{black}{\begin{corollary} The following holds:
\begin{enumerate}
    \item For the dimension of the stable $W_{\boldsymbol{\alpha}}^s(X_k)_{k=1,2}$ and unstable  $W_{\boldsymbol{\alpha}}^u(X_k)_{k=1,2}$ manifolds:
\begin{equation}
        \dim(W_{\boldsymbol{\alpha}}^s(X_k))=2,\,\,\,\,\,\,\,\,\dim(W_{\boldsymbol{\alpha}}^u(X_k))=1.
\end{equation}  
\item For the eigenvalues of the linearization of \eqref{eqdiff} about $(X_k)_{k=1,2}$ for $\epsilon>0$ sufficiently small:
\begin{enumerate}
    \item $\lambda_1^\epsilon(X_k)<\lambda_2^\epsilon(X_k)<0<\lambda_3^\epsilon(X_k)$;
    \item $-\lambda_2^\epsilon(X_k)<\lambda_3^\epsilon(X_k)$;
    \item $ \lambda_1^\epsilon(X_k)$, $\lambda_2^\epsilon(X_k)$ and $\lambda_3^\epsilon(X_k)$ are simple eigenvalues.
\end{enumerate}
\end{enumerate}
\label{Corcondspec}
\end{corollary}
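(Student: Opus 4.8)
Everything needed is already contained in (the proof of) Theorem~\ref{hyperbolicity}; the corollary is a bookkeeping consequence. First I would record the three facts extracted there about the linearization of \eqref{eqdiff} at $X_k$ for small $\epsilon>0$: there is a constant $C>0$ such that $\lambda_1^\epsilon(X_k)<-C<0<C<\lambda_3^\epsilon(X_k)$ (two real eigenvalues of opposite sign, uniformly bounded away from $0$, with the index convention fixed so that $\lambda_1^\epsilon<0<\lambda_3^\epsilon$); the remaining eigenvalue $\lambda_2^\epsilon(X_k)$ is real with $\lambda_2^\epsilon(X_k)\to 0$ as $\epsilon\to 0^+$; and, since $c<u_b(r)$, (R\ref{L3.2}) makes $X_k$ a sink of the slow subsystem \eqref{slowsubsys}, so $\widehat{\lambda_2^\epsilon}(X_k)<0$ for small $\epsilon$ and hence $\lambda_2^\epsilon(X_k)=\epsilon\,\widehat{\lambda_2^\epsilon}(X_k)<0$ by the rescaling relation \eqref{rescaling relation}.

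Given these, parts 1 and 2(a)--(b) follow by comparing sizes once $\epsilon$ is small enough that $|\lambda_2^\epsilon(X_k)|<C$. Then $\lambda_1^\epsilon(X_k)<-C<\lambda_2^\epsilon(X_k)<0<C<\lambda_3^\epsilon(X_k)$, which is 2(a); exactly two eigenvalues are negative and one is positive, and $X_k$ is hyperbolic by Theorem~\ref{hyperbolicity}, so the invariant manifolds are well defined with $\dim W^s_{\boldsymbol{\alpha}}(X_k)=2$ and $\dim W^u_{\boldsymbol{\alpha}}(X_k)=1$, which is part 1; and $-\lambda_2^\epsilon(X_k)<C<\lambda_3^\epsilon(X_k)$, which is 2(b).

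For 2(c) I would perturb from $\epsilon=0$. In the fast subsystem \eqref{fastss} the linearization at $X_k$ is block triangular, with the $2\times 2$ block coming from the $(q,s)$-equations and the scalar block from $u_\xi=0$; by (R\ref{L3.3}) that $2\times 2$ block is the linearization of a hyperbolic saddle, so its eigenvalues $\lambda_1^0(X_k)<0<\lambda_3^0(X_k)$ are real and distinct, and the third eigenvalue is $0$, i.e. three simple eigenvalues at $\epsilon=0$. Simple eigenvalues depend continuously on $\epsilon$, so $\lambda_1^\epsilon(X_k),\lambda_3^\epsilon(X_k)$ stay real for small $\epsilon$, and the three eigenvalues remain pairwise distinct because $\lambda_1^\epsilon<-C$, $\lambda_3^\epsilon>C$ and $|\lambda_2^\epsilon|<C$; pairwise distinct eigenvalues of a $3\times 3$ matrix are simple. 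I do not expect a substantial obstacle: the only points requiring care are keeping the index convention consistent with Theorem~\ref{hyperbolicity}, using the sink condition $c<u_b(r)$ to pin down the sign of $\lambda_2^\epsilon(X_k)$, and noting that the ``third eigenvalue'' is genuinely real (it equals the trace minus $\lambda_1^\epsilon-\lambda_3^\epsilon$, both real), which is already implicit in the proof of Theorem~\ref{hyperbolicity}.
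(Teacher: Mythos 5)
Your proof is correct and follows essentially the same route as the paper: parts 1 and 2(a)--(b) are read off from the eigenvalue structure established in the proof of Theorem~\ref{hyperbolicity} (two hyperbolic eigenvalues bounded away from $0$, one slow eigenvalue tending to $0$ whose sign is pinned down by the sink condition (R\ref{L3.2})), and part 2(c) follows because a real $3\times 3$ matrix with three pairwise distinct real eigenvalues has only simple eigenvalues. Your perturbation-from-$\epsilon=0$ argument for 2(c) merely supplies an explicit justification of distinctness and realness that the paper leaves implicit -- indeed, once 2(a) holds with strict inequalities, distinctness (and hence simplicity) is immediate -- so it is a slightly more careful version of the same argument rather than a genuinely different approach.
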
}

\begin{proof} \textcolor{black}{This is essentially a direct computation using the three-dimensional ODE system. For 1 \& 2 (a), (b): Follow immediately from Theorem \ref{hyperbolicity} and its proof. To 2 (c):} The Jacobi matrix associated with the linearization of \eqref{eqdiff} and evaluated at $X_k$ is a (real) $3 \times 3$ matrix, so we have $3$ eigenvalues in $\mathbb{C}$ counted with multiplicity. Since all are distinct (real), all are simple for $k\in\{1,2\}$. 
\end{proof}

\subsection{Hypothesis \texorpdfstring{$H_2$}{Lg}: Existence of two heteroclinic orbits \texorpdfstring{$\gamma_f^\epsilon(\xi)$}{Lg} and \texorpdfstring{$\gamma_b^\epsilon(\xi)$}{Lg} connecting \texorpdfstring{$X_1$}{Lg} to \texorpdfstring{$X_2$}{Lg} and vice-versa}
\label{existenceheteroclinicloop}

\textcolor{black}{In this subsection we make use of  (R\ref{L3.7}), which implies that the dynamical system \eqref{eqdiff}  has a heteroclinic orbit $\gamma_f^\epsilon$ from $X_1$ to $X_2$ (front) and a heteroclinic orbit $\gamma_b^\epsilon$ from $X_2$ to $X_1$ (back) under the condition $\widehat{M}(r)\neq 0$ in the regime $r>\frac{2}{3}$.} It turns out that the condition $\widehat{M}(r)\neq 0$ is satisfied \textcolor{black}{for all $r>\frac{2}{3}$.} Since $\widehat{M}(r)$ consists of Melnikov integrals, which can be written explicitly as a function of $r$ \textcolor{black}{(cf.  (R\ref{melintmud}))}, one could check this condition theoretically for every value of $r$. We will show it rigorously for the limit $r \longrightarrow +\infty $ \textcolor{black}{, using following lemma.
\begin{lemma}
    It holds for $r>\frac{2}{3}:$
    \begin{equation}
         \widehat{M}(r)=\frac{M_b(r)}{\widehat{M
    }_b(r)}-\frac{M_f(r)}{\widehat{M
    }_f(r)}
    \end{equation}
    \label{hatM}
\end{lemma}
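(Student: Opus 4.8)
The plan is to obtain the identity by a direct substitution of the explicit Melnikov integrals recorded in (R\ref{melintmud}) into the definition of $\widehat{M}(r)$ from (R\ref{invjacmat}). Recall that, by (R\ref{invjacmat}),
\begin{equation*}
\widehat{M}(r)=\frac{\frac{\partial Q_b}{\partial D}(\boldsymbol{\alpha}_0(r);r)}{\frac{\partial Q_b}{\partial \mu}(\boldsymbol{\alpha}_0(r);r)}-\frac{\frac{\partial Q_f}{\partial D}(\boldsymbol{\alpha}_0(r);r)}{\frac{\partial Q_f}{\partial \mu}(\boldsymbol{\alpha}_0(r);r)},
\end{equation*}
so the task reduces to evaluating the two ratios $\big(\partial_D Q_i\big)/\big(\partial_\mu Q_i\big)$ at $\boldsymbol{\alpha}_0(r)=(D_0(r),\mu_0(r),0)$ for $i=f$ and $i=b$ separately.

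First I would treat the front. By (R\ref{melintmud})(a), both $\frac{\partial Q_f}{\partial \mu}$ and $\frac{\partial Q_f}{\partial D}$, evaluated at $\boldsymbol{\alpha}_0(r)$, are the \emph{same} prefactor $q_{f,+}(r)^2(r+0.1)/D_0(r)^2$ multiplied by $\widehat{M}_f(r)$ and $M_f(r)$, respectively. For $r>\tfrac{2}{3}$ one has $q_{f,+}(r)\neq 0$, $r+0.1>0$ and $D_0(r)>0$ by (R\ref{T2.2}), so this prefactor is a strictly positive real number; moreover $\widehat{M}_f(r)<0$ by \eqref{Mhatf}, so the denominator $\frac{\partial Q_f}{\partial \mu}(\boldsymbol{\alpha}_0(r);r)$ is nonzero and the quotient is well defined. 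Cancelling the common prefactor then gives
\begin{equation*}
\frac{\frac{\partial Q_f}{\partial D}(\boldsymbol{\alpha}_0(r);r)}{\frac{\partial Q_f}{\partial \mu}(\boldsymbol{\alpha}_0(r);r)}=\frac{M_f(r)}{\widehat{M}_f(r)}.
\end{equation*}

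Next I would handle the back in exactly the same fashion: by (R\ref{melintmud})(b), $\frac{\partial Q_b}{\partial \mu}$ and $\frac{\partial Q_b}{\partial D}$ at $\boldsymbol{\alpha}_0(r)$ share the strictly positive prefactor $q_{b,+}(r)^2(r+0.1)/D_0(r)^2$ with residual factors $\widehat{M}_b(r)$ and $M_b(r)$; since $\widehat{M}_b(r)<0$ by \eqref{Mhatb} the denominator is nonzero, and after cancellation one gets $\big(\partial_D Q_b\big)/\big(\partial_\mu Q_b\big)=M_b(r)/\widehat{M}_b(r)$. Inserting the two ratios into the displayed formula for $\widehat{M}(r)$ yields $\widehat{M}(r)=M_b(r)/\widehat{M}_b(r)-M_f(r)/\widehat{M}_f(r)$, which is the assertion. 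The only point requiring any care is to observe that the prefactors in numerator and denominator of each ratio are literally the same function of $r$ and therefore drop out, together with recording that the denominators do not vanish; this is immediate from the sign information in \eqref{Mhatf} and \eqref{Mhatb}, so there is no genuine obstacle — the lemma is purely a bookkeeping reformulation of (R\ref{invjacmat}) and (R\ref{melintmud}).
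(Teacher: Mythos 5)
Your argument is correct and is precisely what the paper intends: the paper's proof of Lemma~\ref{hatM} is the one-line remark that it follows directly from (R\ref{invjacmat}) and (R\ref{melintmud}), and you have simply spelled out the cancellation of the common prefactor and the non-vanishing of the denominators. This matches the paper's approach exactly, just in greater detail.
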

\begin{proof}
    Follows directly from \textcolor{black}{(R\ref{invjacmat})} and \textcolor{black}{(R\ref{melintmud})}.
\end{proof}
This} brings us to:

\begin{theorem}
    \begin{equation}
        \exists \,\, r_0>0 \,\,\,\, \forall\,\, r>r_0: \widehat{M}(r) \neq 0.
    \end{equation}
    \label{Mhatinftynon0}
\end{theorem}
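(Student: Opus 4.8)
The plan is to establish the stronger statement that $\widehat{M}(r)\to+\infty$ as $r\to+\infty$, which immediately furnishes an $r_{0}$ with $\widehat{M}(r)\neq0$ for all $r>r_{0}$. By Lemma~\ref{hatM} we have $\widehat{M}(r)=\frac{M_{b}(r)}{\widehat{M}_{b}(r)}-\frac{M_{f}(r)}{\widehat{M}_{f}(r)}$, so the task reduces to determining the $r\to+\infty$ asymptotics of the four Melnikov quantities in (R\ref{melintmud}).

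First I would collect the relevant elementary limits. From (R\ref{R1}) and (R\ref{T2.2}) one has $u_{b}(r)\to\frac{6}{5}$, $\mu_{0}(r)\to-\frac{8}{5}$ and $D_{0}(r)\to0^{+}$; consequently $q_{f,+}(r),q_{b,+}(r)\to2$ and $q_{f,-}(r),q_{b,-}(r)\to0$, so the ratios $q_{f,-}/q_{f,+}$ and $q_{b,-}/q_{b,+}$ tend to $0$ and the exponential weights $e^{-\sqrt{2}(\frac{1}{2}-q_{f,-}/q_{f,+})\chi}$ and $e^{-\sqrt{2}(\frac{1}{2}-q_{b,-}/q_{b,+})\chi}$ both converge to $e^{-\frac{1}{2}\sqrt{2}\chi}$. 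Moreover $\mu_{0}(r)+2\to\frac{2}{5}>0$ and $\mu_{0}(r)+u_{b}(r)\to-\frac{2}{5}<0$, and the coefficients $\frac{0.1}{r+0.1}$ and $\frac{u_{b}(r)-2.1}{r+0.1}$ appearing in \eqref{Mf} and \eqref{Mb} both tend to $0$.

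Next I would evaluate the limiting integrals. Using $\phi'=\frac{1}{2}\sqrt{2}\,\phi(1-\phi)$ and $e^{-\frac{1}{2}\sqrt{2}\chi}=\frac{1-\phi}{\phi}$, the substitution $\phi=\phi(\chi)$ turns each of the integrals $\int_{-\infty}^{+\infty}e^{-\sqrt{2}(\frac{1}{2}-q_{f,-}/q_{f,+})\chi}\phi'(\chi)^{2}\,d\chi$ and $\int_{-\infty}^{+\infty}e^{-\sqrt{2}(\frac{1}{2}-q_{f,-}/q_{f,+})\chi}\phi'(\chi)\phi(\chi)^{k}\,d\chi$ for $k=1,2,3$ (and their $b$-analogues) into an elementary integral $\int_{0}^{1}\phi^{a}(1-\phi)^{b}\,d\phi$ whose exponents depend continuously on the parameter $q_{f,-}/q_{f,+}$ (resp. $q_{b,-}/q_{b,+}$); this makes the interchange of limit and integral immediate, and at the limit value $0$ of that parameter it gives $\int e^{-\frac{1}{2}\sqrt{2}\chi}\phi'(\chi)^{2}\,d\chi=\frac{\sqrt{2}}{6}$, $\int e^{-\frac{1}{2}\sqrt{2}\chi}\phi'(\chi)\phi(\chi)\,d\chi=\frac{1}{2}$, $\int e^{-\frac{1}{2}\sqrt{2}\chi}\phi'(\chi)\phi(\chi)^{2}\,d\chi=\frac{1}{6}$, $\int e^{-\frac{1}{2}\sqrt{2}\chi}\phi'(\chi)\phi(\chi)^{3}\,d\chi=\frac{1}{12}$. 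Feeding these values and the elementary limits above into \eqref{Mf}, \eqref{Mhatf}, \eqref{Mb} and \eqref{Mhatb} yields $M_{f}(r)\to\frac{1}{3}$, $M_{b}(r)\to-\frac{1}{3}$, $\widehat{M}_{b}(r)\to\frac{5}{3}$, and --- the crucial point --- $\widehat{M}_{f}(r)=-\frac{5}{3}D_{0}(r)(1+o(1))\to0^{-}$: the decisive structural fact is that $M_{f}$, $M_{b}$ and $\widehat{M}_{b}$ carry no $D_{0}$-dependence, whereas $\widehat{M}_{f}$ carries the single vanishing prefactor $D_{0}(r)$, with its remaining factors $\mu_{0}(r)+2>0$, $q_{f,+}(r)-2q_{f,-}(r)>0$ (for $r$ large) and the positive integral pinning down a definite negative sign.

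Finally I would conclude. The ratio $\frac{M_{b}(r)}{\widehat{M}_{b}(r)}$ converges to $-\frac{1}{5}$, hence stays bounded, while $\frac{M_{f}(r)}{\widehat{M}_{f}(r)}=-\frac{1}{5D_{0}(r)}(1+o(1))\to-\infty$ since $M_{f}(r)\to\frac{1}{3}\neq0$, $\widehat{M}_{f}(r)\to0$ through negative values and $D_{0}(r)\to0^{+}$. Therefore $\widehat{M}(r)=\frac{M_{b}(r)}{\widehat{M}_{b}(r)}-\frac{M_{f}(r)}{\widehat{M}_{f}(r)}\to+\infty$, which gives the asserted $r_{0}$. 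I expect the only genuine obstacle to be bookkeeping: making the passage to the limit inside the integrals rigorous --- which the $\phi=\phi(\chi)$ substitution reduces to continuity of the one-parameter family $\int_{0}^{1}\phi^{a}(1-\phi)^{b}\,d\phi$ --- together with tracking the asymptotic signs of $\mu_{0}(r)+2$, $q_{f,+}(r)-2q_{f,-}(r)$ and $D_{0}(r)$ carefully enough to guarantee that $M_{f}(r)/\widehat{M}_{f}(r)$ diverges to $-\infty$ rather than to $+\infty$.
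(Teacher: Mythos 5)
Your argument is correct and in fact sharpens the paper's own proof. Both you and the paper begin from Lemma~\ref{hatM} and the limits $u_b(r)\to\frac{6}{5}$, $q_{f,\pm}(r),q_{b,\pm}(r)\to 2,0$, $\mu_0(r)\to -\frac{8}{5}$, $D_0(r)\to 0^+$, and both compute $M_f(r)\to\frac{1}{3}$, $M_b(r)\to-\frac{1}{3}$. The paper then stops: it simply invokes the sign assertions $\widehat{M}_f(r)<0$ and $\widehat{M}_b(r)<0$ from (R\ref{melintmud}) and concludes that $\widehat{M}(r)$ is a sum of two positive quotients. You go further and compute the limits $\widehat{M}_b(r)\to\frac{5}{3}$ and $\widehat{M}_f(r)=-\frac{5}{3}D_0(r)(1+o(1))\to 0^-$; these are the key extra steps, and they expose an issue in the paper. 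Indeed, plugging $\mu_0(r)+u_b(r)\to-\frac{2}{5}<0$, $q_{b,+}(r)-2q_{b,-}(r)\to 2>0$ and the integral $\to\frac{\sqrt{2}}{6}$ into the formula \eqref{Mhatb} gives a \emph{positive} limit $\frac{5}{3}$ for $\widehat{M}_b(r)$, contradicting the sign claim $\widehat{M}_b(r)<0$ the paper relies upon. With $\widehat{M}_b(r)>0$ for large $r$ the paper's ``positive minus negative'' argument is no longer positive minus negative but negative minus negative, and the simple sign count fails.

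Your observation that $\widehat{M}_f$ alone carries a vanishing prefactor $D_0(r)$, while $M_f$, $M_b$, $\widehat{M}_b$ all tend to finite nonzero limits, is precisely what rescues the conclusion: it yields $\frac{M_f(r)}{\widehat{M}_f(r)}\to-\infty$ while $\frac{M_b(r)}{\widehat{M}_b(r)}\to-\frac{1}{5}$ stays bounded, hence $\widehat{M}(r)\to+\infty$. This is a stronger and cleaner result than the paper's and does not depend on the questionable sign of $\widehat{M}_b$. The $\phi=\phi(\chi)$ substitution you propose, together with $\phi'=\frac{1}{2}\sqrt{2}\,\phi(1-\phi)$ and $e^{-\frac{1}{2}\sqrt{2}\chi}=\frac{1-\phi}{\phi}$, reduces all the integrals to $\int_0^1\phi^{a}(1-\phi)^{b}\,d\phi$ and makes the limit interchange and the explicit values $\frac{\sqrt{2}}{6},\frac{1}{2},\frac{1}{6},\frac{1}{12}$ routine; the paper instead evaluates only two of these via the explicit anti-derivatives of Lemma~\ref{lemma anti-derivatives} (and only in Section~\ref{H3}, not in this proof). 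In short: your approach and the paper's share the same starting point, but yours is the one that actually closes the argument.
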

\begin{proof}
For the $u$-component of the $X_2$ equilibrium $u_b(r)$, the behaviour for \textcolor{black}{large} values of $r$ can be captured as follows, cf. \textcolor{black}{(R\ref{R1})}:
\begin{equation}
    \lim_{r \longrightarrow +\infty}u_b(r)=\frac{6}{5}.
    \label{limub}
\end{equation}
Using \textcolor{black}{(R\ref{R1})}, we obtain: 
\begin{equation}
    q_{f,\pm}(r)=1\pm\sqrt{\frac{r}{r+0.1}},
    \label{expqfpm}
\end{equation}
hence
\begin{equation}
    \lim_{r\longrightarrow +\infty} q_{f,+}(r)=2,\,\,\,\,\,\lim_{r\longrightarrow +\infty} q_{f,-}(r)=0.
    \label{qf+lim}
\end{equation}

For the layer $u=u_b(r)$ we have from \textcolor{black}{(R\ref{R1})}:
\begin{equation}
    q_{b,\pm}(r)=1\pm\sqrt{\frac{r+u_b(r)-2}{r+0.1}},
\end{equation}
hence we obtain
\begin{equation}
    \lim_{r\longrightarrow +\infty} q_{b,+}(r)=2,\,\,\,\,\,\lim_{r\longrightarrow +\infty} q_{b,-}(r)=0.
    \label{qb+lim}
\end{equation}

Using equations \eqref{limub}, \eqref{qf+lim} and \eqref{qb+lim}, we explicitly compute the limits as $r \longrightarrow +\infty$ of $M_f(r)$ and $M_b(r)$ from equations \eqref{Mf} and \eqref{Mb}, and obtain:
\begin{equation}
    \lim_{r \longrightarrow + \infty} M_f(r)=\frac{1}{3},\,\,\,\,\,\,\,\lim_{r \longrightarrow + \infty} M_b(r)=-\frac{1}{3}.
\end{equation}

With $\widehat{M}_f(r)<0$ (see equation \eqref{Mhatf}), $\widehat{M}_b(r)<0$ (see equation \eqref{Mhatb}) \textcolor{black}{and using Lemma \ref{hatM}}, we get the existence of $r_0>0$ such that:
\begin{equation}
    \widehat{M}(r)=\frac{\overbrace{M_b(r)}^{<0 }}{\underbrace{\widehat{M}_b(r)}_{<0}}-\frac{\overbrace{M_f(r)}^{>0 }}{\underbrace{\widehat{M
    }_f(r)}_{<0}}
\end{equation}
is positive for all $r>r_0$ and hence non-zero. 
\end{proof}

The condition $\widehat{M}(r)\neq 0$ is in fact satisfied for all $r>\frac{2}{3}$. This can be seen by an explicit numerical plot of the formula for the Melnikov function (cf. Figure \ref{fig: Mhatplot}), but it is cumbersome to verify the involved explicit formulas analytically for all $r>\frac{2}{3}$. Since this is certainly possible using interval arithmetic techniques for integrals, we just demonstrated here the case for large $r$. 
Hence, the solutions $\gamma_f^\epsilon$ and $\gamma_b^\epsilon$ fulfill \textcolor{black}{in the regime $r>\frac{2}{3}$}:
\begin{equation}
    \lim_{\xi \longrightarrow -\infty}\gamma_f^\epsilon(\xi)=X_1,\,\,\ \lim_{\xi \longrightarrow +\infty}\gamma_f^\epsilon(\xi)=X_2
    \label{limit1}
\end{equation}
and 
\begin{equation}
    \lim_{\xi \longrightarrow -\infty}\gamma_b^\epsilon(\xi)=X_2,\,\,\ \lim_{\xi \longrightarrow +\infty}\gamma_b^\epsilon(\xi)=X_1.
    \label{limit2}
\end{equation}

\begin{figure}[H]
    \centering
    \includegraphics[width=11cm]{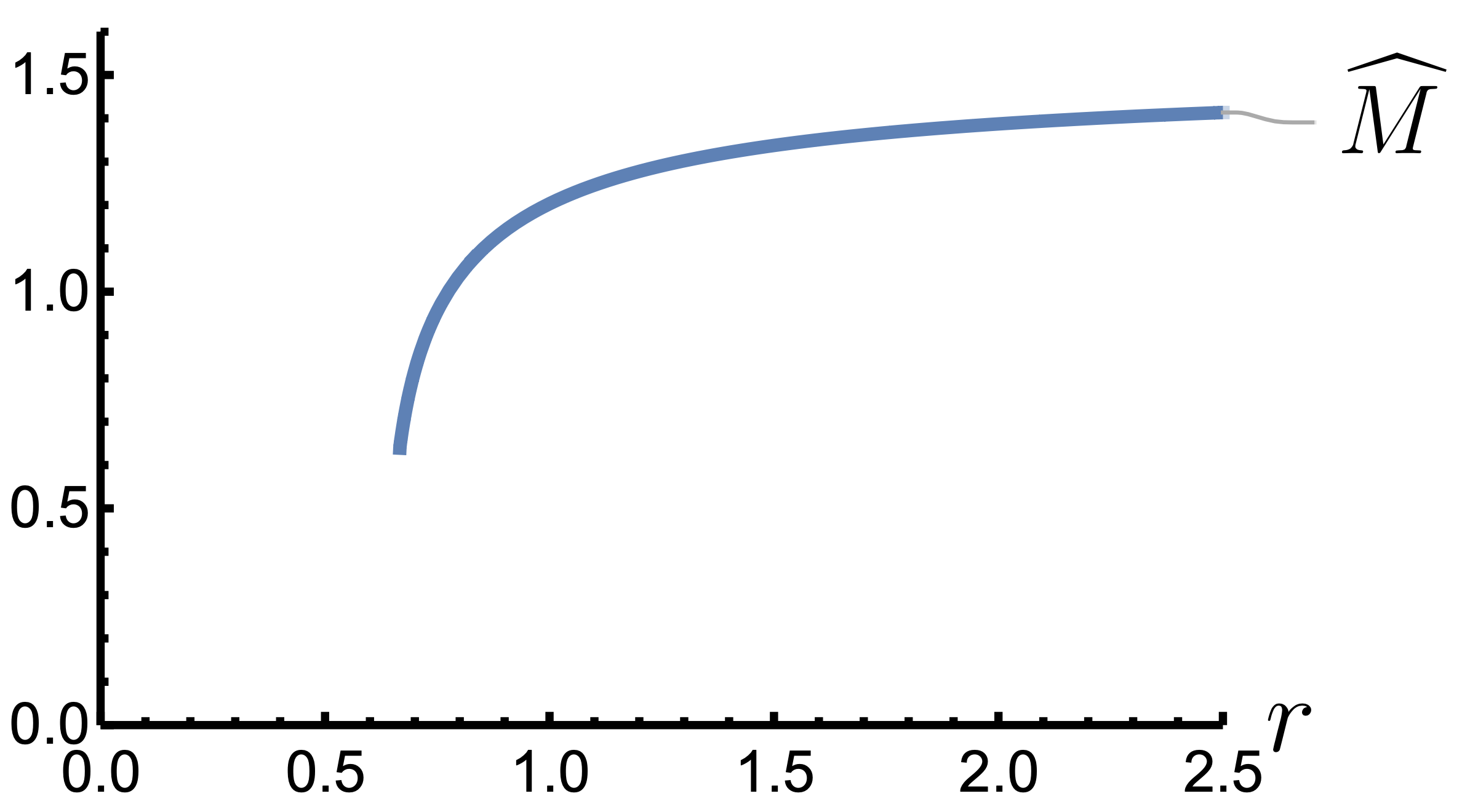}\\[3mm] 
    \caption{Representation of $\widehat{M}$ as a function of the model Reynolds number $r$ for $r\in(\frac{2}{3},\frac{5}{2})$ from \cite{EKR2022}. \textcolor{black}{Note that the grey curve just indicates that $\widehat{M}$ is given by the blue curve.} }
    \label{fig: Mhatplot}
\end{figure}

\subsection{Hypothesis \texorpdfstring{$H_3$}{Lg}: Non-degeneracy of the heteroclinic solutions}
\label{H3}

\begin{theorem}
There exists \textcolor{black}{$\beta$} $>\frac{2}{3}$, such that for  $I=(\frac{2}{3},\beta)$ the function $ \widetilde{M_f}$ \textcolor{black}{defined in \eqref{defMftilde} }satisfies:
\begin{equation}
    \forall\,\, r \in I : \,\, \widetilde{M_f}(r)>0.
\end{equation}
\label{thmint}
\end{theorem}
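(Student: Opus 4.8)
The plan is to establish the inequality by a continuity argument anchored at the left endpoint $r=\tfrac{2}{3}$, after rewriting the two integrals in \eqref{defMftilde} in closed form. Every quantity entering $\widetilde{M_f}(r)$ extends continuously to $r=\tfrac{2}{3}$: one has $q_{f,\pm}(r)=1\pm\sqrt{r/(r+0.1)}$ from (R\ref{R1}), and by (R\ref{T2.2}) the diffusion rate $D_0$ extends continuously with $D_0(\tfrac{2}{3})=\tfrac{10}{363}\bigl(34+3\sqrt{115}\bigr)$. Hence $r\mapsto\widetilde{M_f}(r)$ has a continuous extension to $[\tfrac{2}{3},+\infty)$, and it suffices to show that this extension is \emph{strictly positive} at $r=\tfrac{2}{3}$; the claim then holds on some right-neighborhood $(\tfrac{2}{3},\beta)$.

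First I would evaluate the integrals by the substitution $y=\phi(\chi)$. From \eqref{defphi} one computes $\phi'(\chi)=\tfrac{1}{\sqrt2}\,\phi(\chi)\bigl(1-\phi(\chi)\bigr)$ and $e^{-\chi/\sqrt2}=\tfrac{1-\phi(\chi)}{\phi(\chi)}$, so that the exponential weight becomes $e^{-\sqrt2(1/2-q_{f,-}/q_{f,+})\chi}=\bigl(\tfrac{1-y}{y}\bigr)^{p}$ with $p:=1-\tfrac{2q_{f,-}(r)}{q_{f,+}(r)}$; for $r>\tfrac{2}{3}$ one has $p\in(-1,1)$, which is exactly the range in which both improper integrals converge. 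Using $dy=\phi'(\chi)\,d\chi$ turns them into Euler Beta integrals, namely $\int_{-\infty}^{+\infty}e^{-\sqrt2(1/2-q_{f,-}/q_{f,+})\chi}\phi'(\chi)^2\,d\chi=\tfrac{1}{\sqrt2}B(2-p,p+2)$ and $\int_{-\infty}^{+\infty}e^{-\sqrt2(1/2-q_{f,-}/q_{f,+})\chi}\phi'(\chi)\phi(\chi)\,d\chi=B(2-p,p+1)$, while the Gamma recursion gives $B(2-p,p+2)=\tfrac{p+1}{3}B(2-p,p+1)$. Substituting into \eqref{defMftilde} yields
\begin{equation}
\widetilde{M_f}(r)=B(2-p,p+1)\left[\,1-\frac{p+1}{3\sqrt2}\,q_{f,+}(r)\sqrt{\frac{r+0.1}{D_0(r)}}\,\right],
\label{closedMf}
\end{equation}
and since $B(2-p,p+1)>0$ for $p\in(-1,1)$, the sign of $\widetilde{M_f}(r)$ equals that of the bracket in \eqref{closedMf}.

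Next I would simplify the bracket using the elementary identity $(p+1)\,q_{f,+}(r)=4\sqrt{r/(r+0.1)}$, which follows by inserting $q_{f,\pm}(r)=1\pm\sqrt{r/(r+0.1)}$ into the definition of $p$. Since all factors are positive, squaring shows that the bracket in \eqref{closedMf} is positive if and only if $18\,D_0(r)>(p+1)^2q_{f,+}(r)^2(r+0.1)=16\,r$, i.e.\ if and only if $D_0(r)>\tfrac{8}{9}r$. At $r=\tfrac{2}{3}$ this is $D_0(\tfrac{2}{3})=\tfrac{10}{363}\bigl(34+3\sqrt{115}\bigr)>\tfrac{16}{27}$, which holds with a large margin (already $\tfrac{340}{363}>\tfrac{16}{27}$). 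By the continuity of $D_0$ on $[\tfrac{2}{3},+\infty)$ from (R\ref{T2.2}) there is therefore $\beta>\tfrac{2}{3}$ with $D_0(r)>\tfrac{8}{9}r$, hence $\widetilde{M_f}(r)>0$, for all $r\in I=(\tfrac{2}{3},\beta)$; one may in fact take $\beta$ to be the first root of $r\mapsto D_0(r)-\tfrac{8}{9}r$ in $(\tfrac{2}{3},+\infty)$.

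The main obstacle is the careful, fully rigorous treatment of the two improper integrals in \eqref{defMftilde}: one must justify their convergence (equivalently $p\in(-1,1)$ on the relevant range of $r$), carry out the change of variables $y=\phi(\chi)$ and the Gamma-function recursion correctly, and verify that the squaring step leading to the criterion $D_0(r)>\tfrac{8}{9}r$ is legitimate because every quantity involved is positive. Once the closed form \eqref{closedMf} and the identity $(p+1)q_{f,+}(r)=4\sqrt{r/(r+0.1)}$ are in place, the statement reduces to the trivially verified numerical inequality $D_0(\tfrac{2}{3})>\tfrac{16}{27}$ together with the continuity of $D_0$ supplied by (R\ref{T2.2}).
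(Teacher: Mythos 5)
Your proof is correct, and it takes a genuinely different route from the paper's. The paper verifies positivity at the left endpoint $r=\tfrac{2}{3}$ by evaluating the two improper integrals \emph{numerically} with MATLAB (obtaining $\approx 0.426$ and $\approx 0.663$, hence $\lim_{r\to\frac{2}{3}^+}\widetilde{M_f}(r)\approx 0.202>0$), then invokes continuity of $\widetilde{M_f}$. You instead exploit the logistic structure of $\phi$, substitute $y=\phi(\chi)$, and recognize both integrals as Euler Beta integrals; the Gamma recursion $B(2-p,p+2)=\tfrac{p+1}{3}B(2-p,p+1)$ then collapses $\widetilde{M_f}(r)$ to the exact closed form $B(2-p,p+1)\bigl[1-\tfrac{p+1}{3\sqrt2}\,q_{f,+}(r)\sqrt{(r+0.1)/D_0(r)}\bigr]$, and the identity $(p+1)q_{f,+}(r)=4\sqrt{r/(r+0.1)}$ reduces the sign of the bracket to the transparent criterion $D_0(r)>\tfrac{8}{9}r$. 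I checked your change of variables, the Beta identities, the simplification $(p+1)^2q_{f,+}(r)^2(r+0.1)=16r$, and the endpoint inequality $D_0(\tfrac{2}{3})=\tfrac{10}{363}(34+3\sqrt{115})>\tfrac{16}{27}$; they are all correct, and your closed form reproduces the paper's numerical value $\approx 0.202$ at $r=\tfrac{2}{3}$. What your route buys over the paper's: the endpoint positivity is established by exact arithmetic rather than floating-point quadrature, and $\widetilde{M_f}(r)>0$ is characterized \emph{for all} $r>\tfrac{2}{3}$ by a single explicit inequality, so $\beta$ is pinned down as the first root of $D_0(r)-\tfrac{8}{9}r$ rather than read off a plot. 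One small imprecision: the Beta integrals $B(2-p,p+2)$ and $B(2-p,p+1)$ both converge precisely for $p\in(-1,2)$, not $p\in(-1,1)$ as you state; since one easily checks $p\in(0,1)$ for $r>\tfrac{2}{3}$, the argument is unaffected.
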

\begin{proof}
   From \textcolor{black}{(R\ref{S3.5.1})}, $\frac{\partial Q_f}{\partial u}$ and  $\widetilde{M_f}$ are related by:
    \begin{equation}
        \frac{\partial Q_f}{\partial u}(u_f(r),D_0(r),\mu_0(r);r)=\frac{q_{f,+}(r)^2}{D_0(r)}\widetilde{M_f}(r).
    \end{equation}
    
Now let us compute explicitly the limit $r \longrightarrow \frac{2}{3}^+$ in $\widetilde{M_f}(r)$. With equation \eqref{expqfpm} we obtain:
\begin{equation}
    \lim_{r \longrightarrow \frac{2}{3}^+} q_{f,\pm}(r)=1 \pm \sqrt{\frac{20}{23}}.
\end{equation}
The existence result of smooth functions $\mu_0$ and $D_0$ \textcolor{black}{(R\ref{T2.2})} establishes the following property for $D_0$ as $r \longrightarrow \frac{2}{3}^+$:
\begin{equation}
    \lim_{r \longrightarrow \frac{2}{3}^+}D_0(r)=\frac{10}{363}(34+3\sqrt{115}).
\end{equation}
Let
\begin{equation}
    a:=\frac{1}{2}-\frac{1-\sqrt{\frac{20}{23}}}{1+\sqrt{\frac{20}{23}}},\,\,\,\,\,\,b:=\sqrt{\frac{\frac{2}{3}+\frac{1}{10}}{\frac{10}{363}(34+3\sqrt{115})}}=\frac{1}{10}\sqrt{\frac{2783}{ 34+3\sqrt{115}}},\,\,\,\,\,\,c:=-\Big(1+\sqrt{\frac{20}{23}}\Big).
\end{equation}

Hence, we have:
\begin{equation}
    \lim_{r \longrightarrow \frac{2}{3}^+}\widetilde{M_f}(r)=bc\int_{-\infty}^{+\infty}e^{-\sqrt{2}a\chi}\phi'(\chi)^2d\chi+\int_{-\infty}^{+\infty}e^{-\sqrt{2}a\chi}\phi'(\chi)\phi(\chi)d\chi.
\end{equation}
Together with the definition \eqref{defphi} of $\phi$ and the computation of its derivative, we get:
\begin{equation}
    \lim_{r \longrightarrow \frac{2}{3}^+}\widetilde{M_f}(r)=\frac{bc}{2}\int_{-\infty}^{+\infty}\frac{e^{-\sqrt{2}(a+1)\chi}}{\big(1+e^{-\frac{\sqrt{2}}{2}\chi}\big)^4}d\chi+\frac{\sqrt{2}}{2}\int_{-\infty}^{+\infty}\frac{e^{-\sqrt{2}(a+\frac{1}{2})\chi}}{\big(1+e^{-\frac{\sqrt{2}}{2}\chi}\big)^3}d\chi.
\end{equation}
With
\begin{equation}
    \int_{-\infty}^{+\infty}\frac{e^{-\sqrt{2}(a+1)\chi}}{\big(1+e^{-\frac{\sqrt{2}}{2}\chi}\big)^4}d\chi \approx 0.426,\,\,\,\,\,\,\int_{-\infty}^{+\infty}\frac{e^{-\sqrt{2}(a+\frac{1}{2})\chi}}{\big(1+e^{-\frac{\sqrt{2}}{2}\chi}\big)^3}d\chi \approx 0.663
\end{equation}

both computed with MATLAB (\cite{MATLAB}), it holds for the limit of $\widetilde{M_f}(r)$ as $r \longrightarrow \frac{2}{3}^+$:
\begin{equation}
    \lim_{r \longrightarrow \frac{2}{3}^+} \widetilde{M_f}(r) \approx 0.202>0
\end{equation}

Due to the continuity of $\widetilde{M_f}$ -- which is clear from its definition \eqref{defMftilde} -- such an open interval $I$ with lower bound $\frac{2}{3}$ indeed exists. 
\end{proof}

\textcolor{black}{As before, we remark that the rigorous numerical computation of an explicit sign of an integral would be required to get a large interval, i.e., to increase $\beta$. This is a relatively easy task using interval arithmetic and asymptotics, which we have omitted here as it does not contribute to the main line of the argument. In the proof of the next theorem, we need the explicit computation of two anti-derivatives, } \textcolor{black}{which is an essential step to evaluate these new upcoming integrals over $\mathbb{R}$.}
\textcolor{black}{\begin{lemma}
    It holds on each integration interval $J$ subset of $\mathbb{R}$: 
       \begin{equation}
        \int\frac{e^{-\sqrt{2}\chi}}{\big(1+e^{-\frac{\sqrt{2}}{2}\chi}\big)^3}d\chi=-\frac{1}{\sqrt{2}\big(e^{\frac{\sqrt{2}}{2}\chi}+1\big)^2},\,\,\,\,\,\,\int\frac{e^{-\frac{3\sqrt{2}}{2}\chi}}{\big(1+e^{-\frac{\sqrt{2}}{2}\chi}\big)^4}d\chi=-\frac{\sqrt{2}}{3\big(e^{\frac{\sqrt{2}}{2}\chi}+1\big)^3}
    \end{equation}
    \label{lemma anti-derivatives}
\end{lemma}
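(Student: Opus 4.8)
The plan is to reduce both antiderivatives to elementary power-rule integrals by a single change of variables, and then to confirm the outcome by differentiating the claimed right-hand sides. No delicate analysis is involved; the lemma is a bookkeeping computation, and the only point that needs a word of care is that the substitution is globally valid on any interval $J \subseteq \mathbb{R}$.

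First I would set $t := e^{\frac{\sqrt{2}}{2}\chi}$, which is a smooth increasing bijection from $\mathbb{R}$ onto $(0,\infty)$, hence a legitimate change of variables on any $J \subseteq \mathbb{R}$. Then $\frac{dt}{d\chi} = \frac{\sqrt{2}}{2}\,t$, so $d\chi = \frac{\sqrt{2}}{t}\,dt$, and moreover $e^{-\sqrt{2}\chi} = t^{-2}$, $e^{-\frac{3\sqrt{2}}{2}\chi} = t^{-3}$, and $1 + e^{-\frac{\sqrt{2}}{2}\chi} = \frac{t+1}{t}$. Substituting into the first integrand gives $\frac{t^{-2}}{\big((t+1)/t\big)^{3}}\cdot\frac{\sqrt{2}}{t}\,dt = \frac{\sqrt{2}}{(t+1)^{3}}\,dt$, and into the second gives $\frac{t^{-3}}{\big((t+1)/t\big)^{4}}\cdot\frac{\sqrt{2}}{t}\,dt = \frac{\sqrt{2}}{(t+1)^{4}}\,dt$.

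Both integrals are now immediate. Using $\frac{\sqrt{2}}{2} = \frac{1}{\sqrt{2}}$, I get $\int \frac{\sqrt{2}}{(t+1)^{3}}\,dt = -\frac{\sqrt{2}}{2(t+1)^{2}} = -\frac{1}{\sqrt{2}(t+1)^{2}}$ and $\int \frac{\sqrt{2}}{(t+1)^{4}}\,dt = -\frac{\sqrt{2}}{3(t+1)^{3}}$. Re-substituting $t + 1 = e^{\frac{\sqrt{2}}{2}\chi} + 1$ yields exactly the two formulas stated in the lemma. The map $\chi \mapsto t$ sends any interval $J$ to a subinterval of $(0,\infty)$ on which $(t+1)^{-k}$ is smooth, so the power-rule antiderivative is valid throughout $J$ and no singularity is crossed; this is the only subtlety, and it is mild.

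There is no genuine obstacle here. As a redundant sanity check one may simply differentiate $-\frac{1}{\sqrt{2}\big(e^{\frac{\sqrt{2}}{2}\chi}+1\big)^{2}}$ and $-\frac{\sqrt{2}}{3\big(e^{\frac{\sqrt{2}}{2}\chi}+1\big)^{3}}$ directly by the chain rule: the derivative of $\big(e^{\frac{\sqrt{2}}{2}\chi}+1\big)^{-2}$ is $-2\big(e^{\frac{\sqrt{2}}{2}\chi}+1\big)^{-3}\cdot\frac{\sqrt{2}}{2}e^{\frac{\sqrt{2}}{2}\chi}$, and a short simplification (multiplying numerator and denominator by an appropriate power of $e^{-\frac{\sqrt{2}}{2}\chi}$) recovers $\frac{e^{-\sqrt{2}\chi}}{\big(1+e^{-\frac{\sqrt{2}}{2}\chi}\big)^{3}}$; similarly for the second. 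This verifies the identities without even invoking the substitution, and both routes can be recorded compactly.
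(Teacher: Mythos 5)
Your argument is correct, and it takes essentially the same approach as the paper — the paper's proof reads in full ``Immediate by computation,'' and your substitution $t=e^{\frac{\sqrt{2}}{2}\chi}$ followed by the power rule (or, equivalently, differentiating the right-hand sides) is precisely the computation being deferred to the reader.
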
}
\begin{proof}
    Immediate by computation.
\end{proof}
\begin{theorem}
     $\widetilde{M_f}$ has the following behaviour as $r \longrightarrow + \infty$:
     \begin{equation}
    \lim_{r \longrightarrow +\infty} \widetilde{M_f}(r) = -\infty.
\end{equation}
\label{assbehMftilde}
\end{theorem}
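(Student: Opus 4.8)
The strategy is to track the behavior of each ingredient of $\widetilde{M_f}(r)$ from \eqref{defMftilde} as $r \to +\infty$ and identify which term forces the divergence. Recall
\begin{equation*}
\widetilde{M_f}(r)=-q_{f,+}(r)\sqrt{\tfrac{r+0.1}{D_0(r)}}\int_{-\infty}^{+\infty}e^{-\sqrt{2}\big(\frac12-\frac{q_{f,-}(r)}{q_{f,+}(r)}\big)\chi}\phi'(\chi)^2\,d\chi+\int_{-\infty}^{+\infty}e^{-\sqrt{2}\big(\frac12-\frac{q_{f,-}(r)}{q_{f,+}(r)}\big)\chi}\phi'(\chi)\phi(\chi)\,d\chi.
\end{equation*}
From (R\ref{R1}) and \eqref{qf+lim} we have $q_{f,+}(r)\to 2$ and $q_{f,-}(r)\to 0$, so the exponent parameter $\tfrac12-\tfrac{q_{f,-}(r)}{q_{f,+}(r)}\to\tfrac12$; the two integrals therefore converge to the finite, positive limits $\int e^{-\frac{\sqrt2}{2}\chi}\phi'(\chi)^2\,d\chi$ and $\int e^{-\frac{\sqrt2}{2}\chi}\phi'(\chi)\phi(\chi)\,d\chi$ respectively (both are absolutely convergent since $\phi'$ decays exponentially at $+\infty$ and $\phi$ decays exponentially at $-\infty$ fast enough to beat $e^{-\frac{\sqrt2}{2}\chi}$; these are exactly the kind of integrals computed explicitly via Lemma \ref{lemma anti-derivatives}). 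Hence the second term stays bounded, and the whole expression is dominated by the prefactor $-q_{f,+}(r)\sqrt{(r+0.1)/D_0(r)}$ of the first term.

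The next step is to show this prefactor diverges to $+\infty$ in absolute value with a negative sign. We have $q_{f,+}(r)\to 2>0$, $r+0.1\to+\infty$, and crucially from (R\ref{T2.2}) that $D_0(r)\to 0$ as $r\to+\infty$. Therefore $\sqrt{(r+0.1)/D_0(r)}\to+\infty$, so $-q_{f,+}(r)\sqrt{(r+0.1)/D_0(r)}\to-\infty$. Multiplying by the first integral, which converges to a strictly positive constant, gives that the first term of $\widetilde{M_f}(r)$ tends to $-\infty$; adding the bounded second term does not change this, so $\widetilde{M_f}(r)\to-\infty$.

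The one point requiring care — and the main (minor) obstacle — is justifying the passage to the limit inside the two integrals: one must confirm that the integrals are uniformly dominated for $r$ large so that dominated convergence applies. This is routine because for $r$ large enough the exponent coefficient $\tfrac12-\tfrac{q_{f,-}(r)}{q_{f,+}(r)}$ lies in a compact subinterval of $(0,1)$ (say $[\tfrac13,\tfrac23]$), and on such a range one has a single integrable majorant of the form $C\big(e^{-\frac{\sqrt2}{3}\chi}+e^{-\frac{2\sqrt2}{3}\chi}\big)\phi'(\chi)\big(\phi'(\chi)+\phi(\chi)\big)$, using that $\phi(\chi)\le e^{\frac{\sqrt2}{2}\chi}$ for $\chi\le 0$ and $\phi'(\chi)\le e^{-\frac{\sqrt2}{2}\chi}$ for all $\chi$. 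Once this bound is in place, continuity of the integrand in $r$ and dominated convergence deliver the limits of the two integrals, and the conclusion follows. No new analytic tool beyond the asymptotics already recorded in (R\ref{R1}), (R\ref{T2.2}), and \eqref{qf+lim} is needed.
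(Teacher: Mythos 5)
Your proof is correct and follows essentially the same route as the paper: identify $D_0(r)\to 0^+$ (from (R\ref{T2.2})) together with $q_{f,\pm}(r)\to 2, 0$ as the source of divergence in the prefactor $-q_{f,+}(r)\sqrt{(r+0.1)/D_0(r)}$, check that both integrals stay bounded with the first one strictly positive, and conclude that the first term drags the sum to $-\infty$ while the second stays bounded. The only stylistic difference is that the paper evaluates the two limiting integrals explicitly ($\tfrac{1}{3\sqrt{2}}$ and $\tfrac{1}{2}$) via the antiderivatives of Lemma \ref{lemma anti-derivatives} rather than invoking dominated convergence with a uniform majorant, but that is a matter of presentation rather than substance — indeed your domination argument fills in a step that the paper passes over quickly.
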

\begin{proof}
    To prove this result, we use expression \eqref{defMftilde} and \eqref{qf+lim}, as well as following asymptotic behaviour of $D_0$ from \textcolor{black}{(R\ref{T2.2})}:
    \begin{equation}
        \lim_{r \longrightarrow + \infty}D_0(r)=0^+.
        \label{limD0}
    \end{equation}
    The second term of the sum \eqref{defMftilde} in the asympotic limit $r\longrightarrow + \infty$ is the integral
    \begin{equation}
        \int_{-\infty}^{+\infty}e^{-\frac{\sqrt{2}}{2}\chi}\phi'(\chi)\phi(\chi)d\chi= \frac{\sqrt{2}}{2}\int_{-\infty}^{+\infty}\frac{e^{-\sqrt{2}\chi}}{\big(1+e^{-\frac{\sqrt{2}}{2}\chi}\big)^3}d\chi.
    \end{equation}
     \textcolor{black}{Using Lemma \ref{lemma anti-derivatives} yields immediately:}
    \begin{equation}
        \frac{\sqrt{2}}{2}\int_{-\infty}^{+\infty}\frac{e^{-\sqrt{2}\chi}}{\big(1+e^{-\frac{\sqrt{2}}{2}\chi}\big)^3}d\chi=\frac{1}{2}.
    \end{equation}
    This shows in particular that the second term in the sum \eqref{defMftilde} is bounded as $r \longrightarrow + \infty$. Also the integral appearing in the first term of \eqref{defMftilde}:
    \begin{equation}
        \int_{-\infty}^{+\infty}e^{-\frac{\sqrt{2}}{2}\chi}\phi'(\chi)^2d\chi=\frac{1}{2}\int_{-\infty}^{+\infty}\frac{e^{-\frac{3\sqrt{2}}{2}\chi}}{\big(1+e^{-\frac{\sqrt{2}}{2}\chi}\big)^4}d\chi
        \label{positivityint}
    \end{equation}
    \textcolor{black}{can be explicitly computed using Lemma \ref{lemma anti-derivatives}, as:} 
    \begin{equation}
        \frac{1}{2}\int_{-\infty}^{+\infty}\frac{e^{-\frac{3\sqrt{2}}{2}\chi}}{\big(1+e^{-\frac{\sqrt{2}}{2}\chi}\big)^4}d\chi=\frac{1}{3\sqrt{2}},
    \end{equation}
    so the integral is bounded and positive (the positivity could be seen directly from expression \eqref{positivityint}). Using these two results for the integrals appearing in each term of expression  \eqref{defMftilde} for $\widetilde{M_f}(r)$, as well as \eqref{qf+lim} and \eqref{limD0} yields:
    \begin{equation}
        \lim_{r \longrightarrow +\infty} \widetilde{M_f}(r) = -\infty
    \end{equation}
    as required, finishing the proof.
\end{proof}
Hence, the interval $I$ in Theorem \ref{thmint} is a strict subset of $(\frac{2}{3},+\infty)$. Hence the positivity condition for $\widetilde{M_f}$ does not hold for every value of $r$. The numerical computation in \cite{EKR2022} shows that 
\begin{equation}
    \beta\approx 0.72946
    \label{beta}
\end{equation} can be taken, which means:
\begin{equation}
    \forall\,\,r\in\,\,I=(\frac{2}{3},\beta):\widetilde{M_f}(r)>0.
\end{equation}
\textcolor{black}{(R\ref{L3.7})} states the non-degeneracy of the heteroclinic loop established in Section \ref{existenceheteroclinicloop} \textcolor{black}{if there exists $\beta>\frac{2}{3}$ such that for all $r \in (\frac{2}{3}, \beta)$ the function $\widetilde{M_f}: (\frac{2}{3},+ \infty) \longrightarrow \mathbb{R}$ satisfies $\widetilde{M_f}(r)>0$,} which is verified theoretically in Theorem \ref{thmint}. The optimal (in the sense of largest possible) value is chosen \textcolor{black}{as in equation \eqref{beta}} in accordance with the numerical computation plotted in \cite{EKR2022} Figure 11b. Note that $\widetilde{M_f}$ is strictly decreasing suggesting that for $r>\frac{2}{3}$ the positivity condition is satisfied \textit{if and only if} $r\in(\frac{2}{3},\beta)$. \textcolor{black}{(R\ref{L3.7})} establishes the existence of $r_0>\frac{2}{3}$ such that for all $r\in(\frac{2}{3},\beta) \cup (r_0, +\infty)$ the heteroclinic loop consisting of the heteroclinic front $\gamma^\epsilon_f(\xi)$ and the heteroclinic back $\gamma^\epsilon_b(\xi)$ is non-degenerate. This means that:
\begin{enumerate}
    \item \begin{enumerate}
        \item \underline{For $\xi \longrightarrow +\infty$ :} $\gamma^\epsilon_f(\xi)$ is asymptotically tangent to the principal \textbf{stable} eigenvector $e_2(X_2)$ of the steady state $X_2=(q_{b,+}(r),0,u_b(r))$. Recall $\lambda_2(X_k)$ is the principal stable eigenvalue with corresponding eigenvector $e_2(X_k)$ for $k\in \{1,2\}$, cf. Section \ref{Hypothesis1}.
        \item \underline{For $\xi \longrightarrow -\infty$ :} $\gamma^\epsilon_f(\xi)$ is asymptotically tangent to the principal \textbf{unstable} eigenvector $e_3(X_1)$ of the steady state $X_1=(0,0,2)$. Recall $\lambda_3(X_k)$ is the principal unstable eigenvalue with corresponding eigenvector $e_3(X_k)$ for $k\in \{1,2\}$, cf. Section \ref{Hypothesis1}.
        \item \underline{For $\xi \longrightarrow +\infty$ :} $\gamma^\epsilon_b(\xi)$ is asymptotically tangent to the principal \textbf{stable} eigenvector $e_2(X_1)$ of the steady state $X_1$.
        \item \underline{For $\xi \longrightarrow -\infty$ :} $\gamma^\epsilon_b(\xi)$ is asymptotically tangent to the principal \textbf{unstable} eigenvector $e_3(X_2)$ of the steady state $X_2$.
    \end{enumerate}
    \item The strong inclination conditions hold:
    \begin{equation}
        \lim_{\xi \longrightarrow -\infty}T_{\gamma_f^\epsilon(\xi)}W^s_{\boldsymbol{\alpha}(\epsilon;r)}(X_2)=T_{X_1}W^u_{\boldsymbol{\alpha}(\epsilon;r)}(X_1)+T_{X_1}W^{ss}_{\boldsymbol{\alpha}(\epsilon;r)}(X_1)
        \label{cond1}
    \end{equation}
    and 
    \begin{equation}
        \lim_{\xi \longrightarrow -\infty}T_{\gamma_b^\epsilon(\xi)}W^s_{\boldsymbol{\alpha}(\epsilon;r)}(X_1)=T_{X_2}W^u_{\boldsymbol{\alpha}(\epsilon;r)}(X_2)+T_{X_2}W^{ss}_{\boldsymbol{\alpha}(\epsilon;r)}(X_2),
        \label{cond2}
    \end{equation}
    where $T_xW$ means the tangent space of a given manifold $W$ at the base point $x \in W$. $W^s_{\boldsymbol{\alpha}(\epsilon;r)}(X_k)$, $W^u_{\boldsymbol{\alpha}(\epsilon;r)}(X_k)$ and $W^{ss}_{\boldsymbol{\alpha}(\epsilon;r)}(X_k)$ denote the stable, unstable and strong stable manifolds of $X_k$ respectively. The dimensions of these manifolds are given by for $k\in \{1,2\}$:
    \begin{equation}
        \dim (W^s_{\boldsymbol{\alpha}(\epsilon;r)}(X_k))=2,\,\,\,\,\,\,\,\dim (W^u_{\boldsymbol{\alpha}(\epsilon;r)}(X_k))=1,\,\,\,\,\,\,\,\dim (W^{ss}_{\boldsymbol{\alpha}(\epsilon;r)}(X_k))=1.
    \end{equation}
\end{enumerate}
We justify latter two points as follows:
\begin{enumerate}
    \item 
    \begin{enumerate}
        \item First of all, we observe that
\begin{equation}
    \gamma^\epsilon_f \nsubseteq W^{ss}_{\boldsymbol{\alpha}(\epsilon;r)}(X_2).
    \label{nonincl}
\end{equation}
\textcolor{black}{This result, which follows by continuity, is a central element to verify the asymptotics of the heteroclinic front as $\xi \longrightarrow +\infty$, key here is to consider the $u$-component.} In the fast subsystem \eqref{fastss} the strong manifold $W^{ss}_{\boldsymbol{\alpha_0}(r)}(X_2)$ lies in the layer $u=u_b(r)$, where $\alpha_0=(D_0(r),\mu_0(r),0)$ denotes the set of parameter values at which the singular heteroclinic loop exists. However, in the limit $\epsilon \longrightarrow 0^+$, the heteroclinic front $\gamma_f^\epsilon$ converges to the connection $X_1 \longrightarrow X_2$ of the singular heteroclinic front (cf. Figure \ref{fig: Singloop}).  This connection is made up of two parts: the heteroclinic connection $X_f$ in the fast subsystem \eqref{fastss} -- which lies in the layer $u=2$ -- and the slow orbit segment in the slow subsystem \eqref{slowsubsys} -- which lies on the manifold $M_0$ -- connecting $(q_{f,+}(r), 0, 2)$ to $X_2$. As corollary of equation \eqref{nonincl}, for $\xi \longrightarrow +\infty$, the heteroclinic front $\gamma_f^\epsilon(\xi)$ is asymptotically tangent to the principal stable eigenvector $e_2(X_2)$ of $X_2$. 
\item The convergence tangent to the principal unstable eigenvector $e_3(X_1)$ of $X_1$ as $\xi \longrightarrow -\infty$ is straightforward, since the spectral decomposition of the linearized vector field at $X_1$ exhibits only one unstable eigenvalue, namely $\lambda^\epsilon_3(X_1)$. \textcolor{black}{Using latter key result from Section \ref{Hypothesis1} enables to elegantly justify the asymptotics of the heteroclinic front as $\xi \longrightarrow -\infty$.}
\item \& \,(d) Similarly, one can show for the heteroclinic back $\gamma_b^\epsilon(\xi)$ the convergence along the principle stable and unstable eigenvectors $e_2(X_1)$ and $e_3(X_2)$ as $\xi \longrightarrow +\infty$ and $\xi \longrightarrow -\infty$ respectively. 
\end{enumerate}
\item The idea is to apply the strong $\lambda$-lemma from \cite{BoDeng88} to prove the strong inclination property for vanishing $\epsilon$ and then use robustness to show it for $\epsilon>0$ sufficiently small. To do so, we will need the following lemma:
\begin{lemma}
For $r \in (\frac{2}{3},\beta)\cup(r_0,+\infty)$ the Melnikov integrals with respect to $u$ do not vanish:
\begin{equation}
\frac{\partial Q_j}{\partial u}(u_j(r),D_0(r),\mu_0(r);r) \neq 0,
\label{in123123}
\end{equation}
where $j=f,b$.  
\label{Lemma Melnnonvan}
\end{lemma}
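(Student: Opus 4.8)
The plan is to split into the two cases $j=f$ (front) and $j=b$ (back) and to use the explicit representations of $\frac{\partial Q_f}{\partial u}$ and $\frac{\partial Q_b}{\partial u}$ collected in (R\ref{S3.5.1}). For the front I would reduce everything to the sign analysis of $\widetilde{M_f}$ that has already been carried out; for the back I expect a short sign inspection of the integrand to suffice, and in fact to deliver the stronger conclusion $\frac{\partial Q_b}{\partial u}(u_b(r),D_0(r),\mu_0(r);r)<0$ for \emph{every} $r>\frac23$.

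For the front, start from $\frac{\partial Q_f}{\partial u}(u_f(r),D_0(r),\mu_0(r);r)=\frac{q_{f,+}(r)^2}{D_0(r)}\widetilde{M_f}(r)$ and observe that the prefactor is strictly positive: $q_{f,+}(r)=1+\sqrt{r/(r+0.1)}>1$, and $D_0(r)>0$ by (R\ref{T2.2}). Hence it is enough to know $\widetilde{M_f}(r)\neq0$ on $(\frac23,\beta)\cup(r_0,+\infty)$. On $(\frac23,\beta)$ this is Theorem \ref{thmint} (which gives $\widetilde{M_f}>0$). For large $r$, Theorem \ref{assbehMftilde} gives $\widetilde{M_f}(r)\to-\infty$, so there is $R>\frac23$ with $\widetilde{M_f}(r)<0$ for $r>R$; enlarging $r_0$ to $\max\{r_0,R\}$ — harmless, since (R\ref{L3.7}) only asserts the existence of such a threshold — yields $\widetilde{M_f}(r)<0$, in particular $\neq0$, on all of $(r_0,+\infty)$.

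For the back, the key point is that the integrand of $\frac{\partial Q_b}{\partial u}$ in (R\ref{S3.5.1}) is strictly of one sign. From $q_b(\xi;r)=q_{b,+}(r)\,\phi\!\big(-q_{b,+}(r)\sqrt{(r+0.1)/D_0(r)}\,\xi\big)$ and the fact that $\phi$ takes values in $(0,1)$ with $\phi'>0$ everywhere, one reads off $q_b(\xi;r)>0$ and $s_b(\xi;r)=q_b'(\xi;r)<0$ for every $\xi$, hence also $s_b(\xi;r)-q_b(\xi;r)<0$. Therefore $e^{-\frac{\mu_0(r)+u_b(r)}{D_0(r)}\xi}\,s_b(s_b-q_b)>0$ pointwise on $\mathbb{R}$. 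The integral converges (its integrand decays exponentially as $\xi\to\pm\infty$ since $\gamma_b^\epsilon$ approaches the hyperbolic saddles $X_2$ and $X_1$ at exponential rate, cf. (R\ref{L3.3}) and \cite{EKR2022}), so it is a strictly positive number; since $D_0(r)>0$, the factor $-1/D_0(r)$ then forces $\frac{\partial Q_b}{\partial u}<0$, in particular $\neq0$, for all $r>\frac23$.

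I expect the one real difficulty to be the front on $(r_0,+\infty)$: there the factor $s_f-q_f$ changes sign along the front heteroclinic, so no pointwise argument is available and one must genuinely invoke the asymptotics $\widetilde{M_f}(r)\to-\infty$ of Theorem \ref{assbehMftilde}, which themselves rely on the explicit antiderivatives of Lemma \ref{lemma anti-derivatives} together with $D_0(r)\to0^+$. A secondary point to note is the gap $(\beta,r_0)$, on which these arguments say nothing about the sign of $\widetilde{M_f}$ (numerically $\widetilde{M_f}$ is strictly decreasing, so one expects the single threshold $r_0=\beta$ to work, but I would not prove monotonicity here); this is exactly why Lemma \ref{Lemma Melnnonvan} is stated only on $(\frac23,\beta)\cup(r_0,+\infty)$.
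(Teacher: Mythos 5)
Your proposal matches the paper's own proof essentially step for step: for $j=f$ you reduce to the sign of $\widetilde{M_f}$ via $\frac{\partial Q_f}{\partial u}=\frac{q_{f,+}^2}{D_0}\widetilde{M_f}$ and invoke Theorem \ref{thmint} on $(\frac23,\beta)$ together with Theorem \ref{assbehMftilde} for large $r$, and for $j=b$ you use the pointwise positivity of $s_b(s_b-q_b)$ (from $q_b>0$, $s_b=q_b'<0$) to conclude $\frac{\partial Q_b}{\partial u}<0$ for all $r>\frac23$. The extra remarks you add — replacing $r_0$ by $\max\{r_0,R\}$ to align the two thresholds, checking integrability, and flagging the gap $(\beta,r_0)$ — are sound and slightly more careful than the paper's write-up, but do not change the argument.
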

\begin{proof}
As a first case, we show this result along the heteroclinic front (i.e. for $j=f$). From \textcolor{black}{(R\ref{S3.5.1})}, the derivative $\frac{\partial Q_f}{\partial u}$ is given by:
\begin{equation}
    \frac{\partial Q_f}{\partial u}(u_f(r),D_0(r),\mu_0(r);r)=\frac{q_{f,+}(r)^2}{D_0(r)}\widetilde{M_f}(r)
    \label{derdef1}.
\end{equation}
We showed in Section \ref{H3} that $\widetilde{M_f}(r)>0$ for $r\in (\frac{2}{3},\beta)$ in particular:
\begin{equation}
    \forall \,\,r\in (\frac{2}{3},\beta): \widetilde{M_f}(r)\neq 0.
    \label{neq1}
\end{equation}
In addition, according to the \textcolor{black}{asymptotic} behaviour of $\widetilde{M_f}(r)$ as $r \longrightarrow +\infty$ (cf. Theorem \ref{assbehMftilde}):
\begin{equation}
    \exists\,\,r_0>\frac{2}{3}\,\,\,\, \forall\,\,r\in(r_0,+\infty): \widetilde{M_f}(r) < 0.
\end{equation}
So in particular:
\begin{equation}
    \forall\,\,r\in(r_0,+\infty): \widetilde{M_f}(r) \neq 0.
    \label{neq2}
\end{equation}
Now with equation \eqref{expqfpm}, we immediately obtain that $q_{f,+}(r)>1$ so in particular:
\begin{equation}
    \forall \,\,r\in (\frac{2}{3},\beta)\cup(r_0,+\infty): q_{f,+}(r)^2\neq 0.
    \label{neq3}
\end{equation}
Inserting these non-vanishing results \eqref{neq1}, \eqref{neq2} and \eqref{neq3} into expression \eqref{derdef1} yields:
\begin{equation}
    \forall \,\,r\in (\frac{2}{3},\beta)\cup(r_0,+\infty): \frac{\partial Q_f}{\partial u}(u_f(r),D_0(r),\mu_0(r);r)\neq 0.
\end{equation}\\
Next we show the result along the heteroclinic back (i.e. for $j=b$). The Melnikov integral $\frac{\partial Q_b}{\partial u}(u_b(r),D_0(r),\mu_0(r);r)$ can be computed explicitly, cf. \textcolor{black}{(R\ref{S3.5.1})}, as:
\begin{equation}
\begin{split}
        \frac{\partial Q_b}{\partial u}(u_b(r),D_0(r),\mu_0(r);r)=-\frac{1}{D_0(r)}\int_{-\infty}^{+\infty}e^{-\frac{\mu_0(r)+u_b(r)}{D_0(r)}\xi}s_b(\xi;r)(s_b(\xi;r)- q_b(\xi;r))d\xi.
\end{split}
\end{equation}

$q_b(\xi;r)$ connects $q_{b,+}(r)>0$ to 0 and is monotonically decreasing, hence:
\begin{equation}
    \forall\,\,r>\frac{2}{3}: \frac{\partial Q_b}{\partial u}(u_b(r),D_0(r),\mu_0(r);r)<0.
    \label{Qbu}
\end{equation}
So in particular:
\begin{equation}
    \forall\,\,r\in(\frac{2}{3},\beta)\cup(r_0,+\infty): \frac{\partial Q_b}{\partial u}(u_b(r),D_0(r),\mu_0(r);r) \neq 0.
\end{equation}
\end{proof}
\textcolor{black}{Inequality \eqref{in123123} is decisive, since it implies in particular that $W_{\boldsymbol{\alpha}_0}^s(K_{i,0})$ and $W_{\boldsymbol{\alpha}_0}^u(K_{j,0})$ intersect transversely:}

\textcolor{black}{\begin{corollary}
    For $r \in (\frac{2}{3},\beta)\cup(r_0,+\infty)$, we have:
    \begin{enumerate}
        \item a transverse crossing of the stable and unstable manifolds $W_{\boldsymbol{\alpha}_0}^s(K_{1,0})$ and $W_{\boldsymbol{\alpha}_0}^u(K_{2,0})$
        \item a transverse crossing of the stable and unstable manifolds $W_{\boldsymbol{\alpha}_0}^s(K_{2,0})$ and $W_{\boldsymbol{\alpha}_0}^u(K_{1,0})$
    \end{enumerate}
    \label{cortransvross}
\end{corollary}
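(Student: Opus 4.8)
The assertion is the geometric reformulation of Lemma \ref{Lemma Melnnonvan}: once the meaning of the Melnikov derivatives $\partial_u Q_f$, $\partial_u Q_b$ recalled from (R\ref{defQfQb}) is made precise, the corollary follows at once. Recall from (R\ref{defQfQb}) that $Q_f(\cdot;r)$ is (a normalization of) the signed $e_f$-splitting, measured in the transverse section $\Sigma$ at $\gamma_f(0)$, between the unstable manifold of $X_1$ and the stable manifold of $K_{2,\alpha}$; at $\alpha=\alpha_0$ (so $\epsilon=0$) this is a splitting between the two curves $\Sigma\cap W^u_{\boldsymbol{\alpha}_0}(K_{1,0})$ and $\Sigma\cap W^s_{\boldsymbol{\alpha}_0}(K_{2,0})$, each one-dimensional (a two-dimensional manifold meeting the two-plane $\Sigma$ in $\mathbb{R}^3$) and, by normal hyperbolicity of $K_{i,0}$ together with the stable/unstable fibration (R\ref{K10K20}), (R\ref{EQ 3.19}), naturally and regularly parametrized by the slow variable $u$. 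The plan is therefore: (i) rewrite transversality of the two two-dimensional manifolds along $\gamma_f$ as transversality of these two curves inside $\Sigma$; (ii) identify $\partial_u Q_f(u_f(r),D_0(r),\mu_0(r);r)$, up to a nonzero factor, with the determinant of the two curve tangents at $\gamma_f(0)$; (iii) invoke Lemma \ref{Lemma Melnnonvan}; (iv) repeat the argument verbatim along the back.

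For steps (i)--(ii): at $\alpha=\alpha_0$ the relevant connection $\gamma_f$ is the fast jump, lying entirely in the layer $\{u=u_f(r)\}$, so its tangent $\gamma_f'$ has zero $u$-component, whereas $W^u_{\boldsymbol{\alpha}_0}(K_{1,0})$ and $W^s_{\boldsymbol{\alpha}_0}(K_{2,0})$ are unions of fast fibers sitting over distinct layers; hence the tangents $w^u$ of $\Sigma\cap W^u_{\boldsymbol{\alpha}_0}(K_{1,0})$ and $w^s$ of $\Sigma\cap W^s_{\boldsymbol{\alpha}_0}(K_{2,0})$ at $\gamma_f(0)$ both have nonzero $u$-component. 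Since $e_f=(-s_f'(0;r),q_f'(0;r),0)$ has zero $u$-component, $\{e_f,w^u\}$ is a basis of $T_{\gamma_f(0)}\Sigma$; writing $w^s=\alpha\,e_f+\beta\,w^u$ and differentiating the defining relation of $Q_f$ from (R\ref{defQfQb}) (reaching the stable curve from the unstable footpoint along the $e_f$-direction), a short chain-rule computation gives $\partial_u Q_f(u_f(r),D_0(r),\mu_0(r);r)=\alpha\,\det[e_f,w^u]$ divided by the $e_f$-transverse component of $w^s$, which is nonzero precisely because $e_f$ and $w^s$ are transverse in $\Sigma$ (again (R\ref{defQfQb})). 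Thus $\partial_u Q_f\neq 0$ is equivalent to $\alpha\neq 0$, i.e.\ to $w^s$ not being parallel to $w^u$, i.e.\ to transversality of the two curves in $\Sigma$, which---both manifolds being two-dimensional in $\mathbb{R}^3$ and sharing the one-dimensional tangent $\gamma_f'$ along $\gamma_f$---is equivalent to $W^s_{\boldsymbol{\alpha}_0}(K_{2,0})$ and $W^u_{\boldsymbol{\alpha}_0}(K_{1,0})$ crossing transversely along $\gamma_f$. Lemma \ref{Lemma Melnnonvan} with $j=f$ gives $\partial_u Q_f\neq 0$ for $r\in(\tfrac{2}{3},\beta)\cup(r_0,+\infty)$, which proves item 2.

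Item 1 follows by the identical argument with $f$ replaced by $b$: the fast jump $\gamma_b$ lies in the layer $\{u=u_b(r)\}$, the section and splitting direction along the back (defined analogously to $\Sigma$ and $e_f$ in (R\ref{defQfQb})) replace $\Sigma$ and $e_f$, and the construction of $Q_b$ yields the same equivalence $\partial_u Q_b\neq 0 \Leftrightarrow W^s_{\boldsymbol{\alpha}_0}(K_{1,0})$ and $W^u_{\boldsymbol{\alpha}_0}(K_{2,0})$ transverse along $\gamma_b$; Lemma \ref{Lemma Melnnonvan} with $j=b$ then closes the case. The main obstacle is step (ii): extracting the equivalence ``$\partial_u Q_j\neq0\Leftrightarrow$ transversality'' cleanly from the somewhat intricate construction of $Q_j$ in (R\ref{defQfQb}), in particular verifying that $u$ is a regular parameter on the intersection curves (where normal hyperbolicity (R\ref{K10K20}) and the fibration (R\ref{EQ 3.19}) enter) and keeping straight which direction in $\Sigma$ plays which role; alternatively, one may simply invoke this as the standard Melnikov--transversality equivalence used in \cite{Sandstede98} and in the Melnikov references cited in (R\ref{defQfQb}).
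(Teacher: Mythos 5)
Your proof is correct and takes the same approach as the paper: reduce the corollary to Lemma \ref{Lemma Melnnonvan} via the standard ``$\partial_u Q_j\neq 0\Leftrightarrow$ transversality'' dictionary. In fact the paper supplies no proof at all for this corollary --- it is simply asserted in the sentence preceding it as an immediate consequence of \eqref{in123123} --- so your geometric unpacking of that equivalence (reducing to transversality of the two intersection curves in $\Sigma$, identifying $\partial_u Q_f$ with the $e_f$-component of $w^s$ in the $\{e_f,w^u\}$ basis) is more detail than the paper provides, and is consistent with it.
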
}
 Now we apply the strong $\lambda$-lemma (cf. \cite{BoDeng88}) to these manifolds, which shows that the strong inclination conditions \eqref{cond1} and \eqref{cond2} are met for $\epsilon=0$. The extension of this result to an interval $(0,\epsilon_0(r))$ with $\epsilon_0(r)>0$ sufficiently small follows directly from the robustness of the strong inclination property.
 \textcolor{black}{This step is key to extend the domain of application after having shown the property for one specific value of $\epsilon$}. This shows that the heteroclinic loop established in Section \ref{existenceheteroclinicloop} is non-degenerate for $r \in (\frac{2}{3},\beta)\cup(r_0,+\infty)$ and $\epsilon>0$ taken sufficiently small. 
\end{enumerate}

\subsection{Hypothesis \texorpdfstring{$H_4$}{Lg}: Linear independency of the Melnikov integrals \texorpdfstring{$\nabla Q_f$ \& $\nabla Q_b$}{Lg} and convergence of \texorpdfstring{$\gamma_j^\epsilon(\xi)$ \& $\psi_j(\xi)$}{Lg} for \texorpdfstring{$j\in\{f,b\}$}{Lg}}
We need to show that $\nabla Q_f$ and $\nabla Q_b$ are linearly independent, where the gradient operator $\nabla$ is taken with respect to the relevant parameters $\mu$ and $D$, hence:
\begin{equation}
\nabla Q_f, \nabla Q_b \in \mathbb{R}^2.
\end{equation}
\begin{theorem}
    $\nabla Q_f$ and $\nabla Q_b$ are linearly independent (and in particular non-zero).
    \label{linind}
\end{theorem}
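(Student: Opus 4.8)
The plan is to identify $\nabla Q_f$ and $\nabla Q_b$ with the two rows of the Jacobi matrix $J(\boldsymbol{\alpha}_0(r);r)$ introduced in (R\ref{invjacmat}), and to reduce the assertion to the non-vanishing of $\widehat{M}(r)$, which has already been dealt with. First I would recall that two vectors in $\mathbb{R}^2$ are linearly independent exactly when the $2\times 2$ matrix having them as its rows is invertible, i.e. has non-zero determinant. Taking $\nabla$ with respect to the parameters in the order $(D,\mu)$, this matrix is precisely
\[
J(\boldsymbol{\alpha}_0(r);r)=\begin{pmatrix}\partial_D Q_f & \partial_\mu Q_f\\[2pt] \partial_D Q_b & \partial_\mu Q_b\end{pmatrix},
\]
all partial derivatives being evaluated at $\boldsymbol{\alpha}_0(r)$, so the claim amounts to $\det J(\boldsymbol{\alpha}_0(r);r)\neq 0$.

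Next I would record, from (R\ref{melintmud}), that $\partial_\mu Q_f(\boldsymbol{\alpha}_0(r);r)$ and $\partial_\mu Q_b(\boldsymbol{\alpha}_0(r);r)$ equal the strictly positive prefactor $q_{j,+}(r)^2(r+0.1)/D_0(r)^2$ times $\widehat{M}_f(r)<0$ and $\widehat{M}_b(r)<0$ respectively (see \eqref{Mhatf} and \eqref{Mhatb}); in particular both are non-zero. A one-line computation starting from the defining formula \eqref{Mhatde} for $\widehat{M}(r)$ then gives
\[
\det J(\boldsymbol{\alpha}_0(r);r) = -\,\widehat{M}(r)\,\partial_\mu Q_f(\boldsymbol{\alpha}_0(r);r)\,\partial_\mu Q_b(\boldsymbol{\alpha}_0(r);r),
\]
so that $\det J(\boldsymbol{\alpha}_0(r);r)\neq 0$ if and only if $\widehat{M}(r)\neq 0$ — which is exactly the invertibility criterion already stated in (R\ref{invjacmat}).

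Finally I would invoke Theorem \ref{Mhatinftynon0}, which guarantees $\widehat{M}(r)\neq 0$ for all $r>r_0$ (together with the numerical verification discussed after it, which shows $\widehat{M}(r)\neq 0$ for all $r>\tfrac23$), to conclude that $\det J(\boldsymbol{\alpha}_0(r);r)\neq 0$ on the relevant range of $r$, and hence that $\nabla Q_f$ and $\nabla Q_b$ are linearly independent. The parenthetical "and in particular non-zero" is then automatic, since the zero vector is linearly dependent with every vector.

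There is essentially no analytic obstacle in this step: the genuine work — the explicit Melnikov computations of (R\ref{melintmud}) and the sign analysis leading to $\widehat{M}(r)\neq 0$ — has already been carried out. The only point that needs a little care is that $\widehat{M}(r)$ is defined as a difference of ratios, so one must first note $\partial_\mu Q_f,\partial_\mu Q_b\neq 0$ in order to pass cleanly between "$\widehat{M}(r)\neq 0$" and "$\det J(\boldsymbol{\alpha}_0(r);r)\neq 0$".
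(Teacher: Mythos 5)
Your proposal is correct and takes essentially the same route as the paper: both reduce linear independence of $\nabla Q_f,\nabla Q_b$ to the non-vanishing of $\det J(\boldsymbol{\alpha}_0(r);r)$, and then to $\widehat{M}(r)\neq 0$ (the paper phrases it as a proof by contradiction, but the content is identical). The only point where you go slightly beyond the paper is in explicitly noting that $\partial_\mu Q_f,\partial_\mu Q_b\neq 0$ is needed to pass between $\det J\neq 0$ and $\widehat{M}(r)\neq 0$, since $\widehat{M}$ is defined as a difference of ratios — a small but welcome bit of care.
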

\begin{proof}
One can derive (cf. \cite{Holmes1980}, \cite{Robinson77}, \cite{Melnikov63}) the following expression for the derivatives of $Q_j$, $j\in \{f,b\}$, with respect to $i\in \{\mu,D\}$ leading to the Melnikov integrals:
\begin{equation}
    \frac{\partial Q_j}{\partial i}(\boldsymbol{\alpha}_0(r);r)=-\int_{- \infty}^{+ \infty}\psi_j(\xi;r)\cdot \frac{\partial \mathcal{F}}{\partial i}(X_j(\xi);\boldsymbol{\alpha}_0(r);r)d\xi
    \label{Mint}
\end{equation}
with 
\begin{equation}
    \partial_\xi X=\mathcal{F}(X;\boldsymbol{\alpha},r) 
    \label{abstract form}
\end{equation}
the dynamical system \eqref{eqdiff} written in abstract form, where $\mathcal{F}:\mathbb{R}^3 \times \mathcal{V}\times (\frac{2}{3},+ \infty)\longrightarrow \mathbb{R}^3$ and $\mathcal{V} \subset \mathbb{R}^3$ is a small neighborhood of $\boldsymbol{\alpha}_0$. The function $\psi_j=\psi_j(\xi;r)$ denotes the solution of the so-called $\textit{adjoint variational equation}$:
\begin{equation}
    \partial_\xi \psi=- \Big(\partial_X\mathcal{F}(X_j(\xi);\boldsymbol{\alpha}_0(r);r)\Big)^\top \psi
    \label{adjvareq}
\end{equation}
about the heteroclinic $X_j(\xi)$ with initial condition at $\xi=0$:
\begin{equation}
    \psi_j(0;r)=(s_j'(0;r),-q_j'(0;r),0)=:-e_j.
    \label{initcond}
\end{equation}

Assume that $\nabla Q_f=\begin{pmatrix}
\frac{\partial Q_f}{\partial D}(\boldsymbol{\alpha}_0(r);r)\\
\frac{\partial Q_f}{\partial \mu}(\boldsymbol{\alpha}_0(r);r) \\
\end{pmatrix}$ and $\nabla Q_b=\begin{pmatrix}
\frac{\partial Q_b}{\partial D}(\boldsymbol{\alpha}_0(r);r)\\
\frac{\partial Q_b}{\partial \mu}(\boldsymbol{\alpha}_0(r);r) \\
\end{pmatrix}$ are linearly dependent for some $r>\frac{2}{3}$. Then the determinant of the corresponding $2 \times 2$ matrix with columns $\nabla Q_f$ and $\nabla Q_b$ would vanish,
which implies 
\begin{equation}
    \widehat{M}(r)=0.
\end{equation}
However, this contradicts the non-zeroness condition for $\widehat{M}$ which holds for all $r>\frac{2}{3}$ as shown in Section \ref{existenceheteroclinicloop}, and $\nabla Q_f$ and $\nabla Q_b$ are linearly independent. 
\end{proof}
Now we look at the convergence of $\psi_j$ and $\gamma_j^\epsilon$ and prove the following results:
\begin{theorem}
     For $j = f, b$, $\psi_j(\xi)$ and $\gamma_j^\epsilon(\xi)$ converge \textcolor{black}{as $\xi \longrightarrow +\infty$} to zero and the equilibria respectively in the Reynolds number regime $r\in (\frac{2}{3},\beta)$ with $\beta\approx 0.72946$. $\psi_j$ denotes the solution of the adjoint variational equation \eqref{adjvareq} with initial condition \eqref{initcond} as introduced in the proof of Theorem \ref{linind}.
\end{theorem}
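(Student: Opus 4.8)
The statement splits into two essentially independent claims, and the first is only bookkeeping. By (R\ref{L3.7}) (already used in Section~\ref{existenceheteroclinicloop}) the orbit $\gamma_f^\epsilon$ is a heteroclinic connection from $X_1$ to $X_2$ and $\gamma_b^\epsilon$ one from $X_2$ to $X_1$, so \eqref{limit1}--\eqref{limit2} immediately give $\gamma_f^\epsilon(\xi)\to X_2$ and $\gamma_b^\epsilon(\xi)\to X_1$ as $\xi\to+\infty$; since $X_1,X_2$ are hyperbolic for small $\epsilon>0$ (Theorem~\ref{hyperbolicity}, Corollary~\ref{Corcondspec}), this convergence is in fact exponential, with $\gamma_j^\epsilon$ entering the respective equilibrium tangent to its local stable manifold. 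So all the work lies in showing $\psi_j(\xi)\to 0$, and the plan there is to view \eqref{adjvareq} as an asymptotically autonomous linear system and to locate $\psi_j$ inside the exponentially decaying subspace at $+\infty$.

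First I would pin down the $\omega$-limit of the heteroclinic $X_j(\xi)$ underlying \eqref{adjvareq}: from the explicit formulas in (R\ref{R1}) and (R\ref{S3.5.1}), $q_f(\xi;r)=q_{f,+}(r)\phi(q_{f,+}(r)\sqrt{(r+0.1)/D}\,\xi)\to q_{f,+}(r)$ and $q_b(\xi;r)\to 0$ as $\xi\to+\infty$ (with the complementary components tending to $0$), so $X_j(\xi)$ converges exponentially to a hyperbolic saddle of the fast subsystem \eqref{fastss}, hyperbolicity in the layer being (R\ref{L3.3}). Consequently the coefficient matrix $-(\partial_X\mathcal F(X_j(\xi);\boldsymbol\alpha_0(r);r))^\top$ of \eqref{adjvareq} converges exponentially to a constant matrix whose eigenvalues are, up to sign, those of the linearization at the limit point; by Corollary~\ref{Corcondspec} and the block structure of \eqref{eqdiff} this limiting matrix has a one-dimensional stable eigenspace, dual to the principal unstable direction $e_3$. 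Roughness of exponential dichotomies on $[0,+\infty)$ then produces a one-dimensional subspace $\mathcal S_j^+\subset\mathbb R^3$ of initial data for \eqref{adjvareq} whose solutions decay exponentially as $\xi\to+\infty$, and it remains to check that $\psi_j(0)=-e_j$ (cf.\ \eqref{initcond}) lies in $\mathcal S_j^+$.

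Two ingredients enter the last step. On the one hand, the first two components of $\psi_j$ solve the planar adjoint variational equation of the fast flow inside the layer $u=u_k$ ($u_1=2$, $u_2=u_b(r)$), which integrates explicitly by the classical Melnikov construction: because the divergence of that planar vector field is the constant $(\mu_0(r)+u_k)/D_0(r)$, one obtains $(\psi_{j,1}(\xi),\psi_{j,2}(\xi))$ equal, with the normalization fixed by \eqref{initcond}, to $e^{-\frac{\mu_0(r)+u_k}{D_0(r)}\xi}\,(s_j'(\xi;r),-q_j'(\xi;r))$. On $r\in(\tfrac23,\beta)$ one has $\mu_0(r)+u_k>0$ — using $\mu_0(r)\in(-\tfrac85,\tfrac1{66}(3\sqrt{115}-65))$ from (R\ref{T2.2}) together with $u_1=2$ and $u_2=u_b(r)>\tfrac65$ from (R\ref{R1}) — and $D_0(r)$ is bounded away from $0$ there, so the prefactor decays; combined with the exponential decay of $q_j',s_j'$ (immediate from $\phi$), the first two components of $\psi_j$ tend to $0$ exponentially. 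On the other hand, to exclude $\psi_j$ carrying a component along the non-decaying directions of \eqref{adjvareq} at $+\infty$, I would invoke the non-vanishing of the $u$-Melnikov integral $\partial_u Q_j(u_j(r),D_0(r),\mu_0(r);r)\neq 0$ from Lemma~\ref{Lemma Melnnonvan}: this transversality identifies $\psi_j$ as the (suitably normalized) solution of \eqref{adjvareq} that stays bounded on $\mathbb R$, and boundedness together with the asymptotic hyperbolicity of Corollary~\ref{Corcondspec} forces $\psi_j(\xi)\in\mathcal S_j^+$, hence $\psi_j(\xi)\to 0$ exponentially. The argument along the back ($j=b$) is identical with $u_k$ replaced by $u_b(r)$.

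The genuine obstacle I expect is exactly this identification: controlling the $u$-direction, which is only neutral in the singular limit $\epsilon=0$, and verifying that the precise initial condition \eqref{initcond} does not pick up a component along the non-decaying directions of \eqref{adjvareq}. The sign condition $\mu_0(r)+u_k>0$, which holds precisely on $(\tfrac23,\beta)$, and the non-degeneracy provided by Lemma~\ref{Lemma Melnnonvan} are what make this step work, which is also the reason the statement is restricted to that Reynolds-number window; extending it to larger $r$ would require the interval-arithmetic estimates already mentioned for the sign of the various Melnikov integrals.
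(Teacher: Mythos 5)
Your handling of $\gamma_j^\epsilon(\xi)\to X_2, X_1$ is the same as the paper's: both simply invoke \eqref{limit1}--\eqref{limit2}. For $\psi_j\to 0$ the paper's proof is essentially one step: it takes the explicit formula \eqref{expressionpsifb} from \cite{EKR2022} (with vanishing third component), observes that the vector factors $(\dot s_j,-s_j,0)$ are bounded because they are built from bounded orbits, and reads off the exponential decay of the scalar prefactor from the sign conditions $\mu_0(r)+2>0$ (all $r>\tfrac23$, for $\psi_f$) and $\mu_0(r)+u_b(r)>0$ (only on $(\tfrac23,\beta)$, for $\psi_b$). Your divergence/Abel derivation of the planar part and your use of the same two sign conditions reproduce exactly this core mechanism, so up to that point your route and the paper's coincide in substance.

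The additional dichotomy scaffold you wrap around this is, however, not sound, and it would not survive scrutiny. You invoke ``asymptotic hyperbolicity from Corollary~\ref{Corcondspec}'' and ``roughness of exponential dichotomies'' for \eqref{adjvareq}, and claim that boundedness on $\mathbb R$ together with this hyperbolicity forces $\psi_j(\xi)$ into a one-dimensional decaying subspace $\mathcal S_j^+$. But \eqref{adjvareq} is posed at $\boldsymbol\alpha_0(r)=(D_0(r),\mu_0(r),0)$, i.e.\ at $\epsilon=0$: the reference orbit $X_j(\xi)$ is a singular fast front or back of \eqref{fastss}, and its $\omega$-limit is hyperbolic only \emph{within} its layer; the slow $u$-direction carries a zero eigenvalue since the third row of $\partial_X\mathcal F$ vanishes at $\epsilon=0$. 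The limiting coefficient matrix of \eqref{adjvareq} therefore is not hyperbolic, there is no one-dimensional exponentially decaying subspace at $+\infty$ produced by an exponential dichotomy, and ``bounded on $\mathbb R$ implies exponential decay'' fails. Corollary~\ref{Corcondspec} concerns the $\epsilon>0$ system and does not transfer to \eqref{adjvareq}. Relatedly, Lemma~\ref{Lemma Melnnonvan} asserts the non-vanishing of the splitting integral $\partial_u Q_j$, i.e.\ a transversality statement for the slow foliations of the stable and unstable manifolds; it does not identify or normalize a bounded solution of the adjoint variational equation, so the step where you use it to ``locate'' $\psi_j$ inside $\mathcal S_j^+$ is a non-sequitur. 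The paper sidesteps all of this by taking \eqref{expressionpsifb} as given and arguing purely from the exponential prefactor; your proposal should drop the dichotomy argument and do the same.
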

\begin{proof}
        According to equations \eqref{limit1} and \eqref{limit2}, $\gamma_f^\epsilon(\xi) \underset{\xi \longrightarrow \pm \infty}{\longrightarrow} X_{2,1}$ and $\gamma_b^\epsilon(\xi) \underset{\xi \longrightarrow \pm \infty}{\longrightarrow} X_{1,2}$ for every $r>\frac{2}{3}$. To show the convergence to 0 of $\psi_j$ we use the following explicit expressions for $r>\frac{2}{3}$, which can be readily checked following \cite{EKR2022}   
        \begin{equation}
            \psi_f(\xi;r)=e^{-\frac{\mu_0(r)+2}{D_0(r)}\xi}(\dot{s}_f(\xi),-s_f(\xi;r),0),\,\,\,\,\,\,\,\,\psi_b(\xi;r)=e^{-\frac{\mu_0(r)+u_b(r)}{D_0(r)}\xi}(\dot{s}_b(\xi),-s_b(\xi;r),0).
            \label{expressionpsifb}      
        \end{equation}
      
    Bounded orbits $(\widetilde{q}(\xi),\widetilde{s}(\xi),\widetilde{u}(\xi))$ in the dynamical system \eqref{eqdiff} directly correspond to traveling-wave solutions to the Barkley model \eqref{primary}. Hence, it follows that the $3$-dimensional vectors $(\dot{s}_f(\xi),-s_f(\xi;r),0)$ and $(\dot{s}_b(\xi),-s_b(\xi;r),0)$ in \eqref{expressionpsifb} are bounded. Note that the function $f$, which appears in the expression of $\dot{s}$ (cf. \eqref{eqdiff}): 
    \begin{equation}
        f(q,u;r)=q(r+u-2-(r+0.1)(q-1)^2),
    \end{equation}
    is bounded for fixed $r$, since $q$ and $u$ are bounded as components of bounded orbits. \\
    \newline
For the asymptotical behaviour of $\psi_f$ as $\xi \longrightarrow + \infty$, we observe that since $\mu_0(r)>-\frac{8}{5}$ (see R(\ref{T2.2})) and $D_0(r)>0$, the quotient $\frac{\mu_0(r)+2}{D_0(r)}$ is positive and hence for $r>\frac{2}{3}$
    \begin{equation}
         e^{-\frac{\mu_0(r)+2}{D_0(r)}\xi} \underset{\xi \longrightarrow + \infty}{\longrightarrow} 0.
    \end{equation}
    Together with the boundedness of $(\dot{s}_f(\xi),-s_f(\xi;r),0)$ it follows immediately
    \begin{equation}
       \forall \,\, r>\frac{2}{3}: \lim_{\xi \longrightarrow +\infty} \psi_f(\xi;r)=0.
    \end{equation}
    Now for the convergence to 0 of $\psi_b$ as $\xi \longrightarrow + \infty$, we need to have according to \eqref{expressionpsifb} (since $D_0(r)>0$)
    \begin{equation} 
        \mu_0(r)+u_b(r)>0,
        \label{cond}
    \end{equation} 
    which is not necessarily the case for all values of $r>\frac{2}{3}$, since in general we only have $\mu_0(r)>-\frac{8}{5}$ and $u_b(r)>\frac{6}{5}$, see \textcolor{black}{(R\ref{R1})}. However, from the numerical computation of $\mu_0=\mu_0(r)$ in \cite{EKR2022} 2.2 we derive that condition \eqref{cond} is fulfilled for $r \in (\frac{2}{3}, \beta)$, since then $\mu_0(r)>-\frac{6}{5}$.\\
    Hence, in the relevant parameter range $r \in (\frac{2}{3}, \beta)$ we have, similarly to above, that the quotient $\frac{\mu_0(r)+u_b(r)}{D_0(r)}$ is positive and hence
    \begin{equation}
        e^{-\frac{\mu_0(r)+u_b(r)}{D_0(r)}\xi} \underset{\xi \longrightarrow + \infty}{\longrightarrow} 0.
    \end{equation}
    Again, together with the boundedness argument -- this time for the 3-dimensional vector $(\dot{s}_b(\xi),-s_b(\xi;r),0)$ -- it yields:
    \begin{equation}
        \forall\,\,r \in (\frac{2}{3}, \beta): \lim_{\xi \longrightarrow +\infty} \psi_b(\xi;r)=0,
    \end{equation}
    which finishes the proof; as \textcolor{black}{shown in Section \ref{existenceheteroclinicloop}} we emphasize that some explicit integrals have been evaluated numerically in the proof but direct interval arithmetic would validate the signs of these integrals easily as the formulas are fully explicit.
\end{proof}

\subsection{Hypothesis \texorpdfstring{$H_5$}{Lg}: Non-vanishing limits and twist of both heteroclinic orbits \texorpdfstring{$\gamma_f^\epsilon$}{Lg} and \texorpdfstring{$\gamma_b^\epsilon$}{Lg}}
\label{Hypothesis H5}
The first part of Hypothesis ($H_5$) in \cite{Sandstede98} supposes the strong inclination property, as the orthogonality relations $\psi_f(\xi) \perp T_f$ and $\psi_b(\xi) \perp T_b$ are fulfilled, with $T_f$ and $T_b$ the sums of tangent spaces: 
\begin{equation}
    T_f:=T_{\gamma_f^\epsilon(\xi)}W^u_{\alpha(\epsilon;r)}(X_1)+T_{\gamma_f^\epsilon(\xi)}W_{\alpha(\epsilon;r)}^s(X_2),
\end{equation}
\begin{equation}
    T_b:=T_{\gamma_b^\epsilon(\xi)}W_{\alpha(\epsilon;r)}^u(X_2)+T_{\gamma_b^\epsilon(\xi)}W_{\alpha(\epsilon;r)}^s(X_1).
\end{equation}
 As we showed in Section \ref{H3} when proving the non-degeneracy of the heteroclinic loop, the strong inclination conditions are met for $r\in(\frac{2}{3},\beta) \cup (r_0, +\infty)$ with $\beta$  as in equation $\eqref{beta}$ and $r_0$ as in Lemma \eqref{Lemma Melnnonvan}.\\

The second part of Hypothesis ($H_5$) in \cite{Sandstede98} assumes that both heteroclinic orbits $\gamma_f^\epsilon$ and $\gamma_b^\epsilon$ are twisted. The double twisted regime occurs in the Barkley model \eqref{primary} for intermediate Reynolds numbers cf. \textcolor{black}{(R\ref{L3.7})}. More precisely,
for $r \in (\frac{2}{3},\beta)$ we have the following behaviour: 
\begin{enumerate}
    \item The heteroclinic front $\gamma_f^\epsilon$ is twisted. Hence, the principal eigenvectors $e_2(X_1)$ and $e_3(X_2)$ point to opposite sides of the tangent space $T_{\gamma_f^\epsilon(\xi)}W_{\alpha(\epsilon;r)}^s(X_2)$ as $\xi \longrightarrow -\infty$ and $\xi \longrightarrow +\infty$, respectively. 
    \item The heteroclinic back $\gamma_b^\epsilon$ is also twisted. Hence, $e_2(X_2)$ and $e_3(X_1)$ point to opposite sides of $T_{\gamma^\epsilon_b(\xi)} W_{\alpha(\epsilon;r)}^s(X_1)$ as $\xi \longrightarrow -\infty$ and $\xi \longrightarrow +\infty$, respectively.
\end{enumerate}
The twist of the heteroclinic back $\gamma^\epsilon_b$ follows directly from the inequality:
\begin{equation}
    \frac{\partial Q_b}{\partial u}(\boldsymbol{\alpha}_0(r);r)<0,
\end{equation}
which holds for all $r>\frac{2}{3}$ (cf. equation \eqref{Qbu}), hence the heteroclinic back is twisted for all these Reynolds number regimes. Similarly, for the heteroclinic front $\gamma^\epsilon_f$, the twist is a direct consequence of:
\begin{equation}
    \frac{\partial Q_f}{\partial u}(\boldsymbol{\alpha}_0(r);r)>0,
\end{equation}
which is satisfied only for $r \in (\frac{2}{3},\beta)$ (cf. equation \eqref{derdef1} and Section \ref{H3}). Consequently, a double twisted regime shows up for $r \in (\frac{2}{3}, \beta)$ and we only have a single twisted regime for $r \in (\beta, +\infty)$.

\subsection{Hypothesis \texorpdfstring{$H_6$}{Lg}: Positivity of the scalar products \texorpdfstring{$\langle w_j^\pm,v_j^\pm\rangle$}{Lg} for \texorpdfstring{$j\in\{f,b\}$}{Lg}}
\label{last section}
\textcolor{black}{We denote as follows the following limits}
\begin{equation}
    v_j^-:=\lim_{\xi \longrightarrow -\infty}e^{-\lambda_3(X_{i_j})\xi}\dot{\gamma}_j(\xi),\,\,\,\,\,\,v_{k}^+:=\lim_{\xi \longrightarrow +\infty}e^{-\lambda_2(X_{i_k})\xi}\dot{\gamma}_j(\xi),
    \label{firstlimit}
\end{equation}

\begin{equation}
    w_j^+:=\lim_{\xi \longrightarrow -\infty}e^{\lambda_2(X_{i_j})\xi}\psi_j(\xi),\,\,\,\,\,\, w_{k}^-:=\lim_{\xi \longrightarrow +\infty}e^{\lambda_3(X_{i_k})\xi}\psi_j(\xi)
    \label{defwk}
\end{equation}

for \textcolor{black}{$j=f$ and $k=b$ (or vice-versa $j=b$ and $k=f$), with}  $i_f=1$, $i_b=2$. $\psi_j$ denote the bounded solutions of the adjoint variational equation \eqref{adjvareq} with initial condition \eqref{initcond}. 
In this section, we show that the scalar products $\langle w_j^-,v_j^-\rangle$ and $\langle w_j^+,v_j^+\rangle$ are positive for $\epsilon>0$ sufficiently small and $j\in\{f,b\}$. We will explicitly verify the positivity assumption under the conditions of Lemma 3.3 in \cite{EKR2022}:
\begin{equation}
    2+\mu=\frac{1}{2}\sqrt{2D(r+0.1)}(q_{f,+}(r)-2q_{f,-}(r)),
    \label{Con1}
\end{equation}
\begin{equation}
    u_b(r)+\mu=-\frac{1}{2}\sqrt{2D(r+0.1)}(q_{b,+}(r)-2q_{b,-}(r)),
    \label{Con2}
\end{equation}
\textcolor{black}{which are just required matching conditions for the existence of a heteroclinic front and a heteroclinic back.}
 \begin{figure}[H]
    \centering
    \includegraphics[width=14cm]{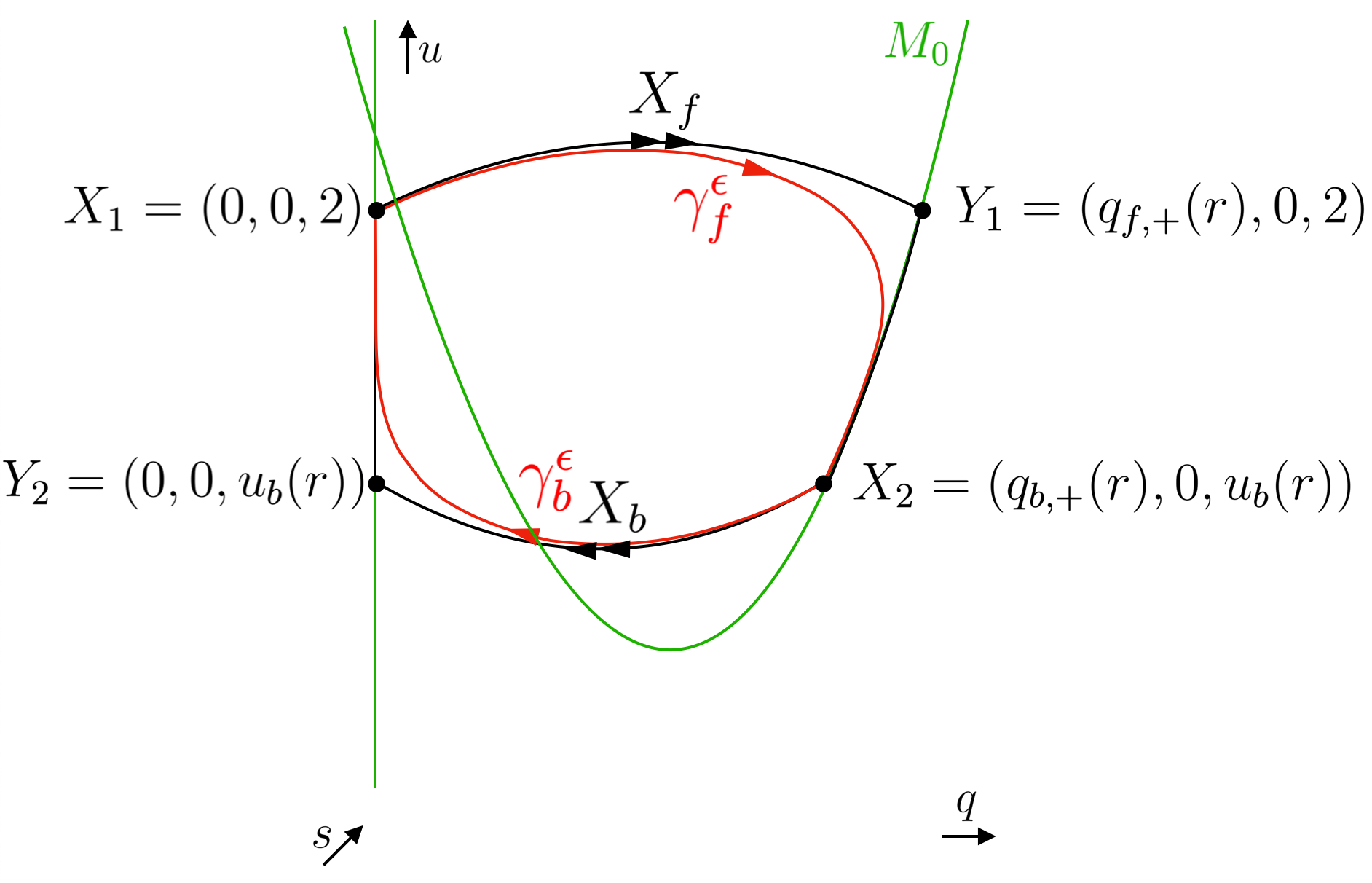}\\[3mm] 
    \caption{Representation of the singular heteroclinic loop (black) and actual heteroclinic connections (red) between the equilibria $X_1$ and $X_2$. The singular heteroclinic loop is made up of the heteroclinic connections $X_f$ and $X_b$ in the fast subsystem \eqref{fastss} and orbit segments lying on the manifold $M_0$ in the slow subsystem \eqref{slowsubsys}.}
    \label{fig: Singloop}
    \end{figure}
 Under conditions \eqref{Con1} and \eqref{Con2} the fast subsystem \eqref{fastss} (i.e. $\epsilon=0$) has for $r>\frac{2}{3}$ a heteroclinic front solution within the layer $u=2$, and a heteroclinic back solution within the layer $u=u_b(r)$, cf. \cite{EKR2022}:
\begin{equation}
X_f(\xi)=(q_f(\xi;r),s_f(\xi;r),u_f(r))=\Big(q_{f,+}(r)\phi\Big(q_{f,+}(r)\sqrt{\frac{r+0.1}{D}}\xi\Big),\dot{q}_f(\xi;r),2\Big)
\label{gammafxi}
\end{equation}
\begin{equation}
X_b(\xi)=(q_b(\xi;r),s_b(\xi;r),u_b(r))=\Big(q_{b,+}(r)\phi\Big(-q_{b,+}(r)\sqrt{\frac{r+0.1}{D}}\xi\Big),\dot{q}_b(\xi;r),u_b(r)\Big).
\label{la}
\end{equation}
$X_f$ connects the hyperbolic saddles $X_1=(0,0,2)$ with $Y_1=(q_{f,+}(r),0,2)$, and $X_b$ the hyperbolic saddles $X_2=(q_{b,+}(r),0,u_b(r))$ with $Y_2=(0,0,u_b(r))$ (cf. Figure \eqref{fig: Singloop}).\\

Note that no actual heteroclinic connections between $X_1$ and $X_2$ are contained in the singular heteroclinic loop, even for $\epsilon=0$. Indeed, the sharpness of the edges at $Y_1$ and $Y_2$ (cf. Figure \eqref{fig: Singloop}) is incompatible with the smoothness of heteroclinic connections. However, the existence of an actual heteroclinic loop -- consisting of $\gamma_f^\epsilon$ and $\gamma_b^\epsilon$ and connecting $X_1$ and $X_2$ --, which lies in the vicinity of the singular one, can be proved for $\epsilon>0$ sufficiently small, cf. \cite{EKR2022} Section 3.4. \\ \\
\textcolor{black}{In the next steps, we will need the expressions for some derivatives, which we summarize in the following lemma. \begin{lemma}
The first and second derivatives with respect to $\xi$ of $q_f(\xi;r)$ and $q_b(\xi;r)$ are given by:
\begin{enumerate}
    \item $\forall\,\,\xi \in \mathbb{R}, \forall\,\,r>\frac{2}{3}: s_f(\xi;r)=\frac{1}{2}q_{f,+}(r)^2 \sqrt{\frac{2(r+0.1)}{D}}\frac{e^{-\frac{1}{2}\sqrt{2}q_{f,+}(r)\sqrt{\frac{r+0.1}{D}}\xi}}{\Big(1+e^{-\frac{1}{2}\sqrt{2}q_{f,+}(r)\sqrt{\frac{r+0.1}{D}}\xi}\Big)^2}$
    \item $\forall\,\,\xi \in \mathbb{R}, \forall\,\,r>\frac{2}{3}: \dot{s}_f(\xi;r)=\frac{q_{f,+}(r)^3}{2}\frac{r+0.1}{D}e^{-\frac{1}{2}\sqrt{2}q_{f,+}(r)\sqrt{\frac{r+0.1}{D}}\xi} \frac{e^{-\frac{1}{2}\sqrt{2}q_{f,+}(r)\sqrt{\frac{r+0.1}{D}}\xi}-1}{\Big(1+e^{-\frac{1}{2}\sqrt{2}q_{f,+}(r)\sqrt{\frac{r+0.1}{D}}\xi}\Big)^3}$
    \item $\forall\,\,\xi \in \mathbb{R}, \forall\,\,r>\frac{2}{3}: s_b(\xi;r)= -\frac{1}{2}q_{b,+}(r)^2\sqrt{\frac{2(r+0.1)}{D}}\frac{e^{\frac{1}{2}\sqrt{2}q_{b,+}(r)\sqrt{\frac{r+0.1}{D}}\xi}}{\Big(1+e^{\frac{1}{2}\sqrt{2}q_{b,+}(r)\sqrt{\frac{r+0.1}{D}}\xi}\Big)^2}$
    \item $\forall\,\,\xi \in \mathbb{R}, \forall\,\,r>\frac{2}{3}: \dot{s}_b(\xi;r)=\frac{q_{b,+}(r)^3}{2}\frac{r+0.1}{D}e^{\frac{1}{2}\sqrt{2}q_{b,+}(r)\sqrt{\frac{r+0.1}{D}}\xi}\frac{e^{\frac{1}{2}\sqrt{2}q_{b,+}(r)\sqrt{\frac{r+0.1}{D}}\xi}-1}{\Big(1+e^{\frac{1}{2}\sqrt{2}q_{b,+}(r)\sqrt{\frac{r+0.1}{D}}\xi}\Big)^3}$.
\end{enumerate}
\label{Lemma derivatives}
\end{lemma}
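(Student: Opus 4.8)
The plan is to reduce every formula to a single differentiation of the sigmoid $\phi$ followed by the chain rule, so that the whole content is bookkeeping of constants and signs. First I would record the two elementary identities obtained by differentiating $\phi(\chi)=\big(1+e^{-\frac{\sqrt 2}{2}\chi}\big)^{-1}$ directly:
\begin{equation}
\phi'(\chi)=\frac{\sqrt 2}{2}\,\frac{e^{-\frac{\sqrt 2}{2}\chi}}{\big(1+e^{-\frac{\sqrt 2}{2}\chi}\big)^{2}},\qquad
\phi''(\chi)=\frac12\,e^{-\frac{\sqrt 2}{2}\chi}\,\frac{e^{-\frac{\sqrt 2}{2}\chi}-1}{\big(1+e^{-\frac{\sqrt 2}{2}\chi}\big)^{3}},
\end{equation}
where the second one uses $(\tfrac{\sqrt 2}{2})^2=\tfrac12$ and a quotient-rule simplification that collapses the numerator of $\phi''$ to $e^{-\frac{\sqrt 2}{2}\chi}(e^{-\frac{\sqrt 2}{2}\chi}-1)$.

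For the front, set $a_f:=q_{f,+}(r)\sqrt{\tfrac{r+0.1}{D}}>0$ (well-defined since $D>0$ and $q_{f,+}(r)>1$ for $r>\tfrac23$ by \eqref{expqfpm}), so that $q_f(\xi;r)=q_{f,+}(r)\,\phi(a_f\xi)$. Then $s_f=\dot q_f=q_{f,+}(r)\,a_f\,\phi'(a_f\xi)$ and $\dot s_f=\ddot q_f=q_{f,+}(r)\,a_f^{2}\,\phi''(a_f\xi)$. Substituting the identities above, using $q_{f,+}(r)\,a_f\cdot\tfrac{\sqrt 2}{2}=\tfrac12 q_{f,+}(r)^{2}\sqrt{\tfrac{2(r+0.1)}{D}}$ and $q_{f,+}(r)\,a_f^{2}=q_{f,+}(r)^{3}\tfrac{r+0.1}{D}$, together with $\tfrac{\sqrt 2}{2}a_f\xi=\tfrac12\sqrt 2\,q_{f,+}(r)\sqrt{\tfrac{r+0.1}{D}}\,\xi$, yields items 1 and 2 verbatim.

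For the back, $q_b(\xi;r)=q_{b,+}(r)\,\phi(-a_b\xi)$ with $a_b:=q_{b,+}(r)\sqrt{\tfrac{r+0.1}{D}}>0$. The chain rule now produces an extra factor $-1$ in the first derivative, $s_b=\dot q_b=-q_{b,+}(r)\,a_b\,\phi'(-a_b\xi)$, which is precisely the sign in item 3, whereas the second derivative carries $(-1)^2=1$, giving the positive prefactor in item 4; and since the argument of $\phi,\phi',\phi''$ is $-a_b\xi$, every occurrence of $e^{-\frac{\sqrt 2}{2}\chi}$ becomes $e^{+\frac{\sqrt 2}{2}a_b\xi}$, which is why the exponentials in items 3 and 4 appear with the opposite sign to those in items 1 and 2. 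Collecting constants exactly as for the front finishes the verification.

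The only place that requires any care — and hence the ``main obstacle'', such as it is — is this sign bookkeeping for the back: tracking simultaneously the chain-rule sign from the inner map $\xi\mapsto-a_b\xi$ and the sign flip it induces inside every exponential, and checking that $\phi''$ simplifies to the compact form above rather than to a longer unsimplified quotient. Everything else is a direct substitution, which is why the statement may legitimately be closed with a one-line ``immediate by computation''.
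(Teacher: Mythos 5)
Your proposal is correct, and it is essentially the paper's own argument — the paper just labels it ``Immediate after computation'' using the same ingredients (the chain rule, the formulas $q_{f}=q_{f,+}\phi(a_f\xi)$, $q_b=q_{b,+}\phi(-a_b\xi)$, and $s_j=\dot q_j$). You have simply written out the computation of $\phi'$, $\phi''$ and the constant/sign bookkeeping that the paper leaves implicit.
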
}
\begin{proof}
\textcolor{black}{Immediate after computation using:}

\begin{enumerate}
\item \textcolor{black}{$\forall\,\,\xi \in \mathbb{R}, \forall\,\,r>\frac{2}{3}: s_f(\xi;r)=\frac{d}{d \xi}q_f(\xi;r)$ and $\forall\,\,\xi \in \mathbb{R}, \forall\,\,r>\frac{2}{3}: s_b(\xi;r)=\frac{d}{d \xi}q_b(\xi;r)$}
\item \textcolor{black}{$\forall\,\,\xi \in \mathbb{R}, \forall\,\,r>\frac{2}{3}: q_f(\xi;r)=q_{f,+}(r)\phi\Big(q_{f,+}(r)\sqrt{\frac{r+0.1}{D}}\xi\Big)$ from equation \eqref{gammafxi}, recall $\forall\,\,\chi \in \mathbb{R}: \phi(\chi)=\frac{1}{1+e^{-\frac{1}{2}\sqrt{2}\chi}}$ from \eqref{defphi}} 
\item \textcolor{black}{$\forall\,\,\xi \in \mathbb{R}, \forall\,\,r>\frac{2}{3}: q_b(\xi;r)=q_{b,+}(r)\phi\Big(-q_{b,+}(r)\sqrt{\frac{r+0.1}{D}}\xi\Big)$ from equation \eqref{la}.}
\end{enumerate}
\end{proof}
\begin{enumerate}
    \item \label{wbminusvbminus} \underline{Computing $\langle w_b^-,v_b^-\rangle$}: 
    \begin{enumerate}
        \item  We first show the result for $\epsilon=0$ (corresponding to the fast subsystem \eqref{fastss}) and then extend it to $\epsilon>0$
    sufficiently small. As the first case $\epsilon=0$, we have: 
         \textcolor{black}{\begin{equation}     
         \begin{split}
         \langle w_b^-, v_b^- \rangle   &=\langle \lim_{\xi \longrightarrow +\infty}e^{\lambda_3(X_2)\xi}\psi_f(\xi) , \lim_{\xi \longrightarrow +\infty}e^{\lambda_3(X_2)\xi}\dot{X}_b(-\xi) \rangle.
    \end{split}
     \end{equation}}
     Using the linearity and the continuity of the scalar product, we can write:
     \textcolor{black}{\begin{equation}
     \begin{split}
     \langle w_b^-, v_b^- \rangle&= \lim_{\xi \longrightarrow +\infty}e^{2\lambda_3(X_2)\xi} \langle \psi_f(\xi) , \dot{X}_b(-\xi) \rangle.
     \end{split}
     \end{equation}}
     With the definition of the heteroclinic connection $X_b$ and expression \eqref{expressionpsifb} for the solution $\psi_f$ of the adjoint variational equation \ref{adjvareq} with initial condition \ref{initcond}, we obtain:
     \begin{equation}
     \begin{split}
     \langle w_b^-, v_b^- \rangle
      &=\lim_{\xi \longrightarrow +\infty}e^{\big(2\lambda_3(X_2)-\frac{\mu_0(r)+2}{D_0(r)}\big)\xi} \big(\dot{s}_f(\xi) s_b(-\xi) - s_f(\xi) \dot{s}_b(-\xi) \big)
     \end{split}
     \label{li0}
     \end{equation}
with $s_f$, $s_b$ and $u_b$ defined in equations \eqref{gammafxi} and \eqref{la}.\textcolor{black}{To simplify notations further on, we denote by: \begin{equation}
    d^0_{b^-}(\xi):=\dot{s}_f(\xi) s_b(-\xi) - s_f(\xi) \dot{s}_b(-\xi)
\end{equation} the second factor appearing in above equation.}\\ \\ \textcolor{black}{By contradiction, assume that \begin{equation}
    \langle w_b^-, v_b^- \rangle=:a_b^- < 0.
    \label{assumb-}
\end{equation}} Note that the scalar products do not vanish according to hypotheses $H_1$ and $H_6$, cf.~Sandstede's remark in the statement of $H_6$~\textcolor{black}{\cite{Sandstede98} where this conclusion is discussed in more detail.} \textcolor{black}{Then: 
\begin{equation}
    \forall\,\,\widetilde{\epsilon }> 0, \,\,\exists \,\,\ \xi_{0,b^-} \in \mathbb{R},\,\,\,\,\forall\,\,\xi > \xi_{0,b^-}: 
     \vert e^{\big(2\lambda_3(X_2)-\frac{\mu_0(r)+2}{D_0(r)}\big)\xi}d^0_{b^-}(\xi)-a_b^- \vert < \widetilde{\epsilon }
     \label{ab-ineg}
\end{equation}
In particular, for $\widetilde{\epsilon }=\frac{\vert a_b^- \vert}{2}>0$ and $\xi_{0,b^-}$ chosen accordingly such that \ref{ab-ineg} is satisfied, we have: 
\begin{equation}
    \forall\,\,\xi>\xi_{0,b^-}:e^{\big(2\lambda_3(X_2)-\frac{\mu_0(r)+2}{D_0(r)}\big)\xi}d^0_{b^-}(\xi) \in (a_b^--\widetilde{\epsilon }, a_b^-+\widetilde{\epsilon })=(\frac{3}{2}a_b^-, \frac{a_b^-}{2})
\end{equation}
which is a subset of $\mathbb{R}^-\backslash \{0\}$. Due to the positiveness of the exponential prefactor, we have in particular: 
\begin{equation}
    \forall\,\,\xi>\xi_{0,b^-}:d^0_{b^-}(\xi) <0
    \label{xi0b-}
\end{equation}}

Using \textcolor{black}{expressions 1. to 4. from Lemma \ref{Lemma derivatives}} we compute the following limits as $\xi \longrightarrow +\infty$:
\begin{equation}
    \lim_{\xi \longrightarrow +\infty}s_f(\xi)=0^+,\,\,\,\lim_{\xi \longrightarrow +\infty}\Dot{s}_f(\xi)=0^-,\,\,\, \lim_{\xi \longrightarrow +\infty}s_b(-\xi)=0^-,\,\,\,\lim_{\xi \longrightarrow +\infty}\Dot{s}_b(-\xi)=0^-,
    \label{limA1}
\end{equation}

where $0^\pm$ means that the limit value \textit{zero} is approached from the positive (respectively negative) side, yielding immediately:
\begin{equation}
 \lim_{\xi \longrightarrow +\infty}\textcolor{black}{d^0_{b^-}(\xi)}=0^+.
 \label{lim0}
\end{equation}
 \textcolor{black}{Hence, there exists $\xi_{1,b^-} \in \mathbb{R}$ such that for all $\xi>\xi_{1,b^-}$: 
 \begin{equation}
     d^0_{b^-}(\xi)\geq 0
 \end{equation}
 For $\xi>\max \{\xi_{0,b^-},\xi_{1,b^-}\}$ ($\xi_{0,b^-}$ as in \eqref{xi0b-}) we have as well $d^0_{b^-}(\xi)<0$ as $d^0_{b^-}(\xi)\geq0$, which is a contradiction. Hence our assumption \eqref{assumb-} was false, and as a consequence for $\epsilon=0$:
 \begin{equation}
     \langle w_b^-, v_b^- \rangle>0.
\label{Ineq1eps0}
 \end{equation}}
 \item Now we generalize result \eqref{Ineq1eps0} to $\epsilon>0$ sufficiently small. Let $\gamma^\epsilon_b$ be a heteroclinic back solution of the Barkley model \eqref{eqdiff} with parameter $\epsilon>0$ taken sufficiently small, connecting $X_2$ to $X_1$. We define the components of $\gamma^\epsilon_b$ in the $(q,s,u)$-frame as:
\begin{equation}
    \gamma^\epsilon_b(\xi)=:
\begin{pmatrix}
q_{b}^\epsilon(\xi)  \\
s_{b}^\epsilon(\xi) \\
u_{b}^\epsilon(\xi)
\end{pmatrix}.
\end{equation}
Following the reasoning in (a), we need to determine the limit of
\textcolor{black}{\begin{equation}
d^\epsilon_{b^-}(\xi):=\dot{s}_f(\xi)s_b^\epsilon(-\xi)-s_f(\xi)\dot{s}_b^\epsilon(-\xi)
\end{equation}
as $\xi$ tends to $+\infty$,} the exponential prefactor
\begin{equation}
    e^{\big(2\lambda_3(X_2)-\frac{\mu_0(r)+2}{D_0(r)}\big)\xi}
\end{equation}
remaining unchanged (cf. equation \eqref{li0}). From \textcolor{black}{Lemma \ref{Lemma derivatives} point 3} we obtain for the $s$-component of the singular heteroclinic back:
\begin{equation}
    \forall\,\,\xi \in \mathbb{R}: s_b(\xi)<0.
    \label{sbinf}
\end{equation}
For $\epsilon>0$ sufficiently small, $\gamma_b^\epsilon$ lies in the vicinity of the singular heteroclinic back, which yields:
\begin{equation}
    \exists \,\,\xi_0 \in \mathbb{R}\,\,\,\,\forall\,\,\xi < \xi_0: s_b^\epsilon(\xi)<0.
    \label{ineq2}
\end{equation}
And since 
\begin{equation}
    \lim_{\xi \longrightarrow -\infty}\gamma_b^\epsilon(\xi)=X_2=(q_{b,+}(r),0,u_b(r)),
    \label{hequ}
\end{equation}
we immediately have:
\begin{equation}
    \lim_{\xi \longrightarrow +\infty}s_b^\epsilon(-\xi)=0,
\end{equation}
which means together with inequality 
\eqref{ineq2} that
\begin{equation}
    \lim_{\xi \longrightarrow +\infty}s_b^\epsilon(-\xi)=0^-
    \label{sblimeps}
\end{equation}
for $\epsilon>0$ sufficiently small.\\
\newline
Now we compute the limit of the derivative of $s_b^\epsilon (-\xi)$ as $\xi \longrightarrow +\infty$, which satisfies according to the dynamical system \eqref{eqdiff} the differential equation:
\begin{equation}
    \Dot{s}_b^\epsilon=\frac{1}{D}\Big((\underbrace{u_b^\epsilon+\mu}_{A})\underbrace{s_b^\epsilon}_{B}-\underbrace{q_b^\epsilon}_{C}\big(\underbrace{r+u_b^\epsilon-2-(r+0.1)(q_b^\epsilon-1)^2}_{D}\big)\Big).
    \label{ABCD}
\end{equation}
\par
\begingroup
\leftskip=1cm 
\noindent 
\underline{Term $A$:}
\begin{equation}
\lim_{\xi \longrightarrow +\infty}u_b^\epsilon(-\xi;r)+\mu=u_b(r)-\zeta-c>0
\end{equation}
for $\zeta>0$ sufficiently small, since $c<u_b(r)$ (cf. beginning of section \ref{proofbark}, hypotheses).\\
\par
\endgroup
\par
\begingroup
\leftskip=1cm 
\noindent 
\underline{Term $B$:} cf. equation \eqref{sblimeps}.\\
\par
\endgroup
\par
\begingroup
\leftskip=1cm 
\noindent 
\underline{Term $C$:}
\begin{equation}
\lim_{\xi \longrightarrow +\infty}q_b^\epsilon(-\xi;r)=q_{b,+}(r)>0
\end{equation}
(cf. equations \eqref{eqb+} and \eqref{hequ}).\\
\par
\endgroup
\par
\begingroup
\leftskip=1cm 
\noindent 
\underline{Term $D$:} Define
\begin{equation}
    \widetilde{f}(q,u;r):=\frac{1}{q}f(q,u;r),
\end{equation}
which is well-defined in the vicinity of $X_2$, far from the plane $q=0$. The equilibrium $X_2$ lies on the parabola of the critical manifold $M_0$ (cf. Figure \ref{fig: Singloop}), hence 
\begin{equation}
    \widetilde{f}(q_{b,+}(r),u_b(r);r)=r+u_b(r)-2-(r+0.1)(q_{b,+}(r)-1)^2=0
    \label{f01}
\end{equation}
(cf. also \textcolor{black}{(R\ref{R1}))}. The $u$-component of the singular heteroclinic connection $X_b$ stays constant between $X_2$ and $Y_2$ ($u=u_b(r)$), however its $q$-component is decreasing, as $q_b'(\xi)=s_b(\xi)<0$ for $\xi \in \mathbb{R}$ according to \textcolor{black}{Lemma \ref{Lemma derivatives} point 3}. For the derivative of $\widetilde{f}$ with respect to $\xi$, we obtain with constant value of $u$ on the singular connection $X_b$:
\begin{equation}
    \widetilde{f}'(\xi)=-2(r+0.1)(q_b(\xi)-1)q_b'(\xi).
    \label{f'}
\end{equation}
We immediately get the existence of a $\xi_{2,b^-} \in \mathbb{R}$ such that \eqref{f'} is positive for all $\xi<\xi_{2,b^-}$, as 
\begin{equation}
    \lim_{\xi \longrightarrow -\infty}q_b(\xi)=q_{b,+}(r)>1
\end{equation}
(cf. equation \ref{eqb+}) and
\begin{equation}
    \forall \,\,\xi \in \mathbb{R}: s_b(\xi)<0
\end{equation}
cf. \textcolor{black}{Lemma \ref{Lemma derivatives} point 3}. Together with 
\begin{equation}
    \lim_{\xi \longrightarrow -\infty}\widetilde{f}(q_b(\xi),u_b(r);r)=\widetilde{f}(q_{b,+}(r),u_b(r);r)=0
\end{equation}
by continuity of $\widetilde{f}$ in the vicinity of $X_2$ and equation \eqref{f01}, this implies:
\begin{equation}
    \widetilde{f}(q_b(\xi),u_b(r);r)>0
\end{equation}
for all $\xi<\xi_{2,b^-}$, meaning nothing else than all points of $X_b(\xi)$ in a neighborhood of $X_2$ being "inside" the parabola of the manifold $M_0$ in the $(q,u)$-plane (cf. Figure \ref{fig: Singloop}). For $\epsilon>0$ sufficiently small this also holds true for an actual heteroclinic connection $\gamma_b^\epsilon$, since $\gamma_b^\epsilon$ then lies in the vicinity of $X_b$. Rearranging the expression for $\widetilde{f}$ hence yields:
\begin{equation}
    u_b^\epsilon(\xi)>2-r+(r+0.1)(q_b^\epsilon(\xi)-1)^2
\end{equation} 
for $\xi$ in a neighborhood of $-\infty$, and since 
\begin{equation}
    \lim_{\xi \longrightarrow -\infty}\widetilde{f}(q_b^\epsilon(\xi),u_b^\epsilon(\xi);r)=0
\end{equation}
we have:
\begin{equation}
    \lim_{\xi \longrightarrow +\infty}r+u_b^\epsilon(-\xi)-2-(r+0.1)(q_b^\epsilon(-\xi)-1)^2=0^+.
\end{equation}\\
\par
\endgroup
Hence we have with expression \eqref{ABCD} for $\epsilon>0$ sufficiently small:
\begin{equation}
    \lim_{\xi \longrightarrow +\infty}\Dot{s}_b^\epsilon(-\xi)=0^-
    \label{eq163}
\end{equation}
Using the limits \eqref{limA1}, \eqref{sblimeps} and \eqref{eq163} we compute:
\begin{equation}
    \lim_{\xi \longrightarrow +\infty}\textcolor{black}{d^\epsilon_{b^-}(\xi)}=0^+
\end{equation}
and analogously to the case $\epsilon=0$, we conclude for $\epsilon>0$ sufficiently small:
\begin{equation}
\langle w_b^-, v_b^- \rangle>0.
\end{equation}
\end{enumerate}
\item \label{wbplusvbplus} \underline{Computing $\langle w_b^+,v_b^+\rangle$}: \\ \\
    \textcolor{black}{Similarly to the work in \ref{wbminusvbminus}, we have}
    \textcolor{black}{\begin{equation}
     \begin{split}
         \langle w_b^+, v_b^+ \rangle&=\langle \lim_{\xi \longrightarrow +\infty}e^{-\lambda_2(X_2)\xi}\psi_b(-\xi) , \lim_{\xi \longrightarrow +\infty}e^{-\lambda_2(X_2)\xi}\dot{\gamma}_f^\epsilon(\xi) \rangle.
    \end{split}
     \end{equation}} 
     Again, using the linearity and the continuity of the scalar product yields:
    \textcolor{black}{\begin{equation}
     \begin{split}
     \langle w_b^+, v_b^+ \rangle &=\lim_{\xi \longrightarrow +\infty}e^{-2\lambda_2(X_2)\xi} \langle \psi_b(-\xi) , \dot{\gamma}_f^\epsilon(\xi) \rangle.
     \end{split} 
     \end{equation}}
     With the definition of the orbit $\gamma_f^\epsilon$ and expression \eqref{expressionpsifb} for the solution $\psi_b$ of the adjoint variational equation \eqref{adjvareq} with initial condition \eqref{initcond}, we obtain:
    \begin{equation}
     \begin{split}
     \langle w_b^+, v_b^+ \rangle 
      &=\lim_{\xi \longrightarrow +\infty}e^{\big(-2\lambda_2(X_2)+\frac{\mu_0(r)+\mu_b(r)}{D_0(r)}\big)\xi} \big(\dot{s}_b(-\xi) s_f^\epsilon(\xi) - s_b(-\xi) \dot{s}_f^\epsilon(\xi) \big).
     \end{split}
     \label{exp11}
     \end{equation}
     \textcolor{black}{For simplicity we introduce: \begin{equation}
         d^\epsilon_{b^+}(\xi):=\dot{s}_b(-\xi) s_f^\epsilon(\xi) - s_b(-\xi) \dot{s}_f^\epsilon(\xi)
     \end{equation}} 
     Now we investigate the behaviour as $\xi \longrightarrow + \infty$ of $s_f^\epsilon(\xi)$ and $\Dot{s}_f^\epsilon(\xi)$ of the actual heteroclinic front for $\epsilon>0$ sufficiently small. From \textcolor{black}{(R\ref{L3.7})} we know that $W^s_{\boldsymbol{\alpha}(\epsilon;r)}(K_{2,\boldsymbol{\alpha}(\epsilon;r)})$ and $W^u_{\boldsymbol{\alpha}(\epsilon;r)}(K_{1,\boldsymbol{\alpha}(\epsilon;r)})$ intersect transversally along $\gamma_f^\epsilon$. As the heteroclinic loop is non-degenerate, $\gamma^\epsilon_f(\xi)$ is asymptotically tangent for $\xi \longrightarrow +\infty$ to the principal stable eigenvector $e_2(X_2)$ of $X_2=(q_{b,+}(r),0,u_b(r))$, cf. Section \ref{H3}. The stable manifold $W_{\boldsymbol{\alpha}}^s(K_{2,\boldsymbol{\alpha}})$ of the invariant manifold $K_{2,\boldsymbol{\alpha}}$ ($K_{i,\boldsymbol{\alpha}}$ as defined in \textcolor{black}{(R\ref{K10K20})}) is two-dimensional and consists of all orbits (locally) converging to $K_{2,\boldsymbol{\alpha}}$ as $\xi \longrightarrow + \infty$. The unstable manifold $W_{\boldsymbol{\alpha}}^u(K_{1,\boldsymbol{\alpha}})$ of the invariant manifold $K_{1,\boldsymbol{\alpha}}$ is one-dimensional and is made up of all orbits (locally) converging to $K_{1,\boldsymbol{\alpha}}$ as $\xi \longrightarrow - \infty$. \textit{Geometric singular perturbation theory} yields that  $W_{\boldsymbol{\alpha}}^s(K_{2,\boldsymbol{\alpha}})$ coincides for $\epsilon>0$ with the stable manifold $W_{\boldsymbol{\alpha}}^s(X_2)$ of the steady state $X_2$ in system \eqref{eqdiff}, cf. \textcolor{black}{(R\ref{defQfQb})}. The unstable manifold $W^u_{\boldsymbol{\alpha}}(K_{1,\boldsymbol{\alpha}})$ of the invariant manifold $K_{1,\boldsymbol{\alpha}}$ and the stable manifold $W^s_{\boldsymbol{\alpha}}(K_{2,\boldsymbol{\alpha}})$ of the invariant manifold $K_{2,\boldsymbol{\alpha}}$ depend smoothly on $\boldsymbol{\alpha}$ for $\boldsymbol{\alpha}$ close to $\boldsymbol{\alpha_0}$. This prevents any oscillatory behaviour of the actual heteroclinic front for $\epsilon>0$ along the singular one, which arises at $\boldsymbol{\alpha}=\boldsymbol{\alpha}_0$.\\ 

Since 
\begin{equation}
    \lim_{\xi \longrightarrow + \infty}\gamma_f^\epsilon(\xi)=X_2,
    \label{trilim}
\end{equation} $\gamma_f^\epsilon$ lies, for $\epsilon>0$ sufficiently small, in the vicinity as $\xi \longrightarrow + \infty$ of the orbit segment in the slow subsystem \eqref{slowsubsys}, which is located on the critical manifold $M_0$ and connects $Y_1=(q_{f,+}(r),0,2)$ to $X_2=(q_{b,+}(r),0,u_b(r))$, cf. Figure \ref{fig: Singloop} for a graphical representation. This orbit segment is located in the $(q,u)$-plane on the right branch of the parabola (which minimum is reached at $q=1$)
\begin{equation}
    u(q;r)=2+(r+0.1)(q-1)^2-r,
\end{equation}
 since
\begin{equation}
    1<q_{b,+}(r)<q_{f,+}(r).
    \label{setin}
\end{equation}
Relation \eqref{setin} follows immediately from the expressions for $q_{f,+}(r)$ and $q_{b,+}(r)$ from \textcolor{black}{(R\ref{R1})} with the inequality
\begin{equation}
    u_b(r)<2,
\end{equation}
since $u_b(r) \in (\frac{6}{5},\frac{4}{3})$. Any path on the right branch of the parabola connecting a point $X_a=(q_a,0,u_a)$ to another point $X_b=(q_b,0,u_b)$ with $q_a>q_b$ is decreasing in $q$ due to monotonicity. Since the actual heteroclinic front $\gamma_f^\epsilon$ lies in the vicinity for $\epsilon>0$ sufficiently small of the singular one and we could exclude an oscillatory behaviour, the $q$-component of $\gamma_f^\epsilon$ is also decreasing for $\xi \longrightarrow + \infty$, which implies:
\begin{equation}
    \exists \,\,\xi_0 \in \mathbb{R}\,\,\,\,\forall \,\,\xi>\xi_0: s_f^{\epsilon}(\xi)=\Dot{q}_f^{\epsilon}(\xi)<0.
    \label{ineq34}
\end{equation}
From equation \eqref{trilim} we immediately have:
\begin{equation}
     \lim_{\xi \longrightarrow + \infty}s_f^\epsilon(\xi)=0,
     \label{limsfesp0-}
\end{equation}
which yields with inequality \eqref{ineq34}
\begin{equation}
    \lim_{\xi \longrightarrow + \infty}s_f^\epsilon(\xi)=0^-.
    \label{Alimsfesp0-}
\end{equation}

Now we compute the limit as $\xi \longrightarrow +\infty$ of the derivative $\Dot{s}_f^\epsilon(\xi)$  for $\epsilon>0$ sufficiently small. From the dynamical system \eqref{eqdiff}, we know that the derivative satisfies
        \begin{equation}
    \Dot{s}_f^\epsilon(\xi)=\frac{1}{D}\Big((u_f^\epsilon(\xi)+\mu)s_f^\epsilon(\xi)-f(q_f^\epsilon(\xi),u_f^\epsilon(\xi);r)\Big).
        \end{equation}
        We have by continuity of $f$:
        \begin{equation}
    \lim_{\xi \longrightarrow +\infty}f(q_f^\epsilon(\xi),u_f^\epsilon(\xi);r)=f(q_{b,+}(r),u_b(r);r)=0,
    \label{f0}
        \end{equation}
        since $X_2$ lies on the manifold $M_0$. \textcolor{black}{Since $u_f^\epsilon(\xi)$ is bounded for all $\xi>0$},  \eqref{limsfesp0-} and \eqref{f0} yield that $\Dot{s}_f^\epsilon(\xi)$ vanishes for $\xi \longrightarrow +\infty$:
        \begin{equation}
            \lim_{\xi \longrightarrow +\infty}\Dot{s}_f^\epsilon(\xi)=0.
        \end{equation}
As equation \eqref{Alimsfesp0-} holds and $s_f^\epsilon$ cannot show an oscillatory behaviour, we have
\begin{equation}
    \exists\,\,\xi_0 \in \mathbb{R}\,\,\,\,\forall\,\,\xi > \xi_0: \Dot{s}_f^\epsilon(\xi)>0.
    \label{ineq2000}
\end{equation}
Hence 
\begin{equation}
            \lim_{\xi \longrightarrow +\infty}\Dot{s}_f^\epsilon(\xi)=0^+.
            \label{lim4000}
        \end{equation}
 Using the limits \eqref{limA1}, \eqref{Alimsfesp0-} and \eqref{lim4000} we are able to compute:
\begin{equation}
    \lim_{\xi \longrightarrow +\infty} \textcolor{black}{d^\epsilon_{b^+}(\xi)}=0^+.
    \label{lim000}
\end{equation}
\textcolor{black}{Hence, there exists $\xi_{0,b^+} \in \mathbb{R}$ such that for all $\xi>\xi_{0,b^+}$: 
 \begin{equation}
     d^\epsilon_{b^+}(\xi)\geq 0
     \label{db+epsgrzero}
 \end{equation}}
 \textcolor{black}{Following the same logic as in \ref{wbminusvbminus}, assume by contradiction for $\epsilon>0$ sufficiently small: \begin{equation}
    \langle w_b^+, v_b^+\rangle=:a_b^+ < 0.
    \label{assumb+}
\end{equation}} Again, note that the scalar products do not vanish according to hypotheses $H_1$ and $H_6$., cf. Sandstede's remark in the statement of $H_6$ \textcolor{black}{\cite{Sandstede98} where this conclusion is discussed in more detail.
Then: 
\begin{equation}
    \forall\,\,\widetilde{\epsilon }> 0, \,\,\exists \,\,\ \xi_{1,b^+} \in \mathbb{R},\,\,\,\,\forall\,\,\xi > \xi_{1,b^+}: 
     \vert e^{\big(-2\lambda_2(X_2)+\frac{\mu_0(r)+\mu_b(r)}{D_0(r)}\big)\xi}d^\epsilon_{b^+}(\xi)-a_b^+ \vert < \widetilde{\epsilon }
     \label{ab+ineg}
\end{equation}
In particular, for $\widetilde{\epsilon }=\frac{\vert a_b^+ \vert}{2}>0$ and $\xi_{1,b^+}$ chosen accordingly such that \ref{ab+ineg} is satisfied, we have: 
\begin{equation}
    \forall\,\,\xi>\xi_{1,b^+}:e^{\big(-2\lambda_2(X_2)+\frac{\mu_0(r)+\mu_b(r)}{D_0(r)}\big)\xi}d^\epsilon_{b^+}(\xi) \in (a_b^+-\widetilde{\epsilon }, a_b^++\widetilde{\epsilon })=(\frac{3}{2}a_b^+, \frac{a_b^+}{2})
\end{equation}
which is a subset of $\mathbb{R}^-\backslash \{0\}$. Due to the positiveness of the exponential prefactor, we have in particular: 
\begin{equation}
    \forall\,\,\xi>\xi_{1,b^+}:d^\epsilon_{b^+}(\xi) <0
    \label{xi0b+}
\end{equation}
This time for $\xi>\max \{\xi_{0,b^+},\xi_{1,b^+}\}$ ($\xi_{0,b^+}$ as in \eqref{db+epsgrzero}) we have as well $d^\epsilon_{b^+}(\xi)<0$ as $d^\epsilon_{b^+}(\xi)\geq0$, which is a contradiction. Hence our assumption \eqref{assumb+} was false, and as a consequence for $\epsilon>0$ sufficiently small:
 \begin{equation}
     \langle w_b^+, v_b^+ \rangle>0.
 \end{equation}}
\item \underline{Computing $\langle w_f^-,v_f^-\rangle$}:\\ 

\textcolor{black}{Similarly to the work done in \ref{wbminusvbminus}, one shows for $\epsilon>0$ sufficiently small: $\langle w_f^-, v_f^- \rangle>0$. Corresponding details can be found in  Appendix \ref{Appendice w_f^-, v_f^-}.}

\item \underline{Computing $\langle w_f^+,v_f^+\rangle$}:\\ 

\textcolor{black}{Analogously to \ref{wbplusvbplus}, one shows for $\epsilon>0$ sufficiently small: $\langle w_f^+, v_f^+ \rangle>0$. Corresponding details can be found in Appendix \ref{Appendice w_f^+,v_f^+}.}

\end{enumerate}
Hence, the scalar products $\langle w_j^-,v_j^-\rangle$ and $\langle w_j^+,v_j^+\rangle$ are positive for $j\in\{f,b\}$, where $\epsilon>0$ is taken sufficiently small. 
\subsection{Existence theorem of \texorpdfstring{$N$}{Lg}-front and \texorpdfstring{$N$}{Lg}-back solutions}
\label{PS}
In this section, we define \textit{N-front} and \textit{N-back} solutions from an orbital perspective. Note that these definitions are equivalent to those given in \cite{EKR2022} for traveling waves in term of existence and number of pulses. \\

For $\epsilon>0$, we choose two \textcolor{black}{codimension-one hyperplanes (or ``sections'')} $\Sigma_j$ where $j\in\{f,b\}$ such that:
\begin{enumerate}
    \item $\Sigma_j$ \textcolor{black}{contains the point} $\gamma_j^\epsilon(0)$; 
    \item $\Sigma_j$ is transverse to the vector field.
\end{enumerate}
The heteroclinic solution $\gamma_f^\epsilon(\xi)$ is called \textit{simple front}, and the heteroclinic solution $\gamma_b^\epsilon(\xi)$ \textit{simple back}. A heteroclinic orbit which connects the equilibrium $X_1$ to the equilibrium $X_2$ and intersects the Poincaré section $\Sigma_b$ $N$ times, is called a \textit{N-front solution}. Analogously, a heteroclinic orbit connecting the steady state $X_2$ to the steady state $X_1$ and intersecting $\Sigma_f$ $N$ times is called a \textit{N-back solution}. Each $N$-front or $N$-back solution hence crosses the union of Poincaré sections  $\Sigma_f \cup \Sigma_b$ $2N+1$ times (\cite{Sandstede98}). 
\\ \\
Since Hypotheses $H_0$ to $H_6$ are fulfilled for the Barkley model \eqref{primary} under conditions 1. to 3. stated at the beginning of Section \ref{proofbark} (as shown in Sections \ref{Section2.1} to \ref{last section}), we can apply Theorem 1 in \cite{Sandstede98}, which gives existence of $N$-front and $N$-back traveling solutions and characterizes them. As in equation \eqref{abstract form}, we write the dynamical system \eqref{eqdiff} in the abstract form
\begin{equation}
    \Dot{X}=\mathcal{F}(X;\boldsymbol{\alpha}(r);r).
    \label{abstractform2}
\end{equation}
\textcolor{black}{Recall that $\alpha=(D,\mu,\epsilon)$ contains three parameters of the system with $\mu=-(\zeta+c)$, where $c$ is the wave speed. The next result is a variant of the existence of $N$-fronts and $N$-backs in the terminology of~\cite{Sandstede98}, where one shall view $\zeta$ and $D$ as parameters that are fixed depending upon the bifurcation parameters $\mu$ and $\epsilon$, while $r\in(\frac{2}{3},\beta)$ is in the allowed Reynolds number range. In \cite{EKR2022} another set of two parameters was used employing $r$ and $\epsilon$ as the bifurcation parameters, but evidently we always need precisely two free parameters for the unfolding.}

\begin{theorem}
\label{thm:Sandexist}
    For each $N>1$ \textcolor{black}{and fixed $r \in (\frac{2}{3},\beta),$} a unique curve \textcolor{black}{$\omega_N:[0,\rho_0)\rightarrow \mathbb{R}^2$ for $\omega_N=\omega_N(\rho)$} 
    exists in parameter space \textcolor{black}{$(\mu,\epsilon)\in\mathbb{R}^2$} and \eqref{abstractform2} has an $N$-front solution exactly for \textcolor{black}{$\omega=\omega_N(\rho)$} for some $\rho$. 
    The curve \textcolor{black}{$\omega_N$ is of class $C^1$ in $\rho$} and we have uniqueness of the $N$-fronts.
    \\Assume that \textcolor{black}{$\omega_1$} and \textcolor{black}{$\omega_2$} correspond to the existence of a simple front or back respectively. Then we can compute the so-called return times of the N-fronts with respect to the Poincaré sections $\Sigma_f$ and $\Sigma_b$, which are explicitly given by for $k\in \{0,1,...,N-1\}$:
    \begin{equation}
        \tau_i = \left\{
\begin{array}{ll}
\frac{\beta_2^\epsilon+\eta_{k+1}}{\lambda_2^\epsilon(X_1)}\ln(\rho) & i=2k+1, \, \textrm{corresponding to the time near the steady state $X_1$} \\
\frac{1}{\lambda_2^\epsilon(X_2)}\ln(\rho) & i=2k, \, \textrm{corresponding to the time near the steady state $X_2$} \\
\end{array},
\right. 
\label{Fallunt}
    \end{equation}
    where 
    \begin{equation}
        \beta_2^\epsilon:=\frac{\lambda_3^\epsilon(X_2)}{-\lambda_2^\epsilon(X_2)}>1
        \label{beta2eps}
    \end{equation}
    (cf. \textcolor{black}{Corollary \ref{Corcondspec} point 2}) and $(\eta_k)_{k\in \{0,1,...,N-1\}}$ being defined by following recurrence relation:
    \begin{equation}
        \eta_k=\beta_1^\epsilon \eta_{k+1}+\eta_{N-1}>\eta_{k+1}
    \end{equation}
    for $k\in \{0,1,...,N-2\}$ and $\eta_{N-1}=\beta_1^\epsilon \beta_2^\epsilon-1>0$. \\
    
    $\beta_1^\epsilon$ is defined analogously to $\beta_2^\epsilon$ as being the ratio:
    \begin{equation}
        \beta_1^\epsilon:=\frac{\lambda_3^\epsilon(X_1)}{-\lambda_2^\epsilon(X_1)}>1
    \end{equation}
    (cf. \textcolor{black}{Corollary \ref{Corcondspec} point 2}). \\
    
    We have similar results for $N$-back solutions. 
    \label{First Theorem}
\end{theorem}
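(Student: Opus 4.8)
The plan is to deduce Theorem~\ref{First Theorem} from Theorem~1 of Sandstede~\cite{Sandstede98}, whose standing hypotheses (H0)--(H6) have already been verified for the Barkley travelling-wave system~\eqref{abstractform2} in Sections~\ref{Section2.1}--\ref{last section}, under conditions 1.--3.\ of Section~\ref{proofbark}. So the substantive analytic work is done; what remains is to fix the correct parameter dictionary, quote Sandstede's conclusion, and then extract the explicit return-time formulas from his construction using the spectral information collected in Corollary~\ref{Corcondspec}.

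First I would fix $r\in(\tfrac23,\beta)$ with $\beta$ as in~\eqref{beta}, and $\epsilon\in(0,\epsilon_0(r))$ with $\epsilon_0(r)$ as in (R\ref{L3.7}), so that by Theorem~\ref{hyperbolicity} and the preceding sections the system possesses two hyperbolic saddles $X_1,X_2$ with the spectral ordering of Corollary~\ref{Corcondspec} and a non-degenerate, double-twisted heteroclinic loop $\gamma_f^\epsilon\cup\gamma_b^\epsilon$. I would then declare the two unfolding parameters of Sandstede's theorem to be $(\mu,\epsilon)$ — with $D=\widehat D(\epsilon;r)$ slaved and $\zeta$ absorbed through $\mu=-(\zeta+c)$ — and note that the linear independence of $\nabla Q_f$ and $\nabla Q_b$ established in Theorem~\ref{linind} is precisely the non-degeneracy that makes this a legitimate two-parameter unfolding (as stressed in the remark preceding the statement, one always needs exactly two free parameters; \cite{EKR2022} used $r,\epsilon$ instead, but the count is the same). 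Sandstede's Theorem~1 then yields, for each $N>1$, the $C^1$ curve $\rho\mapsto\omega_N(\rho)$ on $[0,\rho_0)$ in the $(\mu,\epsilon)$-plane along which — and, locally, only along which — an $N$-front exists, together with its uniqueness; the $N$-back statement follows by running the same theorem with the roles $X_1\leftrightarrow X_2$ and $\gamma_f^\epsilon\leftrightarrow\gamma_b^\epsilon$ interchanged, which is permissible because (H0)--(H6) are symmetric under that exchange.

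Finally I would read off the return times~\eqref{Fallunt}. Corollary~\ref{Corcondspec} gives $\lambda_1^\epsilon(X_k)<\lambda_2^\epsilon(X_k)<0<\lambda_3^\epsilon(X_k)$ and $-\lambda_2^\epsilon(X_k)<\lambda_3^\epsilon(X_k)$, so the ratios $\beta_k^\epsilon:=\lambda_3^\epsilon(X_k)/(-\lambda_2^\epsilon(X_k))$ satisfy $\beta_k^\epsilon>1$, which is~\eqref{beta2eps} and its counterpart at $X_1$. Sandstede's matching analysis — a Lin/Shilnikov-type computation in straightened coordinates near each saddle, referenced to the base points $\omega_1,\omega_2$ of the simple front and back — expresses the sojourn time of the $N$-front near $X_2$ as $\tfrac{1}{\lambda_2^\epsilon(X_2)}\ln\rho$ and near $X_1$ at the $(2k+1)$-st passage as $\tfrac{\beta_2^\epsilon+\eta_{k+1}}{\lambda_2^\epsilon(X_1)}\ln\rho$; unwinding his jump conditions produces the recursion $\eta_k=\beta_1^\epsilon\eta_{k+1}+\eta_{N-1}$ with $\eta_{N-1}=\beta_1^\epsilon\beta_2^\epsilon-1>0$, and the monotonicity $\eta_k>\eta_{k+1}$ is then immediate from $\beta_1^\epsilon>1$ and $\eta_{N-1}>0$. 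The $N$-back return times follow by the same symmetry swap. I expect the main obstacle to be organisational rather than analytic: one must align the indexing of the $2N+1$ crossings of $\Sigma_f\cup\Sigma_b$ with Sandstede's labelling so that the recursion indices match, and check that every object entering (H0)--(H6) has been supplied with the regularity Sandstede's framework requires — but no new estimate beyond those of Sections~\ref{Section2.1}--\ref{last section} and (R1)--(R12) is needed.
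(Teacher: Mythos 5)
Your proposal matches the paper's approach exactly: the paper gives no separate proof for this theorem, but simply observes (immediately before the statement) that Hypotheses $H_0$--$H_6$ have been verified in Sections~\ref{Section2.1}--\ref{last section} under conditions 1.--3.\ of Section~\ref{proofbark}, that $(\mu,\epsilon)$ are taken as the two bifurcation parameters with $D$ and $\zeta$ slaved, and then quotes Theorem~1 of \cite{Sandstede98}, whose conclusion already contains the $C^1$ curve $\omega_N$, uniqueness, and the return-time formulas with the recursion for $\eta_k$. Your extra remarks — that Theorem~\ref{linind} supplies the two-parameter non-degeneracy, and that the $N$-back case follows by the $X_1\leftrightarrow X_2$, $\gamma_f^\epsilon\leftrightarrow\gamma_b^\epsilon$ symmetry — are implicit in the paper and correctly fill in the reasoning.
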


For a given parameter configuration, we see from \eqref{Fallunt} that the time spent by the $N$-fronts near the equilibrium $X_2$ 
\begin{equation}
    \tau:= \frac{1}{\lambda_2^\epsilon(X_2)}\ln(\rho)
\end{equation} is identical for each layer $i=2k$. Computing the ratio of the time spent near $X_1$ for each layer $i = 2k+1$ over the time spent near $X_2$ yields:
\begin{equation}
    \frac{\tau_{2k+1}}{\tau}=\frac{-\lambda_3^\epsilon(X_2)+\eta_{k+1}\lambda_2^\epsilon(X_2)}{\lambda_2^\epsilon(X_1)}=:\sigma_{2k+1}.
\end{equation}
The sequence $(\sigma_{2k+1})_{k\in \{0,1,...,N-1\}}$ is strictly decreasing in $k$ as $(\eta_{k})_{k\in \{0,1,...,N-1\}}$ is also strictly decreasing and $\lambda_2^\epsilon(X_k)<0$ for $k\in\{1,2\}$. This means that the distance of the odd layers is getting smaller for increasing layer indices, as depicted in Figure \ref{fig: Layertime}. \textcolor{black}{Basically, Theorem~\ref{thm:Sandexist} is just a variant of the existence theorem of $N$-fronts and $N$-backs in~\cite{EKR2022} so we could have omitted it here in principle. Yet, it is re-assuring to see that the hard explicit calculations needed in~\cite{EKR2022} to show existence are precisely the calculations needed to apply the more abstract framework in~\cite{Sandstede98}. The aspect we aim to exploit here is that the abstract results in~\cite{Sandstede98} also contain a linear/spectral stability result for the waves, and this is what we want to utilize next. Yet, this requires checking additional hypotheses.}

\begin{figure}[H]
    \centering
    \includegraphics[width=16cm]{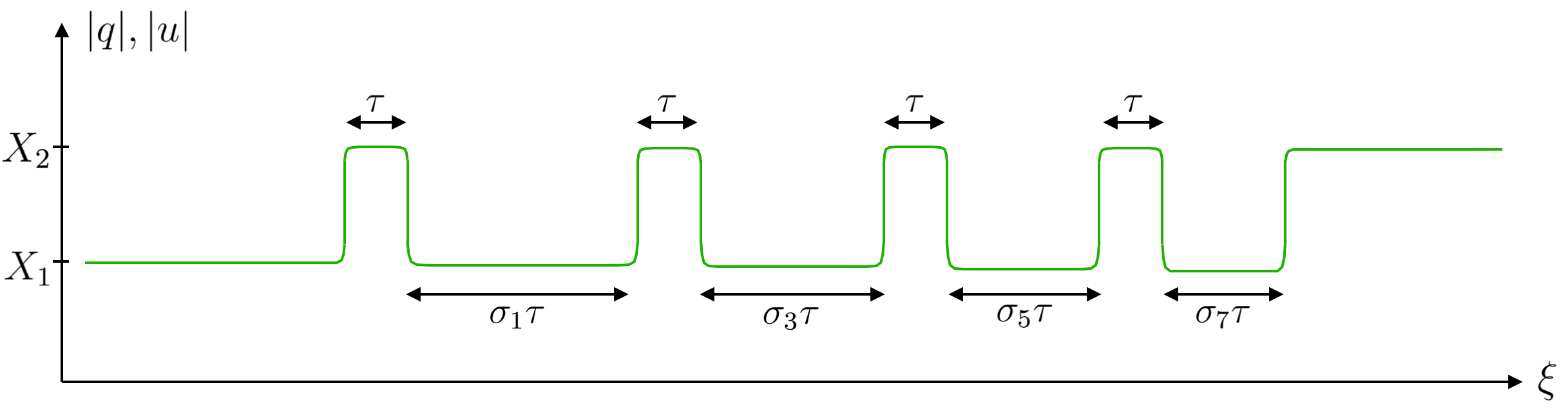}\\[3mm] 
    \caption{Representation of a 4-front wave solution. Note the constant distance for even layers and the decreasing distance for odd layers. The return times with respect to $\Sigma_f$ and $\Sigma_b$ are explicitly given in \eqref{Fallunt}.}
    \label{fig: Layertime}
    \end{figure}
\subsection{Hypothesis \texorpdfstring{$H_7$}{Lg}: Non-zeroness of Melnikov integrals}
\label{H7}
Now we want to describe the bounded solutions
\begin{equation}
    Y(\xi)=\begin{pmatrix}
        q(\xi)\\ s(\xi) \\ u(\xi)
    \end{pmatrix}\in C^1(\mathbb{R},\mathbb{R}^3)
\end{equation}
of the differential equation: 
\begin{equation}
   \Dot{Y}=\Big(D_X\mathcal{F}\big(\gamma_{f,N}(\rho)(\xi),\omega_N(\rho),r\big)+\lambda B(\xi)\Big)Y
   \label{eq251}
\end{equation}
for $\lambda \in \mathbb{C}$ belonging to a neighborhood of 0. The function $\gamma_{f,N}(\rho)$ denotes the $N$-front solution which exists for $\omega=\omega_N(\rho)$ (cf. Theorem \ref{First Theorem}) and $B$ a $3 \times 3$ real matrix-valued function. Equation \eqref{eq251} is nothing else than a generalized eigenvalue problem, which can be written as:
\begin{equation}
    LY=\lambda BY.
\end{equation}
Now if we compare the traveling wave system of equations \eqref{eqdiff} with equations \eqref{abstractform2} and \eqref{eq251}, they appear to be of the same form by taking $B$ as:
\begin{equation}
    B(\xi)=
\begin{pmatrix}
0 & 0 & 0 \\
\frac{1}{D} & 0 & 0 \\
0 & 0 & \frac{1}{c-u(\xi)}
\end{pmatrix}.
\end{equation}
Hypothesis $H_7$ supposes that the Melnikov integrals 
\begin{equation}
    M_j=\int_{- \infty}^{+ \infty}\langle \psi_{j}(\xi),B(\xi)\dot{\gamma}_j^\epsilon(\xi)\rangle d\xi
    \label{defmel}
\end{equation}
are non-zero for $j\in\{f,b\}$ with $\psi_{f}$ and $\psi_{b}$ as \eqref{expressionpsifb} (ie. chosen as in Hypothesis $H_6$). \textcolor{black}{Now we have to verify the hypothesis about the Melnikov integrals:} 
\begin{theorem}
    The Melnikov integrals $M_j$ as defined in 
\eqref{defmel} are negative for $j\in\{f,b\}$ and $\epsilon>0$ taken sufficiently small.
\label{Melnikovtheorem}
\end{theorem}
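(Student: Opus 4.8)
The plan is to reduce each $M_j$ to one explicit scalar integral, identify its value in the singular limit $\epsilon\to 0^+$, and then transfer the sign to $\epsilon>0$ small by robustness. First I would unwind the integrand of \eqref{defmel}: by \eqref{eqdiff} one has $\dot\gamma_j^\epsilon(\xi)=(s_j^\epsilon(\xi),\dot s_j^\epsilon(\xi),\dot u_j^\epsilon(\xi))$, hence
\begin{equation}
B(\xi)\dot\gamma_j^\epsilon(\xi)=\Big(0,\ \tfrac{1}{\widehat D(\epsilon;r)}\,s_j^\epsilon(\xi),\ \tfrac{\dot u_j^\epsilon(\xi)}{c-u_j^\epsilon(\xi)}\Big),
\end{equation}
and since the third entry of $\psi_j$ from \eqref{expressionpsifb} vanishes identically, only the middle slot contributes to the pairing. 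Using $\psi_{j,2}(\xi)=-e^{-\frac{\mu_0(r)+c_j}{D_0(r)}\xi}s_j(\xi;r)$, with the convention $c_f:=2$, $c_b:=u_b(r)$ and $s_j(\cdot;r)$ the singular profile of Lemma \ref{Lemma derivatives}, we obtain
\begin{equation}
M_j=-\frac{1}{\widehat D(\epsilon;r)}\int_{-\infty}^{+\infty}e^{-\frac{\mu_0(r)+c_j}{D_0(r)}\xi}\,s_j(\xi;r)\,s_j^\epsilon(\xi)\,d\xi,
\label{Mjexp}
\end{equation}
where $s_j^\epsilon$ denotes the $s$-component of the actual heteroclinic $\gamma_j^\epsilon$.

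Next I would pass to the limit $\epsilon\to 0^+$ in \eqref{Mjexp}. By (R\ref{T2.2}) we have $\widehat D(\epsilon;r)\to D_0(r)>0$, and by geometric singular perturbation theory $\gamma_j^\epsilon$ converges to the singular front/back of (R\ref{L3.7}) uniformly on compact $\xi$-intervals, so $s_j^\epsilon(\xi)\to s_j(\xi;r)$ pointwise. Moreover the integrand of \eqref{Mjexp} admits, uniformly for small $\epsilon$, an integrable majorant $Ce^{-c|\xi|}$: towards $+\infty$ this uses $\mu_0(r)+c_j>0$ — automatic for $j=f$ since $\mu_0(r)>-\tfrac{8}{5}$, and valid for $j=b$ exactly because we restrict to $r\in(\tfrac{2}{3},\beta)$, where $\mu_0(r)>-\tfrac{6}{5}$ so $\mu_0(r)+u_b(r)>0$ (the same observation used to make $\psi_b$ integrable when verifying $H_4$) — combined with the exponential decay of $s_j(\xi;r)$; towards $-\infty$ the matching conditions \eqref{Con1} and \eqref{Con2} force $e^{-\frac{\mu_0(r)+c_j}{D_0(r)}\xi}s_j(\xi;r)$ to decay like $e^{\sqrt{2(r+0.1)/D_0(r)}\,(q_{j,+}(r)-q_{j,-}(r))\,\xi}$, while $s_j^\epsilon$ stays bounded along the bounded orbit $\gamma_j^\epsilon$. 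Dominated convergence (cutting off the two tails and using uniform convergence on the remaining compact interval) then gives $M_j\to M_j^0$, where
\begin{equation}
M_j^0:=-\frac{1}{D_0(r)}\int_{-\infty}^{+\infty}e^{-\frac{\mu_0(r)+c_j}{D_0(r)}\xi}\,s_j(\xi;r)^2\,d\xi<0,
\end{equation}
the strict negativity being immediate since the integrand is nonpositive and not identically zero; one also recognizes $M_j^0$ as the Melnikov integral $\tfrac{\partial Q_j}{\partial\mu}(\boldsymbol\alpha_0(r);r)$, whose sign is fixed by $\widehat M_j(r)<0$ from (R\ref{melintmud}).

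Finally, the same uniform majorant shows that $\epsilon\mapsto M_j$ is continuous at $\epsilon=0$, so from $M_j^0<0$ we obtain $\epsilon_0(r)>0$ with $M_j<0$ for all $\epsilon\in(0,\epsilon_0(r))$ and $j\in\{f,b\}$, proving the theorem. I expect the only delicate point to be this limit step: for large $\xi$ the actual front/back $\gamma_j^\epsilon$ leaves the fast layer and makes the slow passage towards $X_2$ (resp. $X_1$), where $s_j^\epsilon$ is only $O(\epsilon)$ and no longer tracks $s_j(\cdot;r)$, so one has to check that the weight $e^{-\frac{\mu_0(r)+c_j}{D_0(r)}\xi}s_j(\xi;r)$ is already exponentially small there and provides a uniform integrable majorant — which is precisely where the sign condition $\mu_0(r)+c_j>0$ (hence the restriction $r\in(\tfrac{2}{3},\beta)$ for the back) and the matching identities \eqref{Con1}, \eqref{Con2} enter.
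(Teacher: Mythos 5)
Your proposal is correct and arrives at the same key computation as the paper (namely that $\partial_\mu Q_j(\boldsymbol{\alpha}_0(r);r)=-\tfrac{1}{D_0(r)}\int e^{-\frac{\mu_0(r)+c_j}{D_0(r)}\xi}s_j(\xi;r)^2\,d\xi<0$), but it reaches the bridge from this singular-limit quantity to $M_j$ by a genuinely different route. The paper never takes a limit in $\epsilon$ under the integral sign: it first observes the structural identity $B(\xi)\dot\gamma_j^\epsilon(\xi)=D_\mu F(\gamma_j^\epsilon(\xi),\mu)$, then interprets $M_j$ as the $\mu$-derivative of the signed distance between the stable and unstable manifolds measured in the direction $\psi_j(0)$, yielding the exact identity $M_j=\lVert e_j\rVert^2\,\partial_\mu Q_j(\boldsymbol{\alpha};r)$ at the actual parameter $\boldsymbol{\alpha}=(\widehat D(\epsilon;r),\widehat\mu(\epsilon;r),\epsilon)$. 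The sign for $\epsilon>0$ small is then inherited from $\partial_\mu Q_j(\boldsymbol{\alpha}_0(r);r)<0$ by the smoothness of $Q_j$ in $\boldsymbol{\alpha}$ established in (R\ref{defQfQb}) — no dominated-convergence argument is needed. Your proof instead works directly with the integral formula for $M_j$ (obtained by noting the third component of $\psi_j$ from \eqref{expressionpsifb} vanishes, rather than via the $D_\mu F$ identity), passes to the limit $\epsilon\to 0^+$ by dominated convergence, and then uses continuity in $\epsilon$. This is more elementary but shifts all the work into constructing a uniform integrable majorant, which you correctly flag as the delicate point and handle via the exponential weight plus the matching conditions \eqref{Con1}–\eqref{Con2}. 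That strategy is sound; what the paper's route buys you is that the smoothness of $Q_j$ absorbs exactly this analytic control, so that the continuity argument is a one-liner.

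Two small caveats in your write-up. First, the claimed decay rate of $e^{-\frac{\mu_0(r)+c_j}{D_0(r)}\xi}s_j(\xi;r)$ as $\xi\to-\infty$ is not quite right: combining Lemma \ref{Lemma derivatives} with \eqref{Con1} gives, for $j=f$, a net rate proportional to $q_{f,-}(r)$ (namely $\sqrt{2}\,q_{f,-}(r)\sqrt{(r+0.1)/D_0(r)}$), not to $q_{f,+}(r)-q_{f,-}(r)$; since $q_{f,-}(r)>0$ the qualitative conclusion — exponential decay towards $-\infty$ — stands, but the stated exponent is off. Second, note that your integrand pairs the singular adjoint profile $\psi_j$ with the actual $\epsilon$-dependent heteroclinic $\gamma_j^\epsilon$; the paper commits the same mild mismatch, but if one is being fully rigorous one should say explicitly that the adjoint profile along $\gamma_j^\epsilon$ converges to \eqref{expressionpsifb} as $\epsilon\to 0^+$, which is what ultimately legitimizes using the explicit singular expressions on both sides of the product.
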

\begin{proof}
    First of all, note that for any solution ${\gamma}_j^\epsilon$ of the system of differential equations \eqref{eqdiff} we have:
    \begin{equation}
        B(\xi)\dot{\gamma}_j^\epsilon(\xi)=B(\xi)\begin{pmatrix}
\dot{q}_j^\epsilon(\xi) \\
\dot{s}_j^\epsilon(\xi)  \\
\dot{u}_j^\epsilon(\xi)  
\end{pmatrix}=\begin{pmatrix}
0 \\
\frac{\dot{q}_j^\epsilon(\xi)}{D}  \\
\frac{\dot{u}_j^\epsilon(\xi)}{c-u_j^\epsilon(\xi)}  
\end{pmatrix}=\begin{pmatrix}
0\\
\frac{s_j^\epsilon(\xi)}{D}  \\
-\frac{\epsilon g(q_j^\epsilon(\xi),u_j^\epsilon(\xi))}{(c-u_j^\epsilon(\xi))^2}
\end{pmatrix}=D_{\mu}F(q_j^\epsilon,s_j^\epsilon,u_j^\epsilon,\mu)
\label{dmu}
    \end{equation}
with $\mu=-\zeta-c$ as defined in Section \ref{Section 1} and 
\begin{equation}
    F(q_j^\epsilon,s_j^\epsilon,u_j^\epsilon,\mu):=\begin{pmatrix}
        s_j^\epsilon \\ \frac{1}{D}\big((u_j^\epsilon+\mu)s_j^\epsilon-f(q_j^\epsilon,u_j^\epsilon;r)\big) \\ \frac{\epsilon g(q_j^\epsilon, u_j^\epsilon)}{u_j^\epsilon-c}
    \end{pmatrix}
\end{equation}
the right-hand side of \eqref{eqdiff}. \textcolor{black}{The last step in equation \eqref{dmu} shows a key  relation how to express the matrix-vector multiplication $B(\xi)\dot{\gamma}_j^\epsilon(\xi)$ in terms of the derivative of the vector-valued function $F$. }Inserting equation \eqref{dmu} into expression \eqref{defmel} yields: 
\begin{equation}
    \int_{- \infty}^{+ \infty}\langle \psi_{j}(\xi),B(\xi)\dot{\gamma}_j^\epsilon(\xi)\rangle d\xi = \int_{- \infty}^{+ \infty}\langle \psi_{j}(\xi),D_{\mu}F(\gamma_j^\epsilon(\xi),\mu)\rangle d\xi.
    \label{obt1}
\end{equation}
The second integral in equation \eqref{obt1} is the derivative with respect to $\mu$ of the signed distance of the unstable and stable manifolds, which is measured in the direction $\psi_j(0)$, so we have:
\begin{equation}
\begin{split}
    \int_{- \infty}^{+ \infty}\langle \psi_{j}(\xi),D_{\mu}F(\gamma_j^\epsilon(\xi),\mu)\rangle d\xi=\frac{\partial}{\partial\mu}\langle \psi_{j}(0),X_\alpha^u-X_\alpha^s \rangle 
\end{split}
\label{obt2}
\end{equation}
with $X_\alpha^u$ being the unique intersection point between the unstable manifold $W^u_\alpha(X_{k_j})$ and the plane $\Sigma_j$ (where $j\in\{f,b\}$, $k_f=1$, $k_b=2$) which is defined as the plane perpendicular to the heteroclinic front or back $X_j(\xi)$ at $\xi=0$, for $\alpha$ close to $\alpha_0$ satisfying $X^u_{\alpha_0}=\gamma_j(0)$ 
and $X_\alpha^s$ defined analogously, \textcolor{black}{cf. \textcolor{black}{(R\ref{defQfQb})} and} \cite{Kok88}, \cite{lin_1990}, \cite{Den91b} \textcolor{black}{for a deeper analysis}. The Melnikov function $Q_j(\boldsymbol{\alpha};r): \mathcal{U}\times(\frac{2}{3},+\infty)\longrightarrow \mathbb{R}$, with $\mathcal{U} \subset \mathbb{R}^3$ a small neighborhood of $\alpha_0$, is smooth and measures the signed distance between the stable and unstable manifolds $W^s_\alpha(X_{k_{j+1}})$ and $W^u_\alpha(X_{k_j})$. \textcolor{black}{Using (R\ref{defQfQb}),} $Q_j$ satisfies:
\begin{equation}
    X_\alpha^s-X_\alpha^u=Q_j(\alpha;r)e_j.
\end{equation}
Hence, with the initial condition $\psi_j(0)=-e_j=(s_j'(0),-q_j'(0),0)$ (cf. equation \eqref{initcond}) we have:
\begin{equation}
    \frac{\partial}{\partial\mu}\langle \psi_{j}(0),X_\alpha^u-X_\alpha^s \rangle=\vert \vert e_j \vert \vert^2 \frac{\partial Q_j}{\partial\mu}(\alpha;r).
    \label{vertvert}
\end{equation}
\textcolor{black}{Building the bridge  between both expressions as above, is a key step enabling us to investigate the behaviour of $M_j$ by computing the derivative with respect to $\mu$ of the Melnikov integrals $Q_j$ evaluated at $\alpha=\alpha_0(r)$ using equation \ref{Mint}}. For the heteroclinic front, this yields with $s_f$ and $s_b$ as in expressions \eqref{gammafxi} and \eqref{la}: 
\begin{equation}
\begin{split}
    \frac{\partial Q_f}{\partial\mu}(\alpha_0(r);r)&=-\frac{1}{D_0(r)}\int_{-\infty}^{+\infty}e^{-\frac{\mu_0(r)+2}{D_0(r)}\xi}\begin{pmatrix}
        -s_f'(\xi;r) \\ s_f(\xi;r) \\ 0
    \end{pmatrix}
    \cdot 
    \begin{pmatrix}
        0 \\ s_f(\xi;r) \\ 0
    \end{pmatrix}d\xi \\&=-\frac{1}{D_0(r)}\int_{-\infty}^{+\infty}e^{-\frac{\mu_0(r)+2}{D_0(r)}\xi}s_f(\xi;r)^2 d\xi<0,
\end{split}
\end{equation}
which is negative, since $s_f$ is not identically equal to $0$ (we even have $s_f(\xi;r)>0$ for all $\xi \in \mathbb{R}$, \textcolor{black}{cf. Lemma \ref{Lemma derivatives} point 1.)}
Analogously, we obtain for the heteroclinic back: \\
   \begin{equation}
   \begin{split}
           \frac{\partial Q_b}{\partial\mu}(\alpha_0(r);r)&= -\frac{1}{D_0(r)}\int_{-\infty}^{+\infty}e^{-\frac{\mu_0(r)+u_b(r)}{D_0(r)}\xi}\begin{pmatrix}
        -s_b'(\xi;r) \\ s_b(\xi;r) \\ 0
    \end{pmatrix}
    \cdot 
    \begin{pmatrix}
        0 \\ s_b(\xi;r) \\ 0
    \end{pmatrix}d\xi \\&=-\frac{1}{D_0(r)}\int_{-\infty}^{+\infty}e^{-\frac{\mu_0(r)+u_b(r)}{D_0(r)}\xi}s_b(\xi;r)^2 d\xi<0,
    \end{split}
    \end{equation}
    which is also negative, since $s_b$ is not identically equal to $0$ (we even have $s_b(\xi;r)<0$ for all $\xi \in \mathbb{R}$ \textcolor{black}{cf. Lemma \ref{Lemma derivatives} point 3.}
Hence, 
\begin{equation}
    \frac{\partial Q_j}{\partial\mu}(\alpha_0(r);r)<0
    \label{Qjalpha0}
\end{equation}
    for $j\in\{f,b\}$. We know that the Melnikov functions $Q_j(\cdot;r)$ are smooth \textcolor{black}{(cf. (R\ref{defQfQb}))}, hence we can extend result \eqref{Qjalpha0} to parameter values $\alpha=(D,\mu,\epsilon)$ belonging to a small neighborhood of $\alpha_0$ yielding
\begin{equation}
    \frac{\partial Q_j}{\partial\mu}(\alpha(r);r)<0
    \label{Qjalpha}
\end{equation}
for $\epsilon>0$ sufficiently small and $j\in\{f,b\}$. Combinining inequality \eqref{Qjalpha} with the chain of equalities \eqref{defmel}, \eqref{obt1}, \eqref{obt2} and \eqref{vertvert}, we conclude for the Melnikov integrals:
\begin{equation}
    M_j<0
\end{equation}
for $\epsilon>0$ sufficiently small.
\end{proof}
In particular $M_j$ with $j\in\{f,b\}$ are non-zero for $\epsilon>0$ taken sufficiently small, which shows that Hypothesis $H_7$ is satisfied. 
\subsection{Stability theorem for the \texorpdfstring{$N$}{Lg}-front and \texorpdfstring{$N$-back solutions}{Lg}}
As shown in Sections \ref{Section2.1} to \ref{last section} and \ref{H7}, Hypotheses $H_0$ to $H_7$ are satisfied for the Barkley model \eqref{primary} under conditions 1. to 3. stated at the beginning of Section \ref{proofbark}. Hence we can apply Theorem 2 in \cite{Sandstede98}, which establishes stability of the $N$-front and $N$-back solutions. 
\begin{theorem}
There exists $\delta>0$, which does not depend on $N$, such that, for each $N>1$ and $\rho_0>0$ sufficiently small, equation \eqref{eq251} has exactly $2N+1$ solutions $(\lambda_i,Y_i) \in \mathbb{C} \times C^1(\mathbb{R},\mathbb{R}^3)$ with $\vert \lambda \vert < \delta$, hence $\lambda \in U_\delta(0)$. The eigenvalues are counted with multiplicity and satisfy:
\begin{equation}
        \lambda_i = \left\{
\begin{array}{ll}
(a_{2k-1}+o(1))\rho & \, \textrm{for \,\,$i=2k-1$} \\
(a_{2k}+o(1))\rho^{{\beta_2^\epsilon}+\eta_k}  & \, \textrm{for \,\,$i=2k$} \\
0  & \, \textrm{for \,\,$i=2N+1$}
\end{array}
\right.
    \end{equation}
    as $\rho \longrightarrow 0^+$, where $k\in\{1,2,...,N\}$ and $\eta_k$ as in Theorem \ref{First Theorem}. $\beta_2^\epsilon$ is defined in \eqref{beta2eps}. $a_{2k-1}$ and $a_{2k}$ are non-vanishing constants, which sign is related to the sign of the Melnikov integrals $M_j$ from \eqref{defmel} as follows:
    \begin{equation}
         \text{sign}(a_i) = \left\{
\begin{array}{ll}
\text{sign} (M_f) & \, \textrm{for \,\,$i=2k$} \\
\text{sign} (M_b) & \, \textrm{for \,\,$i=2k-1$}
\end{array}.
\right.
    \end{equation}
    From Theorem \ref{Melnikovtheorem} we know that $M_f<0$ and $M_b<0$ for $\epsilon>0$ sufficiently small, hence the eigenvalues $\lambda_i$ are all located in the left half plane. These results hold for $N$-fronts and $N$-backs.
\end{theorem}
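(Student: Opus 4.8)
The plan is to invoke Theorem~2 of Sandstede~\cite{Sandstede98} as a black box, all of its hypotheses having been established in the preceding sections. First I would recall that Sections~\ref{Section2.1}--\ref{last section} verify Hypotheses $H_0$ through $H_6$ for the traveling-wave ODE~\eqref{eqdiff} in the regime $r\in(\frac{2}{3},\beta)$, $\epsilon>0$ sufficiently small, and $c<u_b(r)$, while Section~\ref{H7} verifies $H_7$: Theorem~\ref{Melnikovtheorem} gives $M_f<0$ and $M_b<0$, hence $M_j\neq 0$ for $j\in\{f,b\}$. I would also stress that the generalized eigenvalue problem~\eqref{eq251} is written in exactly the form required by \cite{Sandstede98}, with the coefficient matrix $B(\xi)$ chosen so that $B(\xi)\dot\gamma_j^\epsilon(\xi)=D_\mu F(\gamma_j^\epsilon(\xi),\mu)$, so that the spectral parameter $\lambda$ enters through the same $\lambda B(\xi)$ perturbation as in Sandstede's setting; thus all structural assumptions of that framework are met for the $N$-front and $N$-back solutions $\gamma_{f,N}(\rho)$ from Theorem~\ref{First Theorem}.

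Second, I would apply Theorem~2 of \cite{Sandstede98} verbatim. Under $H_0$--$H_7$ it produces, for each such $N$-front (and analogously $N$-back) living on the curve $\omega_N(\rho)$, exactly $2N+1$ eigenvalues $\lambda_i$ of~\eqref{eq251} in a disc $U_\delta(0)$, counted with multiplicity, with the stated asymptotics as $\rho\to 0^+$: $\lambda_{2k-1}=(a_{2k-1}+o(1))\rho$, $\lambda_{2k}=(a_{2k}+o(1))\rho^{\beta_2^\epsilon+\eta_k}$ for $k\in\{1,\dots,N\}$, and $\lambda_{2N+1}=0$, this last eigenvalue being the simple translational eigenvalue forced by autonomy of~\eqref{eqdiff} (the bounded solution $\dot\gamma_{f,N}$ at $\lambda=0$). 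The crucial output of \cite{Sandstede98} is that the leading coefficients $a_{2k-1},a_{2k}$ are nonzero and their signs are dictated by the Melnikov integrals, $\mathrm{sign}(a_{2k})=\mathrm{sign}(M_f)$ and $\mathrm{sign}(a_{2k-1})=\mathrm{sign}(M_b)$. I would also note that $\delta$ can be chosen independently of $N$: Sandstede's construction reduces the eigenvalue problem to a bifurcation equation posed on a fixed neighborhood of the singular heteroclinic loop, with estimates uniform in the number of times the loop is traversed, and $\delta$ is extracted from that fixed neighborhood.

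Third, I would feed in the sign information. Since $M_f<0$ and $M_b<0$ (Theorem~\ref{Melnikovtheorem}), we get $a_{2k-1}<0$ and $a_{2k}<0$ for all $k$; combined with $\rho>0$ and $\rho^{\beta_2^\epsilon+\eta_k}>0$ (recall $\beta_2^\epsilon>1$ and $\eta_k>0$ from Theorem~\ref{First Theorem}), the $o(1)$ terms are absorbed once $\rho_0$ is small enough and $\mathrm{Re}\,\lambda_i<0$ for $i=1,\dots,2N$. Hence the only part of the spectrum of~\eqref{eq251} near the origin apart from the trivial eigenvalue $\lambda_{2N+1}=0$ lies strictly in the open left half-plane, which is the asserted spectral stability of the $N$-fronts and $N$-backs for each $N>1$.

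At this stage the genuine analytic difficulties have already been dispatched (principally in Sections~\ref{H3} and~\ref{H7}, via the strong $\lambda$-lemma, the careful front/back asymptotics, and the non-vanishing of the Melnikov-type integrals), so the remaining obstacle is purely one of bookkeeping: one must confirm that each hypothesis label $H_0$--$H_7$ as verified above matches, line by line, the corresponding hypothesis in \cite{Sandstede98}---in particular the dimension count $\dim W^s(X_k)=2$, $\dim W^u(X_k)=1$, the ordering $\lambda_1^\epsilon<\lambda_2^\epsilon<0<\lambda_3^\epsilon$ with $-\lambda_2^\epsilon<\lambda_3^\epsilon$ and simplicity, the double twist, the strong inclination property, the linear independence of $\nabla Q_f$ and $\nabla Q_b$, and the non-zeroness of the $H_7$-Melnikov integrals---so that they enter Sandstede's proof exactly as his statement requires. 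Once this matching is confirmed, Theorem~2 of \cite{Sandstede98} applies with no further computation and yields the statement.
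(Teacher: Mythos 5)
Your proposal is correct and follows exactly the paper's approach: the paper's entire argument for this theorem is the preceding paragraph, which states that Sections~\ref{Section2.1}--\ref{last section} and~\ref{H7} verify $H_0$--$H_7$ and then invokes Theorem~2 of \cite{Sandstede98} as a black box. You flesh out the bookkeeping a bit more than the paper does (the translational eigenvalue, the $N$-independence of $\delta$, absorbing the $o(1)$ terms), but the underlying argument is identical.
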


\textcolor{black}{The last result shows the spectral (or linear) stability of the $N$-fronts and $N$-backs. If one wanted to also prove (local) asymptotic orbital stability, one would also have to check hypotheses to ensure that the linearized spectral stability extends~\cite{KapitulaPromislow} to the nonlinear setting. If the linearization of the problem gives a semigroup generated by a sectorial operator, then one gets nonlinear stability automatically from linear stability~\cite{Henry}. In the context of the Barkley model, the $u$-component describing the centerline velocity is the problematic component for nonlinear stability. Indeed, for the standard FitzHugh-Nagumo without diffusion in the gating variable component, it is known (cf. e.g.~\cite{ArioliKoch}) that also nonlinear stability holds. However, the Barkley model we studied here contains a leading derivative term $-uu_x$ making this previous analysis not directly applicable. Hence, we pose this as an interesting future problem to study nonlinear stability in more detail.}

\section{Conclusion}
Starting from the Barkley model \eqref{primary} for pipe flow \textcolor{black}{which was extensively verified in experiments}, we obtained the system of ODEs \eqref{eqdiff} as we were investigating traveling wave solutions. Such waves exhibited by the model show different profile types, $N$-front solutions consist of the concatenation of $2N+1$ copies of a simple front and back. In terms of orbits, this means that a $N$-front solution crosses the Poincaré section as defined in Section \ref{PS} $N$ times. After having verified the existence of two hyperbolic steady states and spectral properties of the linearized vector field at these points, we used that two heteroclinic orbits exist between them. These results hold for all $r>\frac{2}{3}$. For wider discussion and explicit computation of the equilibria dependency on the model Reynolds number $r$ please refer to \cite{Master_s_Thesis_Pascal_Sedlmeier} Section 4. The non-degeneracy condition of the heteroclinic loop forces us to restrict the applicable domain, since only for $r\in(\frac{2}{3},\beta)$ with $\beta\approx 0.72946$ the Melnikov integral $\widetilde{M_f}(r)$ as defined in Section \ref{H3} is positive. After checking the linear independence of the Melnikov integrals $\nabla Q_f$ and $\nabla Q_b$, the strong inclination property and the positivity of the scalar products ${\langle w_j^\pm,v_j^\pm \rangle}_{j \in \{f,b\}}$ for $\epsilon>0$ sufficiently small, we were able to conclude the existence of $N$-front and $N$-back solutions for each $N>1$. The existence analysis based upon abstract conditions from~\cite{Sandstede98} is basically a re-interpretation, augmented by some additional technical steps, based upon the extensive analysis via geometric singular perturbation theory and Melnikov calculations in~\cite{EKR2022}. In addition, it is a key observation of this work that using a more abstract framework allowed us to also directly check just one more hypothesis from~\cite{Sandstede98}, Hypothesis $H_7$, regarding the Melnikov integrals ${M_j}$ for $j \in \{f,b\}$ as defined in Section \ref{H7}. Although checking $H_7$ is non-trivial, it can be verified to conclude directly together with Hypotheses $H_0$ to $H_6$  the (local linearized asymptotic) stability for the $N$-front and $N$-back solutions. This conclusion is crucial because it explains, why quite a wide variety of puff/slug patterns are observed in experiments in the transition to turbulence regime.\\

\akn{CK and PS would like to thank Paul Carter, Maximilian Engel and Björn de Rijk for fruitful discussions and valuable interaction. CK would like to thank the VolkswagenStiftung for support via a Lichtenberg Professorship. CK also would like to thank the DFG for support via a Sachbeihilfe grant project number 444753754. \textcolor{black}{We also thank a referee for the very helpful comments that led to many improvements in the manuscript.}}

\begin{appendices}
\section{\textcolor{black}{Proof of \texorpdfstring{$\langle w_f^-, v_f^- \rangle>0$}{Lg} for \texorpdfstring{$\epsilon>0$}{Lg} sufficiently small}}
\label{Appendice w_f^-, v_f^-}

Analogously to the work done in Section \ref{last section} \ref{wbminusvbminus}, we first show the result for $\epsilon=0$ (corresponding to the fast subsystem \eqref{fastss}) and then extend it to $\epsilon>0$ sufficiently small. First of all, for  $\epsilon=0$ we have
\textcolor{black}{\begin{equation}    
    \begin{split}
         \langle w_f^-, v_f^- \rangle &=\langle \lim_{\xi \longrightarrow +\infty}e^{\lambda_3(X_1)\xi}\psi_b(\xi) , \lim_{\xi \longrightarrow +\infty}e^{\lambda_3(X_1)\xi}\dot{X}_f(-\xi) \rangle.
    \end{split}
\end{equation}}
We use the linearity and the continuity of the scalar product, which allows us to write:
\begin{equation}
     \begin{split}
     \langle w_f^-, v_f^- \rangle &= \lim_{\xi \longrightarrow +\infty}e^{2\lambda_3(X_1)\xi} \langle \psi_b(\xi) , \dot{X}_f(-\xi) \rangle.
     \end{split}
\end{equation}
Using the definition of the heteroclinic connection $X_f$ and expression \eqref{expressionpsifb} for the solution $\psi_b$ of the adjoint variational equation \eqref{adjvareq} with initial condition \eqref{initcond}, we obtain:
     \begin{equation}
     \begin{split}
     \langle w_f^-, v_f^- \rangle 
      &=\lim_{\xi \longrightarrow +\infty}e^{\big(2\lambda_3(X_1)-\frac{\mu_0(r)+u_b(r)}{D_0(r)}\big)\xi} \big(\dot{s}_b(\xi) s_f(-\xi) - s_b(\xi) \dot{s}_f(-\xi) \big)
     \end{split}
     \label{li}
     \end{equation}
with $s_f$, $s_b$ and $u_f$ as defined in equations \eqref{gammafxi} and \eqref{la}.\textcolor{black}{We define \begin{equation}
    d^0_{f^-}(\xi):=\dot{s}_b(\xi) s_f(-\xi) - s_b(\xi) \dot{s}_f(-\xi)
\end{equation}
to be the second factor in above equation. By contradiction, assume that \begin{equation}
    \langle w_f^-, v_f^- \rangle =:a_f^- < 0.
    \label{assumf-}
\end{equation} Again, as for point \ref{wbminusvbminus}, the scalar products do not vanish according to hypotheses $H_1$ and $H_6$. Following the reasoning in \ref{wbminusvbminus}, we show analogously, as the exponential prefactor in \ref{li} is positive, the existence of a $\xi_{0,f^-} \in \mathbb{R}$, such that:  \begin{equation}
     \forall\,\,\xi>\xi_{0,f^-}: d^0_{f^-}(\xi) < 0.
    \label{xi0f-}
\end{equation}} 
\textcolor{black}{Expressions 1. to 4. in Lemma \ref{Lemma derivatives}} allow us to explicitly compute the following limits as $\xi \longrightarrow +\infty$:
\begin{equation}
    \lim_{\xi \longrightarrow +\infty}s_f(-\xi)=0^+,\,\,\,\,\,\,\,\lim_{\xi \longrightarrow +\infty}\Dot{s}_f(-\xi)=0^+,\,\,\,\,\,\,\,\lim_{\xi \longrightarrow +\infty}s_b(\xi)=0^-,\,\,\,\,\,\,\,\lim_{\xi \longrightarrow +\infty}\Dot{s}_b(\xi)=0^+.
    \label{limits194}
\end{equation}

Hence we conclude:
\begin{equation}
 \lim_{\xi \longrightarrow +\infty} \textcolor{black}{d^0_{f^-}(\xi)}=0^+.
 \label{lim100}
\end{equation}

\textcolor{black}{Consequently, there exists $\xi_{1,f^-} \in \mathbb{R}$ such that for all $\xi>\xi_{1,f^-}$: 
 \begin{equation}
     d^0_{f^-}(\xi)\geq0
 \end{equation}
 Now for $\xi>\max \{\xi_{0,f^-},\xi_{1,f^-}\}$ we have as well $d^0_{f^-}(\xi)<0$ as $d^0_{f^-}(\xi)\geq0$, which is a contradiction. Hence, our assumption \eqref{assumf-} was false and as a result for $\epsilon=0$:
 \begin{equation}
     \langle w_f^-, v_f^- \rangle>0.
\label{resultwf-}
 \end{equation}}
Following the same logic as in point \ref{wbminusvbminus}, we generalize  result \eqref{resultwf-} to $\epsilon>0$ sufficiently small. Hence, let $\gamma^\epsilon_f$ be a heteroclinic front solution of the Barkley model \eqref{eqdiff} with parameter $\epsilon$ taken sufficiently small, connecting the equilibria $X_1$ to $X_2$. We define the components of $\gamma^\epsilon_f$ in the $(q,s,u)$-frame as:
\begin{equation}
    \gamma^\epsilon_f(\xi)=:
\begin{pmatrix}
q_{f}^\epsilon(\xi)  \\
s_{f}^\epsilon(\xi) \\
u_{f}^\epsilon(\xi)
\end{pmatrix}.
\end{equation}
We need to determine, similarly to the reasoning for $\epsilon=0$, the limit of
\begin{equation}
 \textcolor{black}{
d^\epsilon_{f^-}(\xi):=\dot{s}_b(\xi)s_f^\epsilon(-\xi)-s_b(\xi)\dot{s}_f^\epsilon(-\xi)}
\end{equation}
\textcolor{black}{as $\xi$ tends to $+\infty$.}
The exponential prefactor
\begin{equation}
    e^{\big(2\lambda_3(X_1)-\frac{\mu_0(r)+u_b(r)}{D_0(r)}\big)\xi}
\end{equation}
remains unchanged (cf. equation \eqref{li}). 
Using \textcolor{black}{Lemma \ref{Lemma derivatives} point 1.} we obtain for the $s$-component of the singular heteroclinic front:
\begin{equation}
    \forall\,\,\xi \in \mathbb{R}: s_f(\xi)>0.
\end{equation}
For $\epsilon>0$ sufficiently small, $\gamma_f^\epsilon$ lies in the vicinity of the singular heteroclinic front, which implies:
\begin{equation}
    \exists\,\,\xi_0 \in \mathbb{R}\,\,\,\, \forall\,\,\xi < \xi_0: s_f^\epsilon(\xi)>0.
    \label{ineq20}
\end{equation}
And since 
\begin{equation}
    \lim_{\xi \longrightarrow -\infty}\gamma_f^\epsilon(\xi)=X_1=(0,0,2),
    \label{hequ1}
\end{equation}
we immediately have
\begin{equation}
    \lim_{\xi \longrightarrow +\infty}s_f^\epsilon(-\xi)=0.
\end{equation}
Together with inequality 
\eqref{ineq20} this means:
\begin{equation}
    \lim_{\xi \longrightarrow +\infty}s_f^\epsilon(-\xi)=0^+
    \label{sflimeps}
\end{equation}
for $\epsilon>0$ sufficiently small. \\ 
\newline 
The derivative of $s_f^\epsilon (-\xi)$ satisfies the differential equation (cf. the dynamical system \eqref{eqdiff}):
\begin{equation}
    \Dot{s}_f^\epsilon=\frac{1}{D}\Big((\underbrace{u_f^\epsilon+\mu}_{A})\underbrace{s_f^\epsilon}_{B}-\underbrace{q_f^\epsilon}_{C}\big(\underbrace{r+u_f^\epsilon-2-(r+0.1)(q_f^\epsilon-1)^2}_{D}\big)\Big).
    \label{ABCD2}
\end{equation}
We investigate the behaviour of $\Dot{s}_f^\epsilon(-\xi)$ as $\xi \longrightarrow + \infty$ by computing the following limits for the different terms from equation \eqref{ABCD2}:\\
\par
\begingroup
\leftskip=1cm 
\noindent 
\underline{Term $A$:}
\begin{equation}
\lim_{\xi \longrightarrow +\infty}u_f^\epsilon(-\xi;r)+\mu=2-\zeta-c>0
\end{equation}
for $\zeta>0$ sufficiently small, since $c<\frac{4}{3}$ for $r>\frac{2}{3}$ (cf. beginning of Section \ref{proofbark}, hypotheses). \\
\par
\endgroup
\par
\begingroup
\leftskip=1cm 
\noindent 
\underline{Term $B$:}
\begin{equation}
    \lim_{\xi \longrightarrow +\infty}s_f^\epsilon(-\xi)
\end{equation} is given by equation \eqref{sflimeps}.\\
\par
\endgroup
\par
\begingroup
\leftskip=1cm 
\noindent 
\underline{Term $C$:} 
\begin{equation}
\lim_{\xi \longrightarrow +\infty}q_f^\epsilon(-\xi;r)=0^+,
\end{equation}
since 
\begin{equation}
\lim_{\xi \longrightarrow +\infty}q_f^\epsilon(-\xi;r)=0
\end{equation}
(cf. equation \eqref{hequ1}) and
\begin{equation}
    \exists\,\,\xi_0 \in \mathbb{R}\,\,\,\,\forall\,\,\xi < \xi_0: q_f^\epsilon(\xi;r)>0.
    \label{ineq30}
\end{equation}
Inequality \eqref{ineq30} follows from
\begin{equation}
      q_f(\xi;r)>0
    \label{ineq40}
\end{equation}
for all $\xi \in \mathbb{R}$ \textcolor{black}{(cf. (R(\ref{S3.5.1}))} and the fact that $\gamma_f^\epsilon$ lies in the vicinity of the singular heteroclinic front for $\epsilon>0$ sufficiently small. \\
\par
\endgroup
\par
\begingroup
\leftskip=1cm 
\noindent 
\underline{Term $D$:} The parabola branch of the manifold $M_0$ intersects the plane $q=0=q^*$ at $u=u^*$ such that:
\begin{equation}
    r+u^*-2-(r+0.1)(q^*-1)^2=0,
\end{equation}
which yields
\begin{equation}  
u^*=2.1.
\end{equation}
Since
\begin{equation}
    \lim_{\xi \longrightarrow +\infty}\gamma_f^\epsilon(-\xi;r)=X_1=(0,0,2),
\end{equation}
the $u$-component of $X_1$ is smaller than $u^*$:
\begin{equation}
    2<u^*,
\end{equation}
which means that $\gamma_f^\epsilon$ is located "underneath" the parabola of the manifold $M_0$ in the $(q,u)$-plane in a neighborhood of $-\infty$ (cf. Figure \ref{fig: Singloop}). Hence 
\begin{equation}
    u_f^\epsilon<2-r+(r+0.1)(q_f^\epsilon-1)^2
\end{equation}
meaning that the quantity 
\begin{equation}
r+u_f^\epsilon-2-(r+0.1)(q_f^\epsilon-1)^2<0
\end{equation} is negative in such a neighborhood.\\

Hence we have with expression \eqref{ABCD2} for $\Dot{s}_f^\epsilon$:
\begin{equation}
    \lim_{\xi \longrightarrow +\infty}\Dot{s}_f^\epsilon(-\xi)=0^+
    \label{eq263}
\end{equation}
for $\epsilon>0$ sufficiently small. \\
\par
\endgroup
With expressions \eqref{limits194}, \eqref{sflimeps} and \eqref{eq263} we are now able to compute the limit:
\begin{equation}
    \lim_{\xi \longrightarrow +\infty} \textcolor{black}{d^\epsilon_{f^-}(\xi)}=0^+.
\end{equation}
Analogously to the first case $\epsilon=0$, we then conclude for $\epsilon>0$ sufficiently small:
\begin{equation}
\langle w_f^-, v_f^- \rangle>0.
\end{equation}

\section{\textcolor{black}{Proof of \texorpdfstring{$\langle w_f^+,v_f^+ \rangle>0$}{Lg} for \texorpdfstring{$\epsilon>0$}{Lg} sufficiently small}}
\label{Appendice w_f^+,v_f^+}
Using the definitions \eqref{firstlimit} and \eqref{defwk} of $v_f^+$ and $w_f^+$, we can write for the scalar product  
     \textcolor{black}{\begin{equation}
     \begin{split}
         \langle w_f^+, v_f^+ \rangle  &=\langle \lim_{\xi \longrightarrow +\infty}e^{-\lambda_2(X_1)\xi}\psi_f(-\xi) , \lim_{\xi \longrightarrow +\infty}e^{-\lambda_2(X_1)\xi}\dot{\gamma}_b^\epsilon(\xi) \rangle.
    \end{split}
     \end{equation}}
     With the linearity and the continuity of the scalar product, we obtain:
      \begin{equation}
     \begin{split}
     \langle w_f^+, v_f^+ \rangle  &=\lim_{\xi \longrightarrow +\infty}e^{-2\lambda_2(X_1)\xi} \langle \psi_f(-\xi) , \dot{\gamma}_b^\epsilon(\xi) \rangle.
     \end{split}
     \end{equation}
     Using the definition of the orbit $\gamma_b^\epsilon$ and expression \eqref{expressionpsifb} for the solution $\psi_f$ of the adjoint variational equation \eqref{adjvareq} with initial condition \eqref{initcond}, we obtain
    \begin{equation}
     \begin{split}
     \langle w_f^+, v_f^+ \rangle
      &=\lim_{\xi \longrightarrow +\infty}e^{\big(-2\lambda_2(X_1)+\frac{\mu_0(r)+2}{D_0(r)}\big)\xi} \big( \dot{s}_f(-\xi) s_b^\epsilon(\xi)-s_f(-\xi) \dot{s}_b^\epsilon(\xi) \big).
     \end{split}
     \label{exp10}
     \end{equation}
    \textcolor{black}{For simplicity of notation, we define: \begin{equation}
        d^\epsilon_{f^+}(\xi):=\dot{s}_f(-\xi) s_b^\epsilon(\xi)-s_f(-\xi) \dot{s}_b^\epsilon(\xi)
    \end{equation}} 
     Now we investigate the behaviour of $s_b^\epsilon(\xi)$ and $\Dot{s}_b^\epsilon(\xi)$ as $\xi \longrightarrow + \infty$ of the actual heteroclinic back for $\epsilon>0$ sufficiently small. Similarly to the work done in \ref{wbplusvbplus} for the heteroclinic front, we have that $W^s_{\boldsymbol{\alpha}(\epsilon;r)}(K_{1,\boldsymbol{\alpha}(\epsilon;r)})$ and $W^u_{\boldsymbol{\alpha}(\epsilon;r)}(K_{2,\boldsymbol{\alpha}(\epsilon;r)})$ intersect transversally along $\gamma_b^\epsilon$(cf. \textcolor{black}{(R\ref{L3.7})}). As the heteroclinic loop is non-degenerate, $\gamma^\epsilon_b(\xi)$ is asymptotically tangent for $\xi \longrightarrow +\infty$ to the principal stable eigenvector $e_2(X_1)$ of the steady state $X_1=(0,0,2)$, cf. Section \ref{H3}. The (two-dimensional) stable manifold $W_{\boldsymbol{\alpha}}^s(K_{1,\boldsymbol{\alpha}})$ of the invariant manifold $K_{1,\boldsymbol{\alpha}}$ consists of all orbits (locally) converging to $K_{1,\boldsymbol{\alpha}}$ as $\xi \longrightarrow + \infty$, whereas the (one-dimensional) unstable manifold $W_{\boldsymbol{\alpha}}^u(K_{2,\boldsymbol{\alpha}})$ of the invariant manifold $K_{2,\boldsymbol{\alpha}}$ of all orbits (locally) converging to $K_{2,\boldsymbol{\alpha}}$ as $\xi \longrightarrow - \infty$.  \textit{Geometric singular perturbation theory} yields that  $W_{\boldsymbol{\alpha}}^s(K_{1,\boldsymbol{\alpha}})$ coincides for $\epsilon>0$ with the stable manifold $W_{\boldsymbol{\alpha}}^s(X_1)$ of the steady state $X_1$ in system \eqref{eqdiff} (cf. \textcolor{black}{(R\ref{defQfQb})}). The unstable manifold $W^u_{\boldsymbol{\alpha}}(K_{2,\boldsymbol{\alpha}})$ of the invariant manifold $K_{2,\boldsymbol{\alpha}}$ and the stable manifold $W^s_{\boldsymbol{\alpha}}(K_{1,\boldsymbol{\alpha}})$ of the invariant manifold $K_{1,\boldsymbol{\alpha}}$ depend smoothly on $\boldsymbol{\alpha}$ for $\boldsymbol{\alpha}$ close to $\boldsymbol{\alpha_0}$. This prevents any oscillatory behaviour of the actual heteroclinic back for $\epsilon>0$ along the singular one, which arises at $\boldsymbol{\alpha}=\boldsymbol{\alpha}_0$.\\ 

The orbit of $\gamma_b^\epsilon$ approches the singular heteroclinic back as $\epsilon \longrightarrow 0^+$, which is made up of the heteroclinic $X_b$ in the fast subsystem \eqref{fastss} in the layer $u=2$ and the slow orbit segment in the slow subsystem \eqref{slowsubsys}, which connects $Y_2$ to $X_1$. Since 
\begin{equation}
    \lim_{\xi \longrightarrow + \infty}X_b(\xi)=Y_2=(0,0,u_b(r)),
\end{equation}
we have:
\begin{equation}
    \lim_{\xi \longrightarrow + \infty}s_b(\xi)=0
\end{equation} 
(cf. also \eqref{limits194}), hence for $\epsilon>0$ sufficiently small, $s_b^\epsilon$ will be arbitrarily close to 0 for $X_b$ in the vicinity of $Y_2$. Using now inequality \eqref{sbinf}, we know that from $X_2$ to $X_1$, $s_b^\epsilon$ is firstly negative for $\epsilon>0$ sufficiently small and hence positive after the zero-crossing in the vicinity of $Y_2$, as $\gamma_b^\epsilon$ cannot show an oscillatory behaviour. Hence 
\begin{equation}
    \exists\,\,\xi_0 \in \mathbb{R}\,\,\,\,\forall\,\,\xi > \xi_0: s_b^\epsilon(\xi)>0
    \label{ineq123}.
\end{equation}
From 
\begin{equation}
    \lim_{\xi \longrightarrow + \infty}\gamma_b^\epsilon(\xi)=X_1=(0,0,2)
\end{equation} 
we have immediately that
\begin{equation}
    \lim_{\xi \longrightarrow + \infty}s_b^\epsilon(\xi)=0,
    \label{limimexico}
\end{equation} which means together with inequality \eqref{ineq123}:
\begin{equation}
    \lim_{\xi \longrightarrow + \infty}s_b^\epsilon(\xi)=0^+.
    \label{limimexico+}
\end{equation} 
Now we compute the limit of the derivative $\Dot{s}_b^\epsilon(\xi)$ as $\xi \longrightarrow +\infty$  for $\epsilon>0$ sufficiently small. From the dynamical system \eqref{eqdiff}, we know that the derivative satisfies
        \begin{equation}
    \Dot{s}_b^\epsilon(\xi)=\frac{1}{D}\Big((u_b^\epsilon(\xi)+\mu)s_b^\epsilon(\xi)-f(q_b^\epsilon(\xi),u_b^\epsilon(\xi);r)\Big).
        \end{equation}
        Using the continuity of the function $f$, we can write:
        \begin{equation}
    \lim_{\xi \longrightarrow +\infty}f(q_b^\epsilon(\xi),u_b^\epsilon(\xi);r)=f(0,2;r)=0,
    \label{f00}
        \end{equation}
        since $X_1$ lies on the critical manifold $M_0$. As a result, the limits \eqref{limimexico} and \eqref{f00} yield:
        \begin{equation}
            \lim_{\xi \longrightarrow +\infty}\Dot{s}_b^\epsilon(\xi)=0.
        \end{equation}
As equation \eqref{limimexico+} holds and $s_b^\epsilon$ cannot show an oscillatory behaviour, we have
\begin{equation}
    \exists\,\,\xi_0 \in \mathbb{R}\,\,\,\,\forall\,\,\xi > \xi_0: \Dot{s}_b^\epsilon(\xi)<0.
    \label{ineq20000}
\end{equation}
Hence 
\begin{equation}
            \lim_{\xi \longrightarrow +\infty}\Dot{s}_b^\epsilon(\xi)=0^-.
            \label{lim40000}
        \end{equation}
 From the explicit expressions for $s_f$ and $\Dot{s}_f$ (cf. \textcolor{black}{Lemma \ref{Lemma derivatives}}), we compute the following limits as $\xi \longrightarrow -\infty$:
\begin{equation}
    \lim_{\xi \longrightarrow -\infty} s_f(\xi)=0^+,\,\,\,\lim_{\xi \longrightarrow -\infty} \Dot{s}_f(\xi)=0^+.
\label{limu}
\end{equation}

 With the limits \eqref{limimexico+}, \eqref{lim40000} and \eqref{limu} we obtain:
\textcolor{black}{\begin{equation}
    \lim_{\xi \longrightarrow +\infty}  d^\epsilon_{f^+}(\xi)=0^+.
    \label{lim0000}
\end{equation}
Hence, there exists $\xi_{0,f^+} \in \mathbb{R}$ such that for all $\xi>\xi_{0,f^+}$: 
 \begin{equation}
     d^\epsilon_{f^+}(\xi)\geq 0
 \end{equation}}

\textcolor{black}{By contradiction, assume that \begin{equation}
        \langle w_f^+, v_f^+ \rangle=:a_f^+ < 0
        \label{assumf+}
    \end{equation} Again, the scalar products do not vanish according to hypotheses $H_1$ and $H_6$. Similarly to the computation done in Section \ref{last section} \ref{wbplusvbplus}, there exists $\xi_{1,f^+} \in \mathbb{R}$ such that: \begin{equation}
    \forall\,\,\xi>\xi_{1,f^+}: d^\epsilon_{f^+}(\xi) < 0.
    \label{xi0f+}
\end{equation}}
\textcolor{black}{
 Now for $\xi>\max \{\xi_{0,f^+},\xi_{1,f^+}\}$ we have as well $d^\epsilon_{f^+}(\xi)<0$ as $d^\epsilon_{f^+}(\xi)\geq 0$, which is a contradiction. Hence, our assumption \eqref{assumf+} was false and as a consequence for $\epsilon>0$ sufficiently small:
 \begin{equation}
     \langle w_f^+, v_f^+ \rangle>0.
\label{resultwf+}
 \end{equation}} 
\end{appendices}

\end{document}